\documentclass[11pt]{amsart}

\usepackage[T1]{fontenc}
\IfFileExists{lmodern.sty}{%
  \usepackage{lmodern}%
  \IfFileExists{microtype.sty}{\usepackage{microtype}}{}%
}{%
  \IfFileExists{microtype.sty}{\usepackage[expansion=false]{microtype}}{}%
}
\usepackage{amsmath,amssymb,amsthm,mathtools,mathrsfs}
\usepackage{enumitem}
\usepackage{aliascnt}
\usepackage{tikz}
\usetikzlibrary{arrows.meta,calc}
\usepackage{booktabs}
\usepackage{caption}
\usepackage{hyperref}
\usepackage[nameinlink,noabbrev]{cleveref}

\hypersetup{
  colorlinks=true,
  linkcolor=blue,
  citecolor=blue,
  urlcolor=blue,
  pdftitle={Thick knots and ropelength-filtered knot spaces},
  pdfauthor={Makoto Ozawa}
}

\numberwithin{equation}{section}
\theoremstyle{plain}
\newtheorem{theorem}{Theorem}[section]

\newaliascnt{proposition}{theorem}
\newtheorem{proposition}[proposition]{Proposition}
\aliascntresetthe{proposition}

\newaliascnt{corollary}{theorem}
\newtheorem{corollary}[corollary]{Corollary}
\aliascntresetthe{corollary}

\newaliascnt{lemma}{theorem}
\newtheorem{lemma}[lemma]{Lemma}
\aliascntresetthe{lemma}

\newaliascnt{conjecture}{theorem}
\newtheorem{conjecture}[conjecture]{Conjecture}
\aliascntresetthe{conjecture}

\theoremstyle{definition}
\newaliascnt{definition}{theorem}
\newtheorem{definition}[definition]{Definition}
\aliascntresetthe{definition}

\newaliascnt{example}{theorem}

\aliascntresetthe{example}

\newaliascnt{problem}{theorem}
\newtheorem{problem}[problem]{Problem}
\aliascntresetthe{problem}

\newaliascnt{question}{theorem}
\newtheorem{question}[question]{Question}
\aliascntresetthe{question}

\theoremstyle{remark}
\newaliascnt{remark}{theorem}
\newtheorem{remark}[remark]{Remark}
\aliascntresetthe{remark}

\newaliascnt{convention}{theorem}
\newtheorem{convention}[convention]{Convention}
\aliascntresetthe{convention}

\crefname{theorem}{theorem}{theorems}
\Crefname{theorem}{Theorem}{Theorems}
\crefname{proposition}{proposition}{propositions}
\Crefname{proposition}{Proposition}{Propositions}
\crefname{corollary}{corollary}{corollaries}
\Crefname{corollary}{Corollary}{Corollaries}
\crefname{lemma}{lemma}{lemmas}
\Crefname{lemma}{Lemma}{Lemmas}
\crefname{definition}{definition}{definitions}
\Crefname{definition}{Definition}{Definitions}
\crefname{remark}{remark}{remarks}
\Crefname{remark}{Remark}{Remarks}
\crefname{example}{example}{examples}
\Crefname{example}{Example}{Examples}
\crefname{problem}{problem}{problems}
\Crefname{problem}{Problem}{Problems}
\crefname{question}{question}{questions}
\Crefname{question}{Question}{Questions}
\crefname{conjecture}{conjecture}{conjectures}
\Crefname{conjecture}{Conjecture}{Conjectures}
\crefname{convention}{convention}{conventions}
\Crefname{convention}{Convention}{Conventions}
\crefname{figure}{Figure}{Figures}
\Crefname{figure}{Figure}{Figures}
\crefname{table}{Table}{Tables}
\Crefname{table}{Table}{Tables}

\newcommand{\R}{\mathbb{R}}
\newcommand{\Z}{\mathbb{Z}}

\newcommand{\Emb}{\operatorname{Emb}}
\newcommand{\Diff}{\operatorname{Diff}}
\newcommand{\Isom}{\operatorname{Isom}}
\newcommand{\Len}{\operatorname{Len}}
\newcommand{\Thi}{\operatorname{Thi}}
\newcommand{\Rop}{\operatorname{Rop}}
\newcommand{\hocolim}{\operatorname*{hocolim}}
\newcommand{\colim}{\operatorname*{colim}}
\newcommand{\Conn}{\operatorname{Conn}}
\newcommand{\im}{\operatorname{im}}
\newcommand{\id}{\operatorname{id}}

\newcommand{\calE}{\mathcal{E}}
\newcommand{\calK}{\mathcal{K}}
\newcommand{\calP}{\mathcal{P}}
\newcommand{\calR}{\mathcal{R}}
\newcommand{\calU}{\mathcal{U}}
\newcommand{\calX}{\mathcal{X}}

\newcommand{\eps}{\varepsilon}
\newcommand{\wh}[1]{\widehat{#1}}
\newcommand{\wid}{\mathfrak{w}}
\newcommand{\exc}{\ell}

\definecolor{ropeblue}{RGB}{31,86,140}
\definecolor{ropered}{RGB}{178,45,45}
\definecolor{ropegreen}{RGB}{40,110,70}
\definecolor{ropegrey}{RGB}{120,120,120}
\definecolor{ropelight}{RGB}{222,233,243}
\definecolor{ropesand}{RGB}{246,238,222}

\title[Thick knots and ropelength filtrations]
{Thick knots and ropelength-filtered knot spaces}

\author{Makoto Ozawa}
\address{Department of Natural Sciences, Faculty of Arts and Sciences,
Komazawa University, 1-23-1 Komazawa, Setagaya-ku, Tokyo, 154-8525, Japan}
\email{w3c@komazawa-u.ac.jp}

\date{\today}

\subjclass[2020]{Primary 57K10; Secondary 53A04, 55P15, 57R40}
\keywords{ropelength, thickness, space of knots, thick isotopy,
physical knot theory, Gordian knots, persistent homotopy, merge ultrametric}

\begin{document}

\begin{abstract}
Ropelength is usually studied as a minimisation problem for one knot at a
time.  Here we instead use ropelength to filter the \emph{space of all
realisations} of a knot type.  For a knot type \(K\) and a budget
\(\Lambda\), let \(Y_\Lambda(K)\) be the space of unit-thickness
\(C^{1,1}\) configurations of length at most \(\Lambda\), modulo rigid
motions and constant-speed reparametrisation, and let
\(\calE_{K,\Lambda}\) denote the parallel smooth ropelength sublevel.  The
aim is twofold: to give a self-contained entrance to ropelength-filtered
knot spaces, and to separate rigorously the statements that are already
theorems from the physical questions that remain open.

We prove compact capture for smooth knot families, hence homotopical and
homological exhaustion of the ordinary knot space by finite ropelength
levels.  In the normalised Euclidean model we prove compactness of every
\(Y_\Lambda(K)\), right continuity of connected components, and attainment
of connected merge levels; on the components of the ideal stratum these
levels give an ultrametric.  A key elementary result is that \(Y_L(K)\)
strongly deformation retracts onto its exact-length shell.  Thus path
connectivity in \(Y_L(K)\), unlike mere connectedness, is exactly the
fixed-length physical isotopy problem.  This leads to two distinct merge scales: an attained
\emph{connected} merge scale, which gives a rigorous lower bound, and a
\emph{path} merge scale, which is the genuine min--max quantity for
deformation paths but need not be attained.  Keeping these two scales
separate removes a subtle ambiguity in formulations of Gordian phenomena.

For higher homotopy we introduce ropelength widths.  Isometry orbits have
zero excess width, and Hatcher's orbit models imply that, for the round
\(S^3\) filtration with a symmetric orbit basepoint, every stable
positive-degree class of a torus-knot or hyperbolic-knot component is
already represented at one finite level.  This does not rule out transient
finite-level classes.  We also record mirror symmetry and a certified BFACF
mirror-merge computation as a discrete benchmark, without identifying the
discrete and continuum barriers.

The exposition includes twenty schematic figures, worked examples, a
model-by-model discussion of Gordian results, an annotated guide to the
literature, and research problems graded by difficulty.  The intended use is
as a starting point for researchers and students entering the interface
between geometric knot theory and spaces of knots.
\end{abstract}

\maketitle
\tableofcontents

\section{Introduction}\label{sec:introduction}

\subsection{A rope, a knot, and a budget}

Take a piece of rope of diameter \(2\), tie a knot in it, and splice the
ends.  You now have a closed curve together with an embedded tube of radius
\(1\) around it.  Two numbers describe the result: the length \(\Len\) of
the core curve, and the largest radius \(\Thi\) of an embedded tube, called
the \emph{thickness}.  Their quotient
\[
  \Rop(\gamma)=\frac{\Len(\gamma)}{\Thi(\gamma)}
\]
is the \emph{ropelength}: the dimensionless ratio of core length to
tube radius.  It does not change if the whole picture is
rescaled, so it is an invariant of shape rather than of size.

Now ask a physical question.

\begin{question}[The rope-budget question]\label{q:budget}
Given two configurations of the same knot type made from the same rope, can
one be deformed into the other \emph{without ever cutting the rope, letting
it pass through itself, or using more rope than we started with}?
\end{question}

Classical knot theory answers a weaker question: two configurations of the
same knot type are ambient isotopic.  But an ambient isotopy is allowed to
squeeze the rope to zero thickness, or to borrow arbitrarily much extra
length in the middle of the motion, and then give it back.  Whether a
motion exists \emph{within a fixed geometric budget} is a different
question, and the answer is different.

The most famous instance is the \emph{Gordian unknot}
(\cref{fig:gordian}).  Pieranski, Przyby\l{} and Stasiak
\cite{PieranskiGordian} exhibited numerical configurations of the unknot
whose tightening flow appears to become trapped.  The terminology now
requires some care because several closely related ``physical'' models are
in use.  In the standard thick-rope model used in this paper, the same
radius controls both the embedded tube and the curvature obstruction.  For
links, Coward and Hass \cite{CowardHass} proved that physical and topological
isotopy can differ, and Kusner and Kusner \cite{KusnerKusner} found a
Gordian pair even when only total length is constrained.  More recently,
Ayala and Hass \cite{AyalaHass} constructed large families of Gordian
unlinks, and Bauermeister \cite{Bauermeister} obtained further split-link
phenomena in the more permissive Gehring ropelength setting.  For a
\emph{single unknot}, Ayala \cite{AyalaGeometric} proves Gordian behaviour
in the thinner curvature-constrained model \(\mathcal U_1\).  In the
notation of that work, the standard thick model adopted here corresponds to
\(\mathcal U_2\), so the two questions should be distinguished.  This
model-dependence is not a technicality: it is one of the main reasons for
formulating the spaces and the allowed paths explicitly.

\begin{figure}[t]
\centering
\begin{tikzpicture}[scale=0.95]
% --- left: a rope with a tube
\begin{scope}[shift={(0,0)}]
  \draw[line width=7.5pt,ropelight,line cap=round]
     (-1.9,0) .. controls (-1.2,0.9) and (-0.4,-0.9) .. (0.35,0.1)
              .. controls (0.9,0.85) and (1.5,-0.5) .. (2.0,0.15);
  \draw[line width=1.1pt,ropeblue,line cap=round]
     (-1.9,0) .. controls (-1.2,0.9) and (-0.4,-0.9) .. (0.35,0.1)
              .. controls (0.9,0.85) and (1.5,-0.5) .. (2.0,0.15);
  \draw[<->,>=Stealth,ropered,line width=0.7pt] (-1.35,0.33) -- ++(0,0.36);
  \node[ropered,font=\scriptsize,above left] at (-1.3,0.64) {$\Thi$};
  \node[font=\scriptsize,below] at (0.05,-0.75) {core curve inside its tube};
\end{scope}
% --- right: tangled unknot
\begin{scope}[shift={(6.1,0)},scale=1.0]
  \draw[line width=6pt,ropesand,line cap=round,line join=round]
    (0,0.95) .. controls (1.35,0.95) and (1.45,-0.15) .. (0.5,-0.15)
             .. controls (-0.45,-0.15) and (-0.4,0.62) .. (0.42,0.62)
             .. controls (1.25,0.62) and (1.3,-0.62) .. (0.15,-0.62)
             .. controls (-1.4,-0.62) and (-1.45,0.95) .. (0,0.95);
  \draw[line width=1.1pt,ropered,line cap=round,line join=round]
    (0,0.95) .. controls (1.35,0.95) and (1.45,-0.15) .. (0.5,-0.15)
             .. controls (-0.45,-0.15) and (-0.4,0.62) .. (0.42,0.62)
             .. controls (1.25,0.62) and (1.3,-0.62) .. (0.15,-0.62)
             .. controls (-1.4,-0.62) and (-1.45,0.95) .. (0,0.95);
  \node[font=\scriptsize,below] at (0,-1.0) {a tangled \emph{unknotted} rope};
  \draw[->,>=Stealth,line width=0.8pt,ropegrey] (2.1,0.15) -- (3.1,0.15);
  \node[font=\scriptsize,above,ropegrey] at (2.6,0.2) {?};
\end{scope}
% --- far right: round circle
\begin{scope}[shift={(9.9,0.15)}]
  \draw[line width=6pt,ropesand] (0,0) circle (0.72);
  \draw[line width=1.1pt,ropered] (0,0) circle (0.72);
  \node[font=\scriptsize,below] at (0,-1.15) {the round circle};
\end{scope}
\end{tikzpicture}
\caption{Left: thickness is the radius of the largest embedded tube.
Right: the standard thick-unknot question.  Can every unknotted
unit-thickness configuration be carried to the round configuration by a
fixed-length physical isotopy?  Numerical work motivated a negative answer
\cite{PieranskiGordian}; rigorous Gordian results are known for links
\cite{CowardHass,KusnerKusner,AyalaHass}, while
\cite{AyalaGeometric} proves Gordian unknots in a related thinner model.}
\label{fig:gordian}
\end{figure}

\subsection{Turning the puzzle into topology}

The purpose of this paper is to place questions of this kind inside a single
elementary framework, and to show that once the framework is set up, several
things that look like folklore become theorems, several things that look
like theorems are only conjectures, and the boundary between the two can be
drawn precisely.

The framework is a filtration by geometric budget.  Normalise the rope to
have thickness \(1\).  For a knot type \(K\) and a level \(\Lambda\), put
\begin{equation}\label{eq:Y-intro}
  Y_\Lambda(K)=
  \bigl\{\gamma:\ [\gamma]=K,\ \Thi(\gamma)\ge1,\ \Len(\gamma)\le\Lambda
  \bigr\}\big/\!\sim,
\end{equation}
where \(\sim\) identifies configurations differing by a rigid motion and a
reparametrisation.  As \(\Lambda\) grows, \(Y_\Lambda(K)\) grows; the union
over all \(\Lambda\) is the space of all thick configurations of \(K\).  The
smallest level with \(Y_\Lambda(K)\ne\varnothing\) is the ropelength
\(m_K=\Rop(K)\) of the knot type, and \(I(K)=Y_{m_K}(K)\) is the
\emph{ideal stratum}: the space of tight, or ideal, configurations.

Everything in this paper is a statement about how the topology of
\(Y_\Lambda(K)\) --- or of the parallel smooth model \(\calE_{K,\Lambda}\)
of \cref{sec:filtration} --- changes with \(\Lambda\).  The pictures to keep
in mind are collected in \cref{fig:landscape}: the space of all
configurations of a fixed knot type carries a height function, the sublevels
are the regions below a rising water line, and one wants to know when
islands merge, when loops of configurations become fillable, and when the
picture stabilises.

\begin{figure}[t]
\centering
\begin{tikzpicture}[scale=1.0]
\def\lw{1.0pt}
% landscape profile
\draw[fill=ropesand,draw=none]
  (0,0) .. controls (0.7,1.55) and (1.1,0.35) .. (1.8,0.35)
        .. controls (2.5,0.35) and (2.7,2.05) .. (3.5,2.05)
        .. controls (4.3,2.05) and (4.2,0.75) .. (5.0,0.75)
        .. controls (5.8,0.75) and (6.0,1.85) .. (6.8,1.85)
        .. controls (7.4,1.85) and (7.3,0.72) .. (7.9,0.72)
        .. controls (8.2,0.72) and (8.2,1.15) .. (8.4,1.5)
        -- (8.4,-0.15) -- (0,-0.15) -- cycle;
\draw[line width=\lw,ropegrey]
  (0,0) .. controls (0.7,1.55) and (1.1,0.35) .. (1.8,0.35)
        .. controls (2.5,0.35) and (2.7,2.05) .. (3.5,2.05)
        .. controls (4.3,2.05) and (4.2,0.75) .. (5.0,0.75)
        .. controls (5.8,0.75) and (6.0,1.85) .. (6.8,1.85)
        .. controls (7.4,1.85) and (7.3,0.72) .. (7.9,0.72)
        .. controls (8.2,0.72) and (8.2,1.15) .. (8.4,1.5);
% water levels
\draw[ropeblue,line width=0.9pt,dashed] (-0.25,0.35) -- (8.7,0.35);
\node[ropeblue,font=\scriptsize,left] at (-0.25,0.35) {$\Lambda_1$};
\draw[ropeblue,line width=0.9pt,dashed] (-0.25,1.15) -- (8.7,1.15);
\node[ropeblue,font=\scriptsize,left] at (-0.25,1.15) {$\Lambda_2$};
\draw[ropeblue,line width=0.9pt,dashed] (-0.25,1.95) -- (8.7,1.95);
\node[ropeblue,font=\scriptsize,left] at (-0.25,1.95) {$\Lambda_3$};
% minima markers
\filldraw[ropered] (1.8,0.35) circle (1.6pt);
\filldraw[ropered] (5.0,0.75) circle (1.6pt);
\filldraw[ropered] (7.9,0.72) circle (1.6pt);
\node[ropered,font=\scriptsize,below] at (1.8,0.3) {$A$};
\node[ropered,font=\scriptsize,below] at (5.0,0.7) {$B$};
\node[ropered,font=\scriptsize,below] at (7.9,0.68) {$C$};
\node[font=\scriptsize,align=center] at (4.2,-0.55)
  {space of thick configurations of the knot type $K$};
\node[rotate=90,font=\scriptsize] at (-0.95,1.1) {length};
\end{tikzpicture}
\caption{Ropelength persistence in one picture.  The horizontal direction is
the space of all unit-thickness configurations of a fixed knot type; the
height is length.  The sublevel \(Y_\Lambda(K)\) is what lies below the
water line \(\Lambda\).  Local minima \(A,B,C\) are ideal or critical
configurations; the level at which two of them are first joined below water
is their \emph{connected} merge height, and \cref{thm:merge-ultrametric}
says that this level is attained and gives an ultrametric on connected
components of the ideal stratum.  Path merge is a separate, more physical
notion; see \cref{rem:path-vs-conn}.}
\label{fig:landscape}
\end{figure}

\subsection{Results}

The results are deliberately foundational.  Their purpose is to make the
filtration precise enough that geometric questions, topological questions,
and physical-isotopy questions are not silently conflated.

\begin{enumerate}[label=\textup{(\Alph*)},leftmargin=3.1em]
\item \textbf{Compact capture and exhaustion}
(\cref{thm:compact-family-bound,thm:homotopical-exhaustion}).  Every compact
family of smooth knots has uniformly bounded ropelength.  Consequently the
ropelength filtration is exhaustive for homotopy and homology, and a cofinal
mapping telescope recovers the ordinary knot-space component up to weak
homotopy equivalence.  Thus the Hatcher--Budney homotopy type is the stable
target of the filtration.

\item \textbf{Degree zero: compactness, fixed length, and two merge scales}
(\cref{thm:knot-sublevel-compact,prop:fixed-length-retract,%
thm:merge-ultrametric}).  Every finite \(Y_\Lambda(K)\) is compact
Hausdorff.  Connected merge levels are therefore attained and give an
ultrametric on connected components of the ideal stratum.  On the other
hand \(Y_L(K)\) strongly deformation retracts onto the exact-length shell,
so \emph{path} connectivity in \(Y_L(K)\) is precisely fixed-length
physical isotopy.  The connected merge \(\mu_K\) and the path merge
\(\mu_K^{\rm path}\) satisfy
\(\mu_K\le\mu_K^{\rm path}\); only the first is presently known to be
attained.  This distinction is the basis of
\cref{thm:untying-dichotomy}.

\item \textbf{Widths and symmetry}
(\cref{prop:width-properties,thm:orbit-width,thm:vanishing-width}).
Ropelength width gives a non-Archimedean group seminorm after subtraction of
the base level.  Classes carried by ambient isometry orbits have zero excess
width.  For the round \(S^3\) model, Hatcher's orbit descriptions imply
that all \emph{stable} positive-degree classes of torus-knot and hyperbolic-
knot components are represented at the ropelength of a symmetric orbit
basepoint.  This is a statement about stable classes; transient classes
below that level may still contain filtered information.  Satellite models,
with their additional non-isometric parameters, are therefore natural
places to look for positive stable excess width.

\item \textbf{Mirror scales and a benchmark.}
Reflection preserves the filtration, giving a \(\Z/2\)-action for
amphichiral knot types
(\cref{prop:mirror-involution,cor:mirror-barrier-criterion}).  For the figure-eight knot, a certified seed-generated simple cubic
BFACF computation gives a discrete mirror merge at \(N=32\) from a
\(30\)-edge seed \cite{OzawaDiscrete}.  We use this only as a finite
benchmark: no lattice-to-continuum implication is claimed.
\end{enumerate}

\subsection{What is standard, what is proved here, and what remains open}
\label{subsec:whats-new}

A recurring source of confusion in this subject is that closely related
models answer different questions.  We therefore record provenance and
scope explicitly.

\begin{table}[p]
\footnotesize
\begin{tabular}{@{}p{5.25cm}p{6.85cm}@{}}
\toprule
\textbf{Statement} & \textbf{Status}\\
\midrule
Existence and \(C^{1,1}\) regularity of ropelength minimisers
 & Standard; Cantarella--Kusner--Sullivan \cite{CKS}, with related
   formulations in \cite{GonzalezMaddocks,SchurichtvdMosel}.\\
Compactness of \(\{\Thi\ge1,\ \Len\le\Lambda\}/\!\sim\)
 (\cref{thm:knot-sublevel-compact})
 & The analytic compactness mechanism is standard in ropelength theory;
   we isolate the quotient statement needed for the filtration.\\
Compact capture and exhaustion
 (\cref{thm:compact-family-bound,thm:homotopical-exhaustion})
 & Elementary consequence of local tubular stability and compactness of
   the parameter space; included because it is the bridge to knot-space
   topology.\\
Exact-length retraction (\cref{prop:fixed-length-retract})
 & Proposition proved here.  It identifies path connectivity of
   \(Y_L(K)\) with fixed-length physical isotopy.\\
Right continuity of connected components and attained connected merge
 (\cref{cor:right-continuity,thm:merge-ultrametric})
 & Consequences of compactness, isolated here in the form needed for
   ropelength filtrations.\\
Path merge and physical Gordian behaviour
 (\cref{rem:path-vs-conn,thm:untying-dichotomy})
 & Path merge is the min--max budget for actual paths.  At a fixed level,
   path connectivity agrees with fixed-length physical isotopy.  Equality
   with connected merge, and attainment of the path infimum, are open in
   the standard model.\\
Gordian phenomena
 & Rigorous for several link models
   \cite{CowardHass,KusnerKusner,AyalaHass,Bauermeister}.  Ayala
   \cite{AyalaGeometric} proves Gordian unknots in a thinner model
   \(\mathcal U_1\); this does not by itself settle the standard thick model
   used here.\\
Width and isometry-orbit vanishing
 (\cref{prop:width-properties,thm:orbit-width})
 & Formal once compact capture is known.\\
Stable width for torus and hyperbolic knot spaces
 (\cref{thm:vanishing-width})
 & Consequence of Hatcher's orbit models
   \cite{HatcherModuli,HatcherSpaces}; scoped to the round \(S^3\) model and
   an orbit basepoint.\\
Discrete figure-eight mirror merge \(30\to32\)
 & Certified companion computation \cite{OzawaDiscrete}; discrete and
   move-system specific.\\
Continuum figure-eight mirror separation
 & Open (\cref{prob:figure-eight-mirror}).\\
Higher Reidemeister--Cerf comparison
 & Programmatic.  Classical one-parameter transversality is supported by
   Queffelec \cite{Queffelec}; the filtered higher comparison remains
   conjectural (\cref{conj:higher-reidemeister}).\\
\bottomrule
\end{tabular}
\caption{Scope and provenance.  The distinction between connected and path
components is deliberately visible: it is mathematically essential in
degree zero.}
\label{tab:status}
\end{table}

The table also suggests a hierarchy of questions.  Degree zero is already
nontrivial in physical link theory, but in our compact knot sublevels the
quantity with the cleanest attainment theorem is the \emph{connected}
merge, not the path merge.  In positive degree, the stable classes of the
best understood torus and hyperbolic components are geometrically cheap in
the round-sphere model, while transient classes remain possible.  Thus the
filtration has two complementary frontiers: understanding path components at
finite budget, and finding genuinely filtered higher-homotopy phenomena.

\subsection{How to read this paper}

\Cref{sec:thickness,sec:knotspaces} are introductory and assume only
multivariable calculus, point-set topology, and the definition of a knot.  A
reader who knows ropelength can skip \cref{sec:thickness}; a reader who
knows spaces of knots can skip \cref{sec:knotspaces}; a reader who knows
both should start at \cref{sec:filtration}.  Statements marked
\emph{Beginner's note} are addressed to a student meeting the subject for
the first time and may be skipped without loss.  \Cref{sec:problems}
contains a graded list of problems, and \cref{app:literature} is an
annotated reading list.

\subsection{Organisation}

\Cref{sec:thickness} develops thickness and ropelength from the definitions,
with the elementary bounds and the numerical landscape.
\Cref{sec:knotspaces} does the same for spaces of knots and states what
Hatcher and Budney prove.  \Cref{sec:filtration} introduces the filtration
and proves compact capture and exhaustion.  \Cref{sec:widths} develops
widths, death levels and the persistence window, and proves the vanishing
theorem.  \Cref{sec:degreezero} is the degree-zero theory: compactness,
right continuity, untying levels, merge ultrametrics, Gordian phenomena,
mirror scales, and the figure-eight benchmark.  \Cref{sec:bridges} gives
three bridges to the Hatcher--Budney picture, including the pole-marked
complement bundle.  \Cref{sec:program} is a short and explicitly
programmatic section on finite diagrammatic models.
\Cref{sec:problems} lists problems by difficulty, and
\cref{app:notation,app:literature} give a notation table and a guide to the
literature.

\subsection*{Acknowledgements}

The author thanks the referees of earlier versions of this work, whose
criticism prompted the present reorganisation.

\section{Thickness and ropelength from scratch}
\label{sec:thickness}

This section is written for a reader who has never used ropelength.  Nothing
in it is new; the aim is to fix conventions carefully, because the
literature contains two of them, and to record the elementary inequalities
that will be used later.

\subsection{Thickness}

Let \(\gamma:S^1\to\R^3\) be an embedded closed curve of class \(C^{1,1}\),
that is, a \(C^1\) curve whose unit tangent is Lipschitz.  For \(r>0\) let
\[
  T_r(\gamma)=\{x\in\R^3:\operatorname{dist}(x,\gamma)\le r\}
\]
be the closed \(r\)-neighbourhood of the image.

\begin{definition}[Thickness]\label{def:thickness}
The \emph{thickness} \(\Thi(\gamma)\) is the supremum of those \(r>0\) for
which \(T_r(\gamma)\) is an embedded solid torus obtained as the union of
the closed normal discs of radius \(r\); equivalently, \(\Thi(\gamma)\) is
the \emph{reach} of the image in the sense of Federer \cite{Federer}.
\end{definition}

Two independent things can destroy a thick tube, and only two.  The tube can
fail because the curve bends too sharply, so that normal discs on the
\emph{same} short arc already overlap; or it can fail because two
\emph{distant} pieces of the curve come close together.  The second is
measured by the \emph{doubly critical self-distance}
\[
  \operatorname{dcsd}(\gamma)
  =\inf\bigl\{\,|\gamma(s)-\gamma(t)|\ :\ s\ne t,\
   \gamma(s)-\gamma(t)\perp\gamma'(s),\gamma'(t)\,\bigr\},
\]
the infimum of lengths of \emph{doubly critical chords}, that is, chords
meeting the curve perpendicularly at both ends.  The following formula is
the standard computational description of thickness.

\begin{theorem}[Thickness formula; Litherland--Simon--Durumeric--Rawdon
\cite{LSDR}, Gonzalez--Maddocks \cite{GonzalezMaddocks}, see also
{\cite[Lemma~1.1]{CKS}}]
\label{thm:thickness-formula}
For an embedded \(C^{1,1}\) curve \(\gamma\),
\[
  \Thi(\gamma)
  =\min\Bigl\{\ \inf_{s}\rho(s),\ \tfrac12\operatorname{dcsd}(\gamma)\Bigr\},
\]
where \(\rho(s)=1/|\kappa(s)|\) is the radius of curvature, defined almost
everywhere.  Equivalently \textup{(}Gonzalez--Maddocks
\cite{GonzalezMaddocks}, {\cite[Lemma~1.1]{CKS}}\textup{)},
\begin{equation}\label{eq:three-point}
  \Thi(\gamma)=\inf\bigl\{\,R(x,y,z)\ :\ x,y,z\ \text{distinct points of}\
  \gamma\,\bigr\},
\end{equation}
where \(R(x,y,z)\) is the radius of the circle through \(x,y,z\).
\end{theorem}

\begin{remark}[Beginner's note: why \eqref{eq:three-point} is the useful form]
\label{rem:three-point}
Formula \eqref{eq:three-point} expresses thickness as an infimum of
quantities each of which is a continuous function of three points on the
curve.  Consequently the condition \(\Thi\ge1\) is \emph{closed} under
uniform convergence: if \(\gamma_n\to\gamma\) uniformly and every
\(\gamma_n\) has thickness at least \(1\), then so does \(\gamma\).  This
one observation is what makes the compactness theorem of
\cref{sec:degreezero} work, and it is the reason the three-point form,
rather than \cref{thm:thickness-formula} itself, is the workhorse of the
subject.
\end{remark}

\begin{figure}[t]
\centering
\begin{tikzpicture}[scale=1.0]
% (a) curvature obstruction
\begin{scope}
  \draw[line width=9pt,ropelight,line cap=round]
     (-1.7,-0.75) -- (0.42,-0.75)
     .. controls (1.02,-0.75) and (1.02,0.45) .. (0.42,0.45)
     -- (-1.7,0.45);
  \draw[line width=1.1pt,ropeblue,line cap=round]
     (-1.7,-0.75) -- (0.42,-0.75)
     .. controls (1.02,-0.75) and (1.02,0.45) .. (0.42,0.45)
     -- (-1.7,0.45);
  \draw[ropered,line width=0.8pt,dashed] (0.42,-0.15) circle (0.45);
  \filldraw[ropered] (0.42,-0.15) circle (1.2pt);
  \draw[->,>=Stealth,ropered,line width=0.7pt] (0.42,-0.15) -- ++(0.45,0);
  \node[ropered,font=\scriptsize,below] at (0.72,-0.22) {$\rho$};
  \node[font=\small] at (0.1,-1.25) {(a) too sharp a bend};
\end{scope}
% (b) self-contact obstruction
\begin{scope}[shift={(5.4,0)}]
  \draw[line width=10pt,ropelight,line cap=round]
     (-1.7,0.95) .. controls (-0.4,0.95) and (0.4,0.62) .. (1.7,0.62);
  \draw[line width=10pt,ropelight,line cap=round]
     (-1.7,-0.6) .. controls (-0.4,-0.6) and (0.4,-0.25) .. (1.7,-0.25);
  \draw[line width=1.1pt,ropeblue,line cap=round]
     (-1.7,0.95) .. controls (-0.4,0.95) and (0.4,0.62) .. (1.7,0.62);
  \draw[line width=1.1pt,ropeblue,line cap=round]
     (-1.7,-0.6) .. controls (-0.4,-0.6) and (0.4,-0.25) .. (1.7,-0.25);
  \draw[<->,>=Stealth,ropered,line width=0.8pt] (0.05,0.755) -- (0.05,-0.415);
  \node[ropered,font=\scriptsize,right] at (0.12,0.17)
    {$\operatorname{dcsd}$};
  \node[font=\small] at (0,-1.25) {(b) two strands too close};
\end{scope}
\end{tikzpicture}
\caption{The only two obstructions to a thick tube, and the two terms of
\cref{thm:thickness-formula}.  In (a) the radius of curvature \(\rho\) is
small; in (b) a doubly critical chord is short.  Thickness is the smaller of
\(\inf\rho\) and \(\tfrac12\operatorname{dcsd}\).}
\label{fig:two-obstructions}
\end{figure}

\begin{remark}[Beginner's note]
It is tempting to define thickness as ``half the minimal distance between
non-adjacent points''.  That does not work: nearby points of the curve are
always close, so the infimum over \emph{all} pairs is \(0\).  The role of
the perpendicularity condition in \(\operatorname{dcsd}\) is exactly to
discard such pairs, keeping only true near-contacts of distinct strands.
The curvature term is then needed separately, since a very tight bend has no
short doubly critical chord at all.
\end{remark}

\subsection{Ropelength and its two normalisations}

\begin{definition}[Ropelength]\label{def:ropelength}
\(\displaystyle \Rop(\gamma)=\frac{\Len(\gamma)}{\Thi(\gamma)}\), and for a
knot type \(K\),
\(\displaystyle m_K=\Rop(K)=\inf\{\Rop(\gamma):[\gamma]=K\}\).
\end{definition}

Ropelength is invariant under rigid motions, reparametrisation and
scaling, since scaling by \(s>0\) multiplies both \(\Len\) and \(\Thi\) by
\(s\) (\cref{fig:scale}).  Consequently one may always normalise, and we
shall normalise in the way that is most convenient for filtration
arguments: \emph{thickness at least \(1\)}, so that ropelength becomes
length.

\begin{convention}[Normalisation]\label{conv:normalisation}
We take \(\Thi\) to be the \emph{radius} of the largest embedded tube, as in
\cite{CKS}.  With this convention the round circle of radius \(1\) has
\(\Thi=1\), \(\Len=2\pi\) and \(\Rop=2\pi\).  Part of the literature, for
instance \cite{DDS}, uses the tube \emph{diameter} instead, which halves all
ropelength values.  Numerical values quoted below have been converted to the
radius convention; the reader comparing sources should always check which
convention is in force, since a factor of \(2\) is easy to lose.
\end{convention}

\begin{figure}[b]
\centering
\begin{tikzpicture}[scale=1.0]
  \draw[line width=8pt,ropesand] (0,0) circle (0.55);
  \draw[line width=1pt,ropered] (0,0) circle (0.55);
  \node[font=\scriptsize,below] at (0,-1.0) {$\Len=2\pi,\ \Thi=1$};
  \node[font=\small] at (2.35,0) {$\Rop=2\pi$};
  \draw[line width=16pt,ropesand] (5.4,0) circle (1.1);
  \draw[line width=1pt,ropered] (5.4,0) circle (1.1);
  \node[font=\scriptsize,below] at (5.4,-1.6) {$\Len=4\pi,\ \Thi=2$};
  \node[font=\small] at (8.0,0) {$\Rop=2\pi$};
\end{tikzpicture}
\caption{Ropelength is scale invariant, so it is a property of the shape and
not of the size.  This is why the normalisation \(\Thi\ge1\) of
\eqref{eq:Y-intro} loses nothing.}
\label{fig:scale}
\end{figure}

\subsection{Two elementary bounds a beginner can prove}

The following two propositions are the whole of what one needs to get a feel
for the numbers.  Both proofs are three lines, and both are worth doing once.

\begin{proposition}[The round circle is the tight unknot]
\label{prop:unknot-2pi}
If \(\gamma\) is a closed \(C^{1,1}\) curve with \(\Thi(\gamma)\ge1\), then
\(\Len(\gamma)\ge2\pi\), with equality if and only if \(\gamma\) is a round
circle of radius \(1\).  In particular \(\Rop(U)=2\pi\) for the unknot
\(U\), and the ideal stratum \(I(U)\) is a single point.
\end{proposition}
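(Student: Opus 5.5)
The plan is to combine the curvature half of \cref{thm:thickness-formula} with Fenchel's theorem on total curvature. Parametrise \(\gamma\) by arc length \(s\in[0,L]\), \(L=\Len(\gamma)\). Since \(\gamma\) is \(C^{1,1}\), the unit tangent \(T=\gamma'\) is Lipschitz, so the curvature \(\kappa=|T'|\) exists almost everywhere. By \cref{thm:thickness-formula}, \(\Thi(\gamma)\ge1\) forces \(\rho(s)\ge1\), that is \(|\kappa(s)|\le1\), for almost every \(s\). Integrating over the curve,
\[
  \int_0^L|\kappa(s)|\,ds\le L .
\]
Fenchel's theorem gives \(\int_0^L|\kappa|\,ds\ge2\pi\) for every closed curve. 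Hence \(L\ge2\pi\).

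Two points need care in this step. First, Fenchel's theorem must hold for \(C^{1,1}\) curves. This follows because the tangent indicatrix \(T:S^1\to S^2\) is a Lipschitz closed curve of length \(\int|\kappa|\), and it is not contained in any open hemisphere: for every unit vector \(v\), \(\int\langle T,v\rangle\,ds=0\). A closed curve on \(S^2\) with that property has length at least \(2\pi\), which is the standard Horn/Fenchel lemma and needs only rectifiability. Second, we need \(|\kappa|\le1\) a.e.\ directly from thickness. One can bypass the formula by using \eqref{eq:three-point}: letting the three points coalesce at a point of twice-differentiability, \(R(x,y,z)\) tends to \(1/|\kappa(s)|\), so \(\rho(s)\ge\Thi(\gamma)\ge1\) there.

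For the equality case, suppose \(L=2\pi\). Then both inequalities are equalities. Equality in \(\int|\kappa|\le L\) gives \(|\kappa|=1\) a.e. Equality in Fenchel's theorem forces the tangent indicatrix to be a great circle traversed once. I expect this rigidity step to be the main obstacle at \(C^{1,1}\) regularity. My plan is to get it from the Horn lemma's equality case: a closed rectifiable curve on \(S^2\) of length exactly \(2\pi\) that meets every closed hemisphere is a great circle, possibly degenerate, and degeneracy is excluded here because \(|T'|=1\). Once \(T\) is known to run along a great circle with unit speed, \(\gamma\) is planar with constant curvature \(1\), hence a round circle of radius \(1\). The converse, that the unit circle has \(\Thi=1\) and \(\Len=2\pi\), is immediate.

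The consequences then follow. By the scale invariance of \cref{def:ropelength}, \(m_U=\inf\Len\) over unit-thickness unknots, which equals \(2\pi\) and is attained by the round circle. The ideal stratum \(I(U)=Y_{2\pi}(U)\) therefore consists of round unit circles. All of these are identified by rigid motions and reparametrisation under \(\sim\), so \(I(U)\) is a single point.
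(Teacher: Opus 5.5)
Your route is the paper's: the curvature term of the thickness formula gives $|\kappa|\le1$ a.e., Fenchel gives $\Len(\gamma)\ge\int|\kappa|\,ds\ge2\pi$, and the equality case comes from Fenchel's equality case together with $|\kappa|\equiv1$. The paper simply quotes that equality case. You try to prove it at $C^{1,1}$ regularity, and that is where the one genuine error lies.

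The lemma you invoke is false: a closed rectifiable curve on $S^2$ of length exactly $2\pi$ that meets every closed hemisphere need not be a great circle, even allowing degeneracy. Take two distinct great semicircles from $p$ to $-p$ and traverse them in succession at unit speed. The resulting closed curve has length $2\pi$. It contains the antipodal pair $p,-p$, so it lies in no open hemisphere, yet it is not a great circle. A single semicircle traversed out and back is another counterexample. Both have $|T'|=1$, so your remark that unit speed excludes degeneracy does not rescue the step.

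What rules these curves out is the balancing condition $\int T\,ds=0$ that you already derived. A unit-speed semicircle with midpoint $w$ contributes $2w$, so two semicircles balance only if their midpoints are antipodal, i.e.\ only if they form one great circle. The clean repair is the standard one:
\begin{enumerate}
\item The midpoint argument behind Horn's lemma shows that any closed curve on $S^2$ of length at most $2\pi$ lies in some \emph{closed} hemisphere $\{\langle x,v\rangle\ge0\}$.
\item Then $\langle T,v\rangle\ge0$ and $\int\langle T,v\rangle\,ds=0$ force $\langle T,v\rangle=0$ a.e., so $\gamma$ is planar.
\item In that plane write $T=(\cos\theta,\sin\theta)$ with $\theta$ Lipschitz and $|\theta'|=1$ a.e. Since $\gamma$ is embedded, Hopf's Umlaufsatz gives rotation index $\pm1$. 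So the net change of $\theta$ has absolute value $2\pi$, equal to its total variation $\int|\theta'|\,ds=2\pi$.
\item Hence $\theta$ is monotone, $\theta(s)=\pm s+c$, and $\gamma$ is a unit circle.
\end{enumerate}
With that substitution the rest of your proposal goes through, including the three-point derivation of $|\kappa|\le1$ and the identification of $I(U)$ as a single point.
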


\begin{proof}
By \cref{thm:thickness-formula}, \(\Thi(\gamma)\ge1\) forces
\(\rho\ge1\), that is \(|\kappa|\le1\) almost everywhere.  Hence
\[
  \Len(\gamma)=\int_{\gamma}1\,ds\ \ge\ \int_\gamma|\kappa|\,ds\ \ge\ 2\pi ,
\]
the last step being Fenchel's theorem \cite{Fenchel}: a closed curve in
\(\R^3\) has total curvature at least \(2\pi\).  Equality forces
\(|\kappa|\equiv1\) almost everywhere and total curvature exactly \(2\pi\);
the equality case of Fenchel's theorem says the curve is a plane convex
curve, and a plane convex curve of constant curvature \(1\) is a circle of
radius \(1\).  Conversely the unit circle has thickness \(1\) and length
\(2\pi\).  Since all unit circles are congruent, \(I(U)\) is one point.
\end{proof}

\begin{proposition}[A knotted rope needs \(4\pi\)]
\label{prop:farymilnor-bound}
If \(\gamma\) is a closed \(C^{1,1}\) curve with \(\Thi(\gamma)\ge1\) whose
knot type is nontrivial, then \(\Len(\gamma)>4\pi\approx12.566\).
Consequently \(m_K>4\pi\) for every nontrivial knot type \(K\).
\end{proposition}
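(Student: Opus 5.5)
We argue exactly as in \cref{prop:unknot-2pi}, replacing Fenchel's theorem by the Fáry--Milnor theorem. Suppose \(\gamma\) is a closed \(C^{1,1}\) curve with \(\Thi(\gamma)\ge1\) and nontrivial knot type. By \cref{thm:thickness-formula}, \(\inf_s\rho(s)\ge\Thi(\gamma)\ge1\), so \(|\kappa|\le1\) almost everywhere. Since \(\gamma\) is \(C^{1,1}\), the unit tangent is Lipschitz and therefore absolutely continuous. Hence the total curvature \(\int_\gamma|\kappa|\,ds\) is well defined, and it agrees with Milnor's polygonal definition of total curvature. We then obtain
\[
  \Len(\gamma)=\int_\gamma 1\,ds\ \ge\ \int_\gamma|\kappa|\,ds\ >\ 4\pi ,
\]
where the last, strict inequality is the Fáry--Milnor theorem in Milnor's form: a knotted closed curve has total curvature strictly greater than \(4\pi\). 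Milnor's version applies to arbitrary closed curves of finite total curvature, so it covers the \(C^{1,1}\) class.

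The only point needing care is strictness, which I expect to be the main obstacle. One route is Milnor's sharper statement that the total curvature of a knotted curve is \(>4\pi\), not merely \(\ge4\pi\). His proof shows that if the total curvature were \(\le 4\pi\), then for almost every direction the height function would have exactly one maximum and one minimum, which forces the curve to be unknotted. This rules out equality directly.

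If one only wants to quote the weak inequality \(\ge4\pi\), the following fallback also gives strictness. Equality \(\Len(\gamma)=4\pi\) would force \(|\kappa|\equiv1\) almost everywhere and total curvature exactly \(4\pi\). By Milnor's theorem, a curve with total curvature exactly \(4\pi\) is still unknotted, contradicting nontriviality. Either way, \(\Len(\gamma)>4\pi\approx12.566\).

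For the consequence, scaling invariance (\cref{fig:scale}) reduces every \(\Rop(\gamma)\) to the length of a curve normalised to \(\Thi=1\). Thus \(\Rop(\gamma)>4\pi\) for every \(\gamma\) of type \(K\), and so \(m_K\ge4\pi\).

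To obtain the strict inequality \(m_K>4\pi\), we use the fact that the infimum is attained. By the existence of ropelength minimisers \cite{CKS} (recorded in \cref{tab:status}), there is a \(C^{1,1}\) curve \(\gamma_0\) of type \(K\) with \(\Thi(\gamma_0)=1\) and \(\Len(\gamma_0)=m_K\). The first part applied to \(\gamma_0\) gives \(m_K=\Len(\gamma_0)>4\pi\).
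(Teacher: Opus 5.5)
Your proof is correct and is essentially the paper's: the thickness formula gives \(|\kappa|\le1\) almost everywhere, so \(\Len(\gamma)\ge\int_\gamma|\kappa|\,ds>4\pi\) by the strict F\'ary--Milnor theorem, and your use of the existence of minimisers \cite{CKS} to pass from \(m_K\ge4\pi\) to \(m_K>4\pi\) makes explicit a step the paper leaves inside the word ``Consequently''. Two side remarks should be dropped or fixed, though neither carries the argument: your ``fallback'' is circular, since the claim that total curvature exactly \(4\pi\) forces the unknot \emph{is} the strict F\'ary--Milnor theorem, and Milnor's averaging argument gives only a positive-measure set of directions with a single maximum (and only when the total curvature is below \(4\pi\)), not almost every direction.
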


\begin{proof}
As above \(|\kappa|\le1\) almost everywhere, so
\(\Len(\gamma)\ge\int_\gamma|\kappa|\,ds\), and the F\'ary--Milnor theorem
\cite{Fary,Milnor} says that a nontrivially knotted closed curve has total
curvature greater than \(4\pi\).
\end{proof}

\begin{remark}[Beginner's note]
\Cref{prop:farymilnor-bound} is the prototype of every lower bound in the
subject: convert a \emph{topological} statement (F\'ary--Milnor) into a
\emph{metric} one using the pointwise inequality \(|\kappa|\le1/\Thi\).  The
sharper bounds below replace F\'ary--Milnor by more refined geometric
inputs, but the mechanism is the same.
\end{remark}

\subsection{The numerical landscape}

\Cref{fig:numberline} places several useful benchmark bounds next to two
widely used numerical estimates.  Only the unknot value \(2\pi\) is an exact
knot-type ropelength in this figure; the dashed nontrivial-knot marks are
rigorous lower bounds, while the trefoil and figure-eight marks are
numerical.

\begin{figure}[t]
\centering
\begin{tikzpicture}[x=0.248cm,y=1cm]
\draw[->,>=Stealth,line width=0.9pt] (0,0) -- (46.5,0);
\foreach \x in {0,5,10,15,20,25,30,35,40,45}
  \draw[line width=0.7pt] (\x,0) -- (\x,-0.1) node[below,font=\scriptsize] {\x};
\node[font=\scriptsize,right] at (46.7,0) {$\Rop$};
% lower bounds, above the axis, staggered
\draw[ropegreen,line width=1.2pt] (6.283,0) -- (6.283,1.42);
\node[ropegreen,font=\scriptsize,above] at (6.283,1.44) {$2\pi$: unknot};
\draw[ropeblue,line width=1pt,dashed] (12.566,0) -- (12.566,0.72);
\node[ropeblue,font=\scriptsize,above] at (12.566,0.74)
  {$4\pi$: F\'ary--Milnor};
\draw[ropeblue,line width=1pt,dashed] (21.45,0) -- (21.45,1.42);
\node[ropeblue,font=\scriptsize,above] at (21.45,1.44) {$21.45$: \cite{CKS}};
\draw[ropeblue,line width=1pt,dashed] (31.32,0) -- (31.32,0.72);
\node[ropeblue,font=\scriptsize,above] at (31.32,0.74) {$31.32$: \cite{DDS}};
% computed values, below the axis
\draw[ropered,line width=1.2pt] (32.74,0) -- (32.74,-0.66);
\node[ropered,font=\scriptsize,below] at (31.6,-0.68) {$32.74$: $3_1$};
\draw[ropered,line width=1.2pt] (42.09,0) -- (42.09,-1.18);
\node[ropered,font=\scriptsize,below] at (41.2,-1.2) {$42.09$: $4_1$};
\end{tikzpicture}
\caption{The ropelength number line, in the radius convention of
\cref{conv:normalisation}.  Dashed marks are proved lower bounds valid for
\emph{every} nontrivial knot; solid marks are values of specific knot types.
The gap between \(31.32\) and \(32.74\) is about \(4.5\%\): the general
lower bound is already very close to the trefoil's value.  Numerical values
are from \cite{ACPR}; the tabulation used here is the one reproduced in
\cite{Ropelength12}.}
\label{fig:numberline}
\end{figure}

\begin{remark}
The reader should treat the two solid marks in \cref{fig:numberline} with
the appropriate caution.  Values such as \(32.74\) and \(42.09\) are outputs
of constrained gradient descent \cite{ACPR}; they are convincing but are not
proofs, and no nontrivial knot type has a ropelength minimiser that is known
in closed form.  This is one reason why the subject is difficult, and also
one reason why the \emph{qualitative} structural questions studied in this
paper are worth asking: they can be settled without knowing any minimiser
explicitly.
\end{remark}

\subsection{Minimisers exist, and are only \texorpdfstring{$C^{1,1}$}{C11}}

\begin{theorem}[Cantarella--Kusner--Sullivan {\cite[Theorem 2.9]{CKS}}]
\label{thm:cks-existence}
In every knot or link type there is a ropelength minimiser, and every
minimiser is a \(C^{1,1}\) curve.  Minimisers need not be \(C^2\).
\end{theorem}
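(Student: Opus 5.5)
The plan is the direct method of the calculus of variations, working in the normalisation of \cref{conv:normalisation}. Fix a knot type \(K\) and a minimising sequence \(\gamma_n\) with \(\Thi(\gamma_n)\ge1\) and \(\Len(\gamma_n)\downarrow m_K\). By \cref{prop:unknot-2pi} and \cref{prop:farymilnor-bound}, \(m_K\) is finite and positive; it is finite because any smooth representative has positive thickness. I would parametrise each \(\gamma_n\) proportionally to arclength on \(S^1=\R/\Z\) and translate it so that \(\gamma_n(0)=0\). Then \(|\gamma_n'|=\Len(\gamma_n)\) is bounded.

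The curvature part of \cref{thm:thickness-formula} gives \(|\kappa|\le1\) almost everywhere. Hence the unit tangents \(T_n=\gamma_n'/|\gamma_n'|\) are uniformly Lipschitz, with constant at most \(\sup_n\Len(\gamma_n)\). By Arzel\`a--Ascoli I extract a subsequence converging in \(C^1\) to a curve \(\gamma\). This limit has constant speed \(\lim\Len(\gamma_n)=m_K\), a unit tangent that is Lipschitz with the same constant, and so \(\gamma\) is \(C^{1,1}\).

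Next I pass the constraints to the limit. Length converges because the speeds converge. Thickness is the step flagged in \cref{rem:three-point}. By the three-point formula \eqref{eq:three-point}, \(\Thi\ge1\) means \(R(x,y,z)\ge1\) for all distinct triples. Since \(R\) is continuous on triples of distinct points, uniform convergence gives \(R\ge1\) for distinct triples of points on \(\gamma\). A separate argument is needed to show that \(\gamma\) is embedded, so that the three points really are distinct points of an embedded curve. For this I would use the uniform local embeddedness: a thickness-\(1\) curve satisfies \(|\gamma_n(s)-\gamma_n(t)|\ge c\min(|s-t|,1)\) with \(c\) uniform, which follows from the tube of radius \(1\). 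This bound passes to the limit and excludes collapse. So \(\Thi(\gamma)\ge1\) and \(\Len(\gamma)=m_K\), giving \(\Rop(\gamma)\le m_K\).

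The main obstacle is to show that \([\gamma]=K\), that is, that the knot type survives the limit. I would argue by tube stability. Because \(\gamma\) has a normal tube of radius \(1\) and \(\gamma_n\to\gamma\) in \(C^1\), for large \(n\) the curve \(\gamma_n\) lies in the radius-\(\tfrac12\) tube of \(\gamma\). It is also transverse to the normal discs, since its tangents are close to those of \(\gamma\). Hence \(\gamma_n\) is a section of the normal disc bundle, and straight-line interpolation inside each normal disc gives an isotopy from \(\gamma_n\) to \(\gamma\). This proves \([\gamma]=K\), so \(\gamma\) is a minimiser, and it is \(C^{1,1}\) by construction.

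The same argument works verbatim for links, component by component: a lower bound on the component lengths follows from \(\kappa\le1\) together with Fenchel's theorem, so no component degenerates. For the final sentence, that minimisers need not be \(C^2\), I would cite a known example rather than reprove it. A standard one is the tight clasp or the Hopf-type link minimiser of \cite{CKS}, made of arcs and segments, whose curvature jumps between \(0\) and \(1\).
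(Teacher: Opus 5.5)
For a single knot your argument is correct. It is the standard direct method of \cite{CKS}. The paper does not reprove this statement, but its proof of \cref{thm:knot-sublevel-compact} runs the same mechanism: centred constant-speed representatives, $|\kappa|\le1$, Arzel\`a--Ascoli in $C^1$, closedness of $\Thi\ge1$ via \eqref{eq:three-point}, and stability of the knot type for $C^1$-close thick curves.

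The genuine gap is your claim that the argument works verbatim for links. A single translation pins down only one component. For a split link, a minimising sequence can have a split sublink drift off to infinity while lengths and thicknesses stay bounded; two unit circles moving apart already do this for the two-component unlink. This is exactly the non-compactness recorded in \cref{rem:one-component}. Arzel\`a--Ascoli then fails for the link as a whole. Re-centring the components separately throws away their relative position, so nothing controls the link type of the limit. Your appeal to Fenchel's theorem rules out a different degeneration, a component shrinking, and does not help here.

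The missing step is a reduction to non-split links. If $L$ is non-split, the projection of any configuration onto a line is connected, since a gap would give a separating plane. Hence the diameter is at most half the total length, and one translation suffices. Moreover, minimal ropelength is additive under split union. In any unit-thickness configuration of $L_1\sqcup L_2$, the components of $L_i$ form a configuration of type $L_i$ with thickness at least $1$. So minimisers of the non-split pieces, placed in disjoint balls at mutual distance at least $2$, give a minimiser of $L$. Equivalently, you can re-centre each drifting cluster separately and observe that far-apart clusters lie in disjoint balls, so $L$ is their split union.

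You also need that distinct components of a unit-thickness link are at distance at least $2$, because a closest pair spans a doubly critical chord. This passes to the limit and keeps the limit components disjoint. Your per-component chord--arc bound does not give it.

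Two smaller points. First, your construction shows only that the minimiser you build is $C^{1,1}$, not that every minimiser is. In \cite{CKS} that clause is the lemma that positive thickness forces a Lipschitz unit tangent with $|\kappa|\le1/\Thi$; in this paper it is built into \cref{def:thickness}. Second, your non-$C^2$ example should be replaced. The Hopf link has a minimiser made of two round circles, which are smooth. The tight clasp is a local open-arc problem, not a closed minimiser. The examples in \cite{CKS} are links in which one component is clasped by two others, such as a three-component chain, where that component is forced to be a stadium curve of circular arcs and straight segments.
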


Two consequences shape the whole theory.  First, the natural regularity
class is \(C^{1,1}\) and not \(C^\infty\); tight configurations have arcs of
curvature exactly \(1\) meeting straight segments, and the curvature jumps.
Second, there is no general uniqueness theorem for minimisers up to
congruence.  The minimum level set is therefore naturally treated as a
space, which we call the ideal stratum, and its topology is the subject of
\cref{sec:degreezero}.  Criticality theory for
ropelength, that is the first-order theory of this variational problem, was
developed in \cite{CFKS}; a symmetric criticality principle, showing that a
symmetric knot type has critical configurations with the same symmetry, is
proved in \cite{CEFM}, together with examples of knot types possessing
several non-congruent critical configurations.

\begin{remark}[Why several critical configurations matter here]
\label{rem:several-critical}
If ropelength had a unique critical point in each knot type, the geometry of
\cref{fig:landscape} would be a single basin and there would be nothing to
say.  The examples of \cite{CEFM} show that the landscape really does have
several critical points; whether distinct critical points can lie in
\emph{different connected components} of a sublevel is precisely the
Gordian-type question of \cref{sec:degreezero}.
\end{remark}

\subsection{The change of viewpoint}

Everything after this point rests on one reinterpretation, which we state
plainly because it is the conceptual content of the paper.

\begin{quote}
\emph{Fix the rope: work in the space of all configurations of thickness at
least one.  On this space, length is a function.  Then ropelength theory is
the study of the sublevel sets of that function, and classical ropelength
minimisation is only the study of its minimum.}
\end{quote}

The minimum \(m_K\) is one number; the sublevel sets form a filtration.  The
questions the filtration asks --- when do two basins merge, which families
of configurations fit under a given budget, when does the picture stabilise
--- are invisible to the minimum alone.

\section{Spaces of knots from scratch}
\label{sec:knotspaces}

This section fixes the topological models and states, without proof, what is
known about them.  A reader who wants more should consult
\cref{app:literature}.

\subsection{Four models}

Let \(M\) be \(\R^3\) or \(S^3\) with a fixed Riemannian metric.

\begin{enumerate}[label=\textup{(\arabic*)},leftmargin=2.4em]
\item \emph{Parameterised knots.} \(\Emb^\infty(S^1,M)\), the space of
smooth embeddings with the \(C^\infty\) topology.  Its set of path
components is the set of oriented knot types; a path is an isotopy.  Write
\(\calE_K(M)\) for the component of \(K\).
\item \emph{Unparameterised knots.}
\(\calU_K(M)=\calE_K(M)/\Diff^+(S^1)\), or
\(\mathscr K_K(M)=\Emb^\infty_K(S^1,M)/\Diff(S^1)\) in the unoriented case.
Since \(\Diff^+(S^1)\simeq S^1\), these differ from (1) by a circle's worth
of information, invisible to \(\pi_0\) but not to \(\pi_1\).
\item \emph{Long knots.} \(\calK\) is the space of smooth embeddings
\(f:\R\hookrightarrow\R^3\) agreeing with a fixed line outside a compact
interval.  Its component of \(f\) is written \(\calK_f\).
\item \emph{Shape space.}
\(Y_\infty(K)=\{\gamma:[\gamma]=K,\ \Thi(\gamma)\ge1\}/\!\sim\) as in
\eqref{eq:Y-intro}, with rigid motions and reparametrisations divided out.
This is the model in which length is a function and
\cref{sec:degreezero} takes place.
\end{enumerate}

\begin{figure}[t]
\centering
\begin{tikzpicture}[scale=1.0]
\begin{scope}
\draw[rounded corners=22pt,fill=ropesand,draw=ropegrey,line width=0.7pt]
  (-0.2,-0.2) rectangle (3.2,1.7);
\draw[rounded corners=18pt,fill=ropesand,draw=ropegrey,line width=0.7pt]
  (3.75,-0.05) rectangle (6.3,1.55);
\draw[rounded corners=14pt,fill=ropesand,draw=ropegrey,line width=0.7pt]
  (6.85,0.2) rectangle (8.7,1.3);
\node[font=\scriptsize] at (1.5,1.95) {$\calE_U$ (unknot)};
\node[font=\scriptsize] at (5.02,1.8) {$\calE_{3_1}$ (trefoil)};
\node[font=\scriptsize] at (7.8,1.55) {$\calE_{4_1}$};
\node[font=\scriptsize,ropegrey] at (10.0,0.75) {$\cdots$};
% a path inside one component
\draw[ropered,line width=1pt,->,>=Stealth]
  (4.15,0.55) .. controls (4.7,1.3) and (5.4,0.45) .. (5.95,1.05);
\filldraw[ropered] (4.15,0.55) circle (1.5pt);
\filldraw[ropered] (5.95,1.05) circle (1.5pt);
\node[ropered,font=\scriptsize] at (5.15,0.15) {isotopy};
% a loop
\draw[ropeblue,line width=1pt] (1.5,0.75) circle (0.62);
\node[ropeblue,font=\scriptsize] at (1.5,0.75) {loop};
\end{scope}
\end{tikzpicture}
\caption{The space of knots.  Classical knot theory computes the set of
components.  The subject of this paper is what happens \emph{inside} one
component: paths are isotopies, loops are cyclic motions of a knot, and
higher spheres are multi-parameter families.}
\label{fig:knotspace-cartoon}
\end{figure}

\subsection{The first interesting loop}

Let \(f\) be a long knot, agreeing with the \(x\)-axis outside a compact
interval, and let \(R_t\in SO(3)\) be the rotation by angle \(t\) about the
\(x\)-axis.  Then \(t\mapsto R_t\circ f\), \(t\in[0,2\pi]\), is a loop in
\(\calK_f\) based at \(f\): the knot is spun once about its own axis and
returns to its starting position.

\begin{figure}[t]
\centering
\begin{tikzpicture}[scale=0.78]
\foreach \i/\c/\lab in {0/1/0,1/0.5/60,2/-0.5/120,3/-1/180}{
  \begin{scope}[shift={(\i*3.55,0)}]
    \draw[ropegrey,line width=0.6pt,dashed] (-1.6,0) -- (1.6,0);
    \begin{scope}[yscale=\c]
      \draw[line width=5pt,ropesand,line cap=round]
        (-1.45,0) .. controls (-0.65,0) and (-0.55,0.8) .. (0,0.8)
                  .. controls (0.55,0.8) and (0.5,-0.45) .. (-0.2,-0.45)
                  .. controls (-0.85,-0.45) and (-0.5,0.45) .. (0.35,0.32)
                  .. controls (0.9,0.24) and (0.95,0) .. (1.45,0);
      \draw[line width=1.1pt,ropeblue,line cap=round]
        (-1.45,0) .. controls (-0.65,0) and (-0.55,0.8) .. (0,0.8)
                  .. controls (0.55,0.8) and (0.5,-0.45) .. (-0.2,-0.45)
                  .. controls (-0.85,-0.45) and (-0.5,0.45) .. (0.35,0.32)
                  .. controls (0.9,0.24) and (0.95,0) .. (1.45,0);
    \end{scope}
    \node[font=\scriptsize,below] at (0,-1.15) {$t=\lab^\circ$};
  \end{scope}}
\foreach \i in {0,1,2}{
  \draw[->,>=Stealth,ropered,line width=0.8pt]
    ({\i*3.55+1.75},0.15) arc (200:340:0.5);}
\end{tikzpicture}
\caption{The Gramain loop: spin a long knot once about its axis.  Every
member of this family is a rigid rotation of the first, so the whole loop
lies in a single level set of ropelength.  Gramain \cite{Gramain} proved
that for a nontrivial knot the loop is not null-homotopic; by
\cref{cor:gramain-width} it nevertheless costs no rope at all.}
\label{fig:gramain}
\end{figure}

\begin{theorem}[Gramain \cite{Gramain}]\label{thm:gramain}
For a nontrivial long knot \(f\), the class \([G_f]\in\pi_1(\calK_f,f)\) of
the rotation loop is nontrivial, and it remains nonzero in
\(H_1(\calK_f)\).
\end{theorem}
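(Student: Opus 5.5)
The aim is to show the Gramain loop \(G_f\) survives in \(H_1(\calK_f)\). Nontriviality in \(\pi_1\) then follows, because the Hurewicz map sends \([G_f]\) to its homology class. I plan to detect the class by a cohomology class on \(\calK_f\), namely a map to the circle.

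\textbf{Step 1: the complement map.} I pass from long knots to knot complements. For a long knot \(g\in\calK_f\), compactify \(\R^3\) to \(S^3\), so that \(g\) closes up to a knot through \(\infty\) with a marked meridian at infinity. The complement \(C_g=S^3\setminus N(g)\) comes with a canonical meridian--longitude framing of the boundary torus near \(\infty\), since \(g\) agrees with the \(x\)-axis there. Any isotopy in \(\calK_f\) fixes a neighbourhood of \(\infty\) pointwise and so extends to an ambient isotopy. Therefore a loop in \(\calK_f\) induces a diffeomorphism of \(C_f\) that is the identity near the \(\infty\)-end of \(\partial C_f\). This gives a homomorphism
\[
\pi_1(\calK_f,f)\to \pi_0\Diff(C_f\ \mathrm{rel}\ \text{part of }\partial),
\]
and hence an action on \(\pi_1(C_f)\) that fixes the meridian \(m\) at the basepoint near \(\infty\).

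\textbf{Step 2: computing the image of \(G_f\).} Spinning about the axis drags the basepoint around the longitude direction. Unwinding this, the monodromy of \(G_f\) acts on \(\pi_1(C_f)\) as conjugation by the longitude \(\lambda\), up to inner automorphisms fixing \(m\). I have to check that this is not inner in the relevant relative sense. That amounts to showing that \(\lambda\) is not a power of \(m\) times a central element. For a nontrivial knot, \(\lambda\) is nontrivial and not a power of \(m\), because the peripheral subgroup \(\Z^2\) injects (Dehn's lemma and the loop theorem). The centraliser of \(m\) is then the peripheral subgroup when \(K\) is not a torus knot, and the torus-knot case is handled directly. Together these show that conjugation by \(\lambda\) is a nontrivial outer automorphism relative to \(m\). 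This gives nontriviality in \(\pi_1\).

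\textbf{Step 3: the homology statement.} To get \(H_1\), I need an abelian invariant. The plan is to factor the monodromy homomorphism through an abelian group: the group of automorphisms of the pair \((\pi_1 C_f, m)\) modulo inner ones, restricted to the relevant subgroup. On that subgroup, conjugation by peripheral elements generates a copy of \(\Z\). Composing gives a homomorphism \(\pi_1(\calK_f)\to\Z\) that sends \([G_f]\) to \(1\). Since the target is abelian, this homomorphism factors through \(H_1(\calK_f)\), so \([G_f]\neq0\) there.

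The main obstacle is this last factorisation. In general the monodromy group is not abelian, so I need to identify a quotient on which \([G_f]\) has infinite order and which is \(\Z\). My first attempt is the \emph{rotation number} of the boundary-torus framing, equivalently the map \(\calK_f\to S^1\) that measures the framing twist near the fixed end. If that turns out to be ill-defined, the fallback is Hatcher's description of \(\calK_f\) for prime knots as \(S^1\times\) (something), in which \(G_f\) generates the \(S^1\) factor.
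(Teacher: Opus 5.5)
The paper gives no proof of this statement; it is quoted from Gramain. So I am comparing your proposal with the standard argument, and there is a genuine gap already in Step~2.

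\textbf{Step 2: wrong monodromy, and an invariant that cannot detect it.} Rotation about the axis does not drag anything in the longitude direction. Near \(\infty\), where the knot is the axis, it fixes the axis and moves every other point around a circle linking it, i.e.\ meridionally. Concretely, realise \(G_f\) by the cut-off rotation \(\Phi_t(x)=R_{t\varphi(|x|)}(x)\). Here \(\varphi\equiv1\) on a ball containing the knotted arc and \(\varphi\equiv0\) outside a larger ball. Since the knot is the axis wherever \(0<\varphi<1\), we have \(\Phi_t\circ f=R_t\circ f\).

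The monodromy \(h=\Phi_{2\pi}|_{C_f}\) is the identity on the inner ball and near \(\infty\), and a full twist about the axis in the shell. Take \(*\) near \(\infty\). Each passage of a based loop through the shell is replaced by a spiral once around the axis, so \(h_*\) is conjugation by the \emph{meridian} \(\mu\), not by \(\lambda\). A longitudinal twist would come from sliding the knot along itself, not from rotating it.

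More seriously, the invariant you then use cannot see this class at all. Both \(\mu\) and \(\lambda\) lie in the abelian peripheral subgroup, so conjugation by either is an inner automorphism fixing \(m\). It therefore vanishes in ``automorphisms modulo inner automorphisms fixing \(m\)''.

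Your criterion \(\lambda\notin\langle m\rangle\cdot Z(G)\) refers to yet another quotient. Even there it fails for a \((p,q)\) torus knot, where \(m^{pq}\lambda\) (with suitable orientations) generates the centre. So the torus case cannot be ``handled directly''. Separately, the centraliser of \(m\) is not the peripheral subgroup for composite knots.

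The repair is to take no quotient at all. Your Step~1 already gives an honest homomorphism \(\pi_1(\calK_f,f)\to\operatorname{Aut}\pi_1(C_f,*)\): it is the monodromy of the bundle of complements with the section \(*\). This homomorphism sends \([G_f]\) to \(c_\mu\), and \(c_\mu\ne\id\). Indeed, \(\mu\) normally generates the knot group, so a central \(\mu\) would force the group to be \(\Z\), and then the knot is trivial by Dehn's lemma.

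\textbf{Step 3: the homology statement is not carried out.}
\begin{itemize}
\item The image of the monodromy in \(\operatorname{Aut}\pi_1(C_f,*)\) is non-abelian in general. You produce no homomorphism from all of \(\pi_1(\calK_f,f)\) to an abelian group.
\item The class to be detected is the \emph{inner} automorphism \(c_\mu\), so no invariant factoring through outer automorphism groups can work.
\item Your ``rotation number of the framing near the fixed end'' is constant on \(\calK_f\), since every long knot coincides with the axis there.
\item The fallback to Hatcher--Budney, as you state it, covers only prime knots; composite knots have \(\calK_f\simeq C_n(\R^2)\times_{\Sigma_f}\prod_i\calK_{f_i}\).
\item Even for prime knots you would still have to show that \([G_f]\) generates a circle factor that splits off in \(H_1\). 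That identification rests on the Smale conjecture and the JSJ analysis of \(\pi_0\Diff(C_f\ \mathrm{rel}\ \partial)\), and it is essentially the whole content of the \(H_1\) claim.
\end{itemize}

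So, as written, neither the \(\pi_1\) statement nor the \(H_1\) statement is proved. The first is repaired as above; the second still needs a genuine abelian invariant defined on the whole of \(\pi_1(\calK_f,f)\).
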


\subsection{What the terminal spaces look like}

The homotopy type of a knot-space component in dimension \(3\) is known in
many cases; this is a body of work resting on Hatcher's proof of the Smale
conjecture and on the geometrisation of \(3\)-manifolds.  We record what we
use.

\begin{theorem}[Hatcher \cite{HatcherSmale}]\label{thm:unknot-space}
The space of unknotted smooth circles in \(S^3\) deformation retracts onto
the space of great circles, which is the Grassmannian
\(\mathrm{Gr}_2(\R^4)\) of \(2\)-planes through the origin in \(\R^4\).
\end{theorem}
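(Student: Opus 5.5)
The plan is the standard reduction to the Smale conjecture, run through an isotopy-extension fibration. Fix a great circle \(C_0\subset S^3\), and regard the space of unknotted (unparametrised, unoriented) smooth circles as the quotient \(\mathscr K_U(S^3)\) of \cref{sec:knotspaces}. The group \(\Diff(S^3)\) acts on \(\mathscr K_U(S^3)\) by \(g\cdot C=g(C)\).
\begin{enumerate}[label=\textup{(\roman*)}]
\item \emph{Transitivity.} The action is transitive. This is just the definition of unknottedness together with the fact that orientation-reversing diffeomorphisms are available.
\item \emph{Fibration.} By the Cerf--Palais isotopy extension theorem, the orbit map \(g\mapsto g(C_0)\) is a locally trivial fibre bundle
\[
  \Diff(S^3,C_0)\longrightarrow \Diff(S^3)\longrightarrow \mathscr K_U(S^3),
\]
whose fibre \(\Diff(S^3,C_0)\) is the group of diffeomorphisms preserving \(C_0\) setwise.
\item \emph{The isometric sub-bundle.} Restricting to isometries gives the sub-fibration
\[
  \Isom(S^3,C_0)\to O(4)\to \mathrm{Gr}_2(\R^4),
\]
with \(\Isom(S^3,C_0)\cong O(2)\times O(2)\rtimes\Z/2\) the stabiliser of a \(2\)-plane (or, depending on conventions, the stabiliser of the pair consisting of the plane and its complement). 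The inclusion of the space of great circles is then a map of bundles.
\end{enumerate}

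By the five lemma on the long exact sequences of homotopy groups, it suffices to prove two statements.
\begin{enumerate}[label=\textup{(\alph*)}]
\item The inclusion \(O(4)\hookrightarrow\Diff(S^3)\) is a homotopy equivalence. This is Hatcher's Smale conjecture, which I take as the black-box input.
\item The inclusion \(\Isom(S^3,C_0)\hookrightarrow\Diff(S^3,C_0)\) is a homotopy equivalence.
\end{enumerate}
For (b), I would fibre \(\Diff(S^3,C_0)\) over the diffeomorphisms of \(C_0\), and then over the induced linear action on the normal bundle. This uses uniqueness of tubular neighbourhoods: the space of tubular neighbourhoods is contractible modulo the normal linear group. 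The question thereby reduces to diffeomorphisms of \(S^3\) fixing a closed tubular neighbourhood \(N(C_0)\) pointwise, that is, to \(\Diff(V,\partial V)\) for the complementary solid torus \(V=S^3\setminus \operatorname{int}N(C_0)\). Contractibility of \(\Diff(D^2\times S^1\ \mathrm{rel}\ \partial)\) follows from the Smale conjecture in its disc form \(\Diff(D^3\ \mathrm{rel}\ \partial)\simeq*\). The argument cuts \(V\) along a meridian disc, using the contractibility of the space of meridian discs (Hatcher's results on incompressible surfaces) together with Smale's theorem \(\Diff(D^2\ \mathrm{rel}\ \partial)\simeq*\). Combining this with the finite-dimensional pieces \(\Diff(S^1)\simeq O(2)\) and the normal \(O(2)\) identifies \(\Diff(S^3,C_0)\) with its isometric subgroup.

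Two further points finish the argument.
\begin{itemize}
\item \emph{Weak equivalence.} The five lemma now shows that \(\mathrm{Gr}_2(\R^4)\hookrightarrow\mathscr K_U(S^3)\) is a weak homotopy equivalence.
\item \emph{Upgrading to a deformation retraction.} The space \(\mathscr K_U(S^3)\) is a Fréchet manifold, hence has the homotopy type of a CW complex by Palais. By Whitehead's theorem the inclusion is therefore a homotopy equivalence. Since it is a closed cofibration (the Grassmannian is a smooth finite-dimensional submanifold, hence a neighbourhood deformation retract), it is a strong deformation retraction.
\end{itemize}

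The main obstacle is (b), specifically the contractibility of the space of meridian discs in the solid torus and the appeal to \(\Diff(D^3\ \mathrm{rel}\ \partial)\simeq*\). This is where all of the hard \(3\)-manifold topology enters. Everything else is bundle bookkeeping and point-set justification of the fibrations. I would not attempt to reprove this part, but cite Hatcher \cite{HatcherSmale} for both the disc form of the Smale conjecture and the disc-space contractibility.
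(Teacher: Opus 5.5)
Your outline is correct. The paper gives no argument here beyond attributing the statement to Hatcher \cite{HatcherSmale}. What you describe is the standard derivation of this equivalent form of the Smale conjecture from \(\Diff(S^3)\simeq O(4)\) and \(\Diff(S^1\times D^2\ \mathrm{rel}\ \partial)\simeq\ast\), so in substance you are unpacking the same citation. You have also correctly identified where the hard input lies: the disc form of the Smale conjecture and the contractibility of the space of meridian discs.

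Two details need correcting.

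\emph{The stabiliser.} The stabiliser of \(C_0=S^3\cap P\) in \(O(4)\) is exactly \(O(P)\times O(P^\perp)\cong O(2)\times O(2)\), whatever the convention. An element exchanging \(P\) and \(P^\perp\) carries \(C_0\) to the other component of a Hopf link, so it does not stabilise \(C_0\). Moreover \(O(4)/(O(2)\times O(2)\rtimes\Z/2)\) is the quotient of \(\mathrm{Gr}_2(\R^4)\) by the free involution \(P\mapsto P^\perp\), not \(\mathrm{Gr}_2(\R^4)\) itself. Your own count in (b), \(\Diff(S^1)\simeq O(2)\) together with a normal \(O(2)\), already gives \(O(2)\times O(2)\).

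\emph{The normal-derivative base.} The space of normal-bundle automorphisms of \(C_0\) covering the identity is \(\mathrm{Map}(S^1,\mathrm{GL}_2(\R))\). This has \(\Z\)-many components in each determinant class, so it is not \(\simeq O(2)\). The base of that fibration is \(\simeq O(2)\) only because a diffeomorphism of \(S^3\) fixing \(C_0\) pointwise must have normal derivative of winding number zero. The reason is that it preserves the linking number of \(C_0\) with its push-off along a constant field in \(P^\perp\), and that linking number is zero. You should state this and restrict the fibration to the winding-zero components.

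With these fixes, the five-lemma, Whitehead and cofibration steps go through. For the cofibration, the cleanest justification is that both spaces are metrizable ANRs, so Borsuk's homotopy extension theorem applies. This avoids any appeal to tubular neighbourhoods inside a Fr\'echet manifold.
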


\begin{theorem}[Hatcher {\cite{HatcherSpaces,HatcherModuli}}]
\label{thm:hatcher-models}
Let \(K\subset S^3\) be a knot and \(\mathscr K_K(S^3)\) its knot-space
component.
\begin{enumerate}[label=\textup{(\roman*)},leftmargin=2.6em]
\item If \(K\) is a nontrivial torus knot, \(\mathscr K_K(S^3)\) has the
homotopy type of \(SO(4)/G_K\) for a subgroup \(G_K\) containing a circle,
namely the stabiliser of a standard placement of \(K\) on a Clifford torus.
\item If \(K\) is hyperbolic, \(\mathscr K_K(S^3)\) has the homotopy type of
\(SO(4)/\Gamma_K\), where \(\Gamma_K\) is the finite group of
orientation-preserving isometries of the complete hyperbolic structure on
\(S^3\setminus K\).
\item For satellite knots, Hatcher's model has the form
\((SO(4)\times X_K)/\Gamma_K\) with a homotopy equivalence to
\(\mathscr K_K(S^3)\).  The auxiliary space \(X_K\) records the non-rigid
satellite parameters: in nested cases it contains torus factors, while in
general the construction also involves configuration space factors.  Its
precise form is determined by the satellite/JSJ structure.
\end{enumerate}
\end{theorem}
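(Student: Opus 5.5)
The statement is a cited theorem of Hatcher, so the plan is to reconstruct the outline of Hatcher's argument from the Smale conjecture and geometrisation, not to prove it from scratch.

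\textbf{Step 1: reduce to a fibration.} Fix a basepoint knot \(K\) and consider the restriction map \(\Diff(S^3)\to\mathscr K_K(S^3)\), \(\phi\mapsto\phi(K)\). By the isotopy extension theorem (Palais--Cerf) this map is a locally trivial fibration onto the component, with fibre \(\Diff(S^3,K)\), the diffeomorphisms preserving \(K\) setwise. Hatcher's Smale conjecture gives \(\Diff(S^3)\simeq O(4)\), or \(\Diff^+(S^3)\simeq SO(4)\) after restricting to orientation-preserving maps. Hence \(\mathscr K_K(S^3)\simeq\Diff^+(S^3)/\Diff^+(S^3,K)\), and the problem becomes identifying the homotopy type of the stabiliser.

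\textbf{Step 2: pass to the exterior.} Restricting to a tubular neighbourhood and then to the exterior \(E=S^3\setminus \mathring N(K)\) relates \(\Diff^+(S^3,K)\) to \(\Diff(E)\). The fibres of this relation are spaces of diffeomorphisms of a solid torus fixing the boundary, plus framing data. These are controlled by the Smale conjecture for the ball and by Hatcher's result \(\Diff(S^1\times D^2\ \mathrm{rel}\ \partial)\simeq *\).

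\textbf{Step 3: identify \(\Diff(E)\) in each case.} The exterior \(E\) is Haken, so \(\Diff(E)\) has contractible components modulo the isometry or Seifert-structure symmetries.
\begin{enumerate}[label=\textup{(\roman*)},leftmargin=2.6em]
\item In the hyperbolic case, Gabai and Hatcher give \(\Diff(E)\simeq\Isom(E)=\Gamma_K\), a finite group. Tracking the fibration shows that the stabiliser is homotopy equivalent to \(\Gamma_K\), realised by isometries of \(S^3\) after Mostow rigidity and symmetrisation. This yields \(SO(4)/\Gamma_K\).
\item In the torus-knot case, \(E\) is Seifert fibred, and its diffeomorphism group has identity component \(\simeq S^1\), the fibre rotation. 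Placing \(K\) on a Clifford torus realises this symmetry group as the subgroup \(G_K\subset SO(4)\) containing that circle.
\item For satellites, the JSJ decomposition introduces the extra torus and configuration-space factors; these assemble into \(X_K\) with a \(\Gamma_K\) action.
\end{enumerate}

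\textbf{Main obstacle.} The hard part is showing that the inclusion of the geometric symmetry group into \(\Diff^+(S^3,K)\) is a homotopy equivalence, which is where the Smale conjecture and Haken-manifold rigidity are indispensable. I would simply invoke \cite{HatcherSpaces,HatcherModuli} at that point.
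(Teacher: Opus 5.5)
Your outline follows the standard route behind the citation; the paper gives no proof of its own, only \cite{HatcherSpaces,HatcherModuli} together with \cref{rem:linearisation}. Steps~1--2 and the torus-knot case are broadly right.

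\textbf{The gap.} The hyperbolic step in which \(\Gamma_K\) is ``realised by isometries of \(S^3\) after Mostow rigidity and symmetrisation'' is not justified. Mostow--Prasad rigidity only makes \(\Gamma_K\) act by isometries of the \emph{hyperbolic} metric on \(S^3\setminus K\). Extending over \(K\) gives a finite smooth action on \((S^3,K)\), but nothing in your argument makes that action conjugate into \(SO(4)\).

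Without this, the fibration of Step~1 yields only \(\mathscr K_K(S^3)\simeq\Diff^+(S^3)/\Gamma\) for a finite, possibly nonlinear, subgroup \(\Gamma\subset\Diff^+(S^3)\). Equivalently, it gives the homotopy fibre of \(B\Gamma\to B\Diff^+(S^3)\simeq BSO(4)\). Identifying this with \(SO(4)/\Gamma_K\) requires the action to be conjugate to an orthogonal one. So does having the equivalence induced by the \(SO(4)\)-orbit map of a symmetric placement, which the paper needs in \cref{cor:vanishing-torus-hyperbolic}.

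That requirement is exactly the linearisation hypothesis for finite group actions on \(S^3\). Hatcher's hyperbolic and satellite statements were conditional on it, and it is now a theorem of Dinkelbach--Leeb \cite{DinkelbachLeeb}. So ``simply invoking'' Hatcher gives only conditional versions of (ii) and (iii); you must add the equivariant geometrisation input explicitly.

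The torus case is unaffected, because the Clifford-torus placement exhibits \(G_K\subset SO(4)\) directly. Remember, though, that \(G_K\) contains the involution \((z,w)\mapsto(\bar z,\bar w)\) as well as the circle. You need it for \(\pi_0\) to match.

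\textbf{Two smaller points.}
\begin{itemize}
\item Gabai's theorem is the Smale conjecture for \emph{closed} hyperbolic \(3\)-manifolds, so it is not what identifies \(\Diff(E)\) for the bounded exterior. There, \(\Diff_0(E)\simeq\ast\) comes from Hatcher's theorem for Haken manifolds with centreless \(\pi_1\). The identification \(\pi_0\Diff(E)\cong\Isom(S^3\setminus K)\) comes from Waldhausen's theorem together with Mostow--Prasad rigidity.
\item Passing from \(\Diff(E)\) back to \(\Diff^+(S^3,K)\) needs every self-diffeomorphism of \(E\) to preserve the meridian slope. This is the Gordon--Luecke theorem, not a formal step. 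It is what allows \(\Gamma_K\) to be the \emph{full} orientation-preserving isometry group of the complement, as in the statement.
\end{itemize}
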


\begin{remark}\label{rem:linearisation}
Hatcher's statements in the hyperbolic and satellite cases were formulated
modulo a linearisation hypothesis for finite group actions on \(S^3\); that
hypothesis is now a theorem, by the equivariant geometrisation results of
Dinkelbach and Leeb \cite{DinkelbachLeeb}.  The essential point for us is
structural and is stated separately as \cref{hyp:orbit-retract}: in cases
(i) and (ii) the whole knot space retracts onto a \emph{single orbit} of the
isometry group of \(S^3\), whereas in case (iii) there are extra \(X_K\)-directions that are not
ambient isometry directions.
\end{remark}

\begin{theorem}[Hatcher, Budney \cite{HatcherSpaces,BudneyTopology}]
\label{thm:budney-long}
Every component of the space \(\calK\) of long knots is aspherical, and
\[
  \calK_f\simeq
  \begin{cases}
   \ast,&f\ \text{unknotted},\\
   S^1,&f\ \text{a torus knot},\\
   S^1\times S^1,&f\ \text{hyperbolic}.
  \end{cases}
\]
For general \(f\) the component is described recursively by fibre bundles
built from the JSJ decomposition of the complement, and \(\calK\) is a free
little \(2\)-cubes object on the space of long prime knots
\cite{BudneyLittleCubes}.
\end{theorem}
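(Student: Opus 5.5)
The proof would go through diffeomorphism groups of knot complements rather than through the orbit models of \cref{thm:hatcher-models}; this is the route of Hatcher and Budney, and I only reconstruct its skeleton here. Compactify a long knot \(f\) to an arc in a ball \(D^3\) with fixed endpoints on \(\partial D^3\). It is convenient to first pass to \emph{framed} long knots, meaning a long knot together with a tubular neighbourhood \(N(f)\) standard near infinity. Let \(C_f=D^3\setminus\operatorname{int}N(f)\) be the complement.

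The first step is the fibration
\[
  \Diff(C_f\ \mathrm{rel}\ \partial)\longrightarrow\Diff(D^3\ \mathrm{rel}\ \partial)\longrightarrow \calK^{\mathrm{fr}}_f ,
\]
obtained by letting diffeomorphisms act on the standard framed arc. Surjectivity up to homotopy and local triviality come from isotopy extension. By Hatcher's proof of the Smale conjecture \cite{HatcherSmale}, \(\Diff(D^3\ \mathrm{rel}\ \partial)\) is contractible. Hence \(\calK^{\mathrm{fr}}_f\simeq B\Diff(C_f\ \mathrm{rel}\ \partial)\).

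The second step uses that \(C_f\) is Haken with nonempty boundary. By Hatcher's theorem on diffeomorphism groups of Haken manifolds (completing Waldhausen), every component of \(\Diff(C_f\ \mathrm{rel}\ \partial)\) is contractible. Thus \(\calK^{\mathrm{fr}}_f\) is a \(K(\pi,1)\) with \(\pi=\pi_0\Diff(C_f\ \mathrm{rel}\ \partial)\). To remove the framing, use the splitting \(\calK^{\mathrm{fr}}_f\simeq\calK_f\times\Z\): the framing space is \(\Omega SO(2)\simeq\Z\), and the framing number is changed by a product with a twisting loop. This shows \(\calK_f\) is aspherical, with fundamental group equal to the mapping class group modulo the boundary-parallel framing twist.

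The third step computes this mapping class group case by case.
\begin{itemize}
\item \textbf{Unknot.} \(C_f\) is a solid torus relative to its boundary. Its mapping class group is generated by the meridional twist, which is exactly the framing change, so \(\calK_f\simeq\ast\).
\item \textbf{Torus knot.} \(C_f\) is Seifert fibred with a unique fibration. The mapping classes rel boundary are twists along the boundary torus, in the meridian and fibre directions. After quotienting by the framing twist, one copy of \(\Z\) survives, giving \(\calK_f\simeq S^1\). Its generator is the Gramain loop, consistent with \cref{thm:gramain}.
\item \textbf{Hyperbolic knot.} Mostow--Prasad rigidity forces any diffeomorphism that is the identity on the boundary to be isotopic, rel boundary, to a composite of Dehn twists along a boundary-parallel torus. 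The isotopy is taken in the group of diffeomorphisms fixing the boundary pointwise, not merely isometries. These twists form \(\Z^2\), which gives \(\calK_f\simeq S^1\times S^1\). The framing count must be checked, since the twist group of \(T^2\times I\) rel boundary has rank two and the framing twist is one of its generators.
\item \textbf{General \(f\).} Cut \(C_f\) along its JSJ tori. Restriction to pieces gives fibrations whose bases are spaces of embeddings of the JSJ tori and whose fibres are the rel-boundary diffeomorphism groups of the pieces. These fibrations are the recursive description in the statement. The little \(2\)-cubes structure is Budney's separate construction, obtained by letting long knots in a thickened cylinder pass through one another \cite{BudneyLittleCubes}. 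Freeness on prime knots then follows by matching the operad structure with the connected-sum JSJ pieces, which are keychain (composing-space) pieces.
\end{itemize}

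I expect the main obstacle to be the \(\pi_0\) bookkeeping in the torus and hyperbolic cases. One must decide exactly which boundary twists extend and which are trivial rel boundary, and must check that the framing \(\Z\) is split off correctly, so that the rank comes out as \(1\) and \(2\) rather than \(2\) and \(3\). I would handle this by computing the action on \(H_1(\partial C_f)\) and on a relative arc. Contractibility of components (Hatcher) and the Smale conjecture I would simply import, since they are not reprovable in this setting.
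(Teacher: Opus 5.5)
The paper gives no proof of this statement; it is quoted from Hatcher and Budney. Your first two steps are their route: the Smale-conjecture fibration, and Hatcher's theorem that \(\Diff(C_f\ \mathrm{rel}\ \partial)\) has contractible components. Those steps do give asphericity of every component.

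The gap is the framing step, and it propagates into every case of your third step. Since \(\Diff(D^3\ \mathrm{rel}\ \partial)\) is connected, its orbit through the standard framed arc is a single component of \(\calK^{\mathrm{fr}}\). The framing fibre \(\Omega SO(2)\) is homotopy discrete and framing number has no monodromy, so that component maps by a weak equivalence onto \(\calK_f\). Hence \(\calK_f\simeq B\Diff(C_f\ \mathrm{rel}\ \partial)\) and \(\pi_1\calK_f\cong\pi_0\Diff(C_f\ \mathrm{rel}\ \partial)\) on the nose: there is nothing to quotient out. (Your assertions \(\calK^{\mathrm{fr}}_f\simeq B\Diff(C_f\ \mathrm{rel}\ \partial)\) and \(\calK^{\mathrm{fr}}_f\simeq\calK_f\times\Z\) cannot both hold, since a classifying space is connected.)

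Nor is a framing change an element of this mapping class group. On \(\partial C_f\) it would restrict to the Dehn twist along the meridian \(\mu\), which sends the longitude \(\lambda\) to \(\lambda\pm\mu\) and so cannot extend over \(C_f\). Your case analysis therefore runs on compensating errors:
\begin{itemize}
\item For the unknot, \(\pi_0\Diff(S^1\times D^2\ \mathrm{rel}\ \partial)\) is trivial, not generated by a framing twist.
\item For a torus knot, the twist along the fibre slope is already trivial rel boundary, so the group is \(\Z\) before any quotient.
\item Your hyperbolic bullet contradicts itself: dividing \(\Z^2\) by a generator would give \(S^1\), not \(S^1\times S^1\).
\end{itemize}

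What actually computes the three cases is the exact sequence of the restriction fibration \(\Diff(C\ \mathrm{rel}\ \partial)\to\Diff(C)\to\Diff(\partial C)\), with \(C=C_f\):
\[
  \pi_1\Diff(C)\longrightarrow\pi_1\Diff(\partial C)\cong\Z^2
  \longrightarrow\pi_0\Diff(C\ \mathrm{rel}\ \partial)
  \longrightarrow\ker\bigl[\pi_0\Diff(C)\to\pi_0\Diff(\partial C)\bigr]
  \longrightarrow 1 .
\]
The identity component of \(\Diff(C)\) is:
\begin{itemize}
\item \(\simeq T^2\) for the solid torus;
\item \(\simeq S^1\) for a torus-knot exterior, namely the Seifert circle action, whose orbits on \(\partial C\) have the fibre slope;
\item contractible for a hyperbolic exterior.
\end{itemize}
So the first map has image of rank \(2,1,0\), and the boundary twists contribute rank \(0,1,2\). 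That is where \(\ast\), \(S^1\), \(S^1\times S^1\) come from.

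In the first two cases the kernel on the right is trivial. In the hyperbolic case it is the cyclic group of symmetries of \(K\) acting on the peripheral torus by translations, which can be nontrivial (the figure-eight knot is \(2\)-periodic). So your Mostow--Prasad claim that every class rel boundary is a product of boundary twists is false in general. The group is nevertheless \(\Z^2\), being a torsion-free central extension of a finite cyclic group by \(\Z^2\). The twists, however, may only have finite index in it.
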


\begin{table}[htbp]
\small
\begin{tabular}{@{}lll@{}}
\toprule
knot type & \(\mathscr K_K(S^3)\) & \(\calK_f\) (long)\\
\midrule
unknot & \(\simeq\mathrm{Gr}_2(\R^4)\) & \(\simeq\ast\)\\
torus knot & \(\simeq SO(4)/G_K\), \(\dim=5\) & \(\simeq S^1\)\\
hyperbolic knot & \(\simeq SO(4)/\Gamma_K\), \(\Gamma_K\) finite &
  \(\simeq S^1\times S^1\)\\
satellite knot & \(\simeq (SO(4)\times X_K)/\Gamma_K\), \(X_K\) from JSJ data &
  bundle-theoretic \\
\bottomrule
\end{tabular}
\caption{The terminal homotopy types, from
\cref{thm:unknot-space,thm:hatcher-models,thm:budney-long}.  These are the
\(\Lambda\to\infty\) limits of everything computed in this paper.}
\label{tab:terminal}
\end{table}

\subsection{Why filter?}

\Cref{tab:terminal} is a complete answer to one question and no answer at
all to another.  It says exactly which motions of a knot exist; it says
nothing about which motions are available to a rope of positive thickness
carrying a fixed amount of slack.  A hyperbolic knot has a torus worth of
motions, but a hyperbolic knot tied in a rope only barely long enough to
carry it may have no room to perform them.

Filtering by ropelength is the minimal way to ask the second question while
keeping the first one as a limit.  Everything below is organised by the
following slogan, which \cref{sec:filtration} makes into a theorem:
\emph{finite levels see geometry, the limit sees topology, and nothing is
lost in passing to the limit.}

\section{The ropelength filtration and compact capture}
\label{sec:filtration}

\subsection{Sublevels}

\begin{definition}[Ropelength sublevels]\label{def:sublevels}
For a knot type \(K\) in \(M\) and \(\Lambda>0\), set
\[
  \calE_{K,\Lambda}(M)=\{\gamma\in\calE_K(M):\Rop(\gamma)\le\Lambda\},
  \qquad
  \calE_{K,<\Lambda}(M)=\{\gamma:\Rop(\gamma)<\Lambda\}.
\]
The images in \(\calU_K(M)\) and \(\mathscr K_K(M)\) are denoted
\(\calU_{K,\Lambda}(M)\) and \(\mathscr K_{K,\Lambda}(M)\).  We write
\(\wh\calE_{K,\Lambda}(M)\) for the corresponding space of \(C^{1,1}\)
embeddings of positive thickness, topologised by the norm
\(\|u\|_{C^{1,1}}=\|u\|_{C^0}+\|\nabla u\|_{C^0}+\operatorname{Lip}(\nabla
u)\) in normal graph charts.
\end{definition}

Here, for a general Riemannian \((M,g)\), the thickness \(\Thi_g(\gamma)\)
is the normal injectivity radius of \(\gamma\), that is the supremum of the
radii for which the normal exponential map is an embedding, and
\(\Rop_g(\gamma)=\Len_g(\gamma)/\Thi_g(\gamma)\); for \(M=\R^3\) this is
\cref{def:thickness,def:ropelength}.  We write \(\Rop_{S^3}\) when \(M=S^3\)
carries the round metric of radius one, and drop the subscript when the
metric is clear.

As sets, \(\calE_K(M)=\bigcup_{\Lambda>0}\calE_{K,\Lambda}(M)\), simply
because every smooth embedding has positive thickness and finite length.
The content of this section is that the union is exhaustive in a much
stronger sense.

\begin{remark}[Which model, and why it matters]
\label{rem:model-choice}
The distinction between \(\calE_K\), \(\calU_K\) and a quotient by ambient
isometries is invisible to \(\pi_0\) and essential in higher degree.  If a
group \(G\) acts on \(M\) by isometries, there are three constructions:
\[
  \calU_{K,\Lambda}(M),\qquad
  \calU_{K,\Lambda}(M)/G,\qquad
  EG\times_G\calU_{K,\Lambda}(M).
\]
The first keeps ambient motion, the second discards it, the third keeps it
in the form of isotropy data.  Since the symmetric placements in
\cref{thm:hatcher-models} have large stabilisers, the naive quotient
destroys exactly what one wants to see, and we use the Borel construction
whenever equivariance is relevant.
\end{remark}

\subsection{Local tubular stability}

The only geometric input in this section is the following lemma, and the
only tool in its proof is the tubular neighbourhood theorem.

\begin{lemma}[Local tubular stability]\label{lem:local-tubular-stability}
Let \(\gamma\in\Emb^\infty(S^1,M)\).  There exist a neighbourhood
\(V_\gamma\) of \(\gamma\) in \(\Emb^\infty(S^1,M)\) and constants
\(r_\gamma>0\), \(L_\gamma<\infty\) with
\[
  \Thi(\eta)\ge r_\gamma
  \quad\text{and}\quad
  \Len(\eta)\le L_\gamma
  \qquad\text{for all }\eta\in V_\gamma .
\]
In particular \(\Rop\) is locally bounded on \(\Emb^\infty(S^1,M)\).
\end{lemma}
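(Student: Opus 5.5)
The plan is to treat the two bounds separately, since they are of different difficulty. The length bound is immediate from the topology. The \(C^\infty\) topology is finer than the \(C^1\) topology, so we may take \(V_\gamma\) inside a \(C^1\)-ball \(\{\eta:\|\eta-\gamma\|_{C^1}<\delta\}\), with distances measured in a fixed finite atlas, or via a Nash embedding \(M\subset\R^N\). On such a ball \(|\eta'|\le|\gamma'|+\delta\). Hence \(\Len(\eta)\le\Len(\gamma)+2\pi\delta=:L_\gamma\) in the Euclidean case, and up to a metric-comparison constant in general. All the work is therefore in the thickness bound.

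For thickness, fix \(r_0\) less than the normal injectivity radius \(\Thi(\gamma)\) and let \(\nu\gamma\) be the normal bundle. The map \(E_\gamma(s,v)=\exp_{\gamma(s)}v\) is then a diffeomorphism from the closed \(2r_0\)-disc bundle \(D_{2r_0}\nu\gamma\) onto a tubular neighbourhood \(T\). This is the tubular neighbourhood theorem, shrunk slightly so that the closed disc bundle is compact.

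For \(\eta\) that is \(C^2\)-close to \(\gamma\), I would form the analogous map \(E_\eta\) on \(D_{r}\nu\eta\) and show it is an embedding for a uniform \(r=r_\gamma\). To compare \(E_\eta\) with \(E_\gamma\), I would pull both back to the fixed bundle \(S^1\times D^2\). This is done by parallel-translating, or orthogonally projecting, a fixed normal frame of \(\gamma\) to a normal frame of \(\eta\). The construction is possible because \(\eta'\) is \(C^0\)-close to \(\gamma'\), and the resulting frame is \(C^1\)-close to the original because \(\eta''\) is \(C^0\)-close to \(\gamma''\). In these coordinates \(E_\eta\) is \(C^1\)-close to \(E_\gamma\) on the compact set \(S^1\times D^2_{r}\).

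The key step is the standard fact that, on a compact domain, an embedding whose differential is everywhere injective has a \(C^1\)-neighbourhood consisting of embeddings. Injectivity of the differential is an open condition. Global injectivity follows by the usual argument: if \(x_n\ne y_n\) with \(F_n(x_n)=F_n(y_n)\) and \(F_n\to F\) in \(C^1\), then after passing to a subsequence the two points converge. If they converge to distinct limits, this contradicts injectivity of \(F\). If they converge to the same limit, it contradicts the uniform local injectivity coming from the inverse function theorem.

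Applying this fact gives that \(E_\eta\) is an embedding on the \(r\)-disc bundle, where \(r\) is slightly less than \(2r_0\). Hence \(\Thi(\eta)\ge r_\gamma:=r_0\) for all \(\eta\) in a \(C^2\)-neighbourhood, and therefore in some \(C^\infty\)-neighbourhood \(V_\gamma\). In the Euclidean case one could instead use the three-point formula \eqref{eq:three-point}, giving curvature control from \(C^2\) closeness plus a separate argument for distant pairs, but the tubular argument handles both obstructions at once and works for general \(M\).

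Local boundedness of \(\Rop\) then follows immediately, since \(\Rop(\eta)\le L_\gamma/r_\gamma\) on \(V_\gamma\). The main obstacle I expect is the uniform embedding step. That step requires care that the perturbed normal exponential maps really form a \(C^1\)-small perturbation of a single fixed map on a fixed compact domain, which is exactly where \(C^2\)-closeness of \(\eta\) to \(\gamma\) is needed.
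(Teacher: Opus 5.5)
Your proof is correct, but it takes a different route from the paper's.

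The paper works through the decomposition of \cref{thm:thickness-formula}. \(C^2\)-closeness gives the curvature bound \(\inf\rho_\eta\ge r\) directly. A sequential-compactness argument then excludes short doubly critical chords of \(\eta\): limits \(s\ne t\) would produce a short doubly critical chord of \(\gamma\), and coincident limits \(s=t\) are ruled out by an angle estimate for chords of short, nearly straight arcs.

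You instead pull the normal exponential maps back to the fixed bundle \(S^1\times D^2\) and invoke \(C^1\)-stability of embeddings on a compact domain. The two cases in your injectivity argument (distinct limits versus coincident limits) play the roles of the paper's two cases. Injectivity of the differential absorbs the curvature obstruction, so no separate curvature step is needed.

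Your route has two advantages. First, it proves the lemma for a general Riemannian \(M\) directly from the definition of \(\Thi_g\) as the normal injectivity radius, whereas the paper's proof relies on \cref{thm:thickness-formula}, which is stated for curves in \(\R^3\). Second, it replaces the near-diagonal chord-angle estimate with the inverse function theorem.

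The paper's route has a different advantage: its chord argument needs only \(C^1\) convergence plus a uniform Lipschitz bound on tangents, which is why \cref{prop:c11-local-bound} follows verbatim. Your \(C^1\)-closeness of \(E_\eta\) to \(E_\gamma\) uses \(\eta''\). For \(C^{1,1}\) curves you would therefore have to replace the inverse function step. In \(\R^3\), for instance, you could use the bi-Lipschitz estimate \(|E_\eta(a)-E_\eta(b)|\ge\bigl(c-\operatorname{Lip}(E_\eta-E_\gamma)\bigr)\,d(a,b)\), where \(d\) is a path metric on \(S^1\times D^2\) and \(c\) is a lower bi-Lipschitz constant for \(E_\gamma\).

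One small slip: for \(E_\gamma\) to be a diffeomorphism on the closed \(2r_0\)-disc bundle you need \(2r_0<\Thi(\gamma)\), not merely \(r_0<\Thi(\gamma)\). Choosing \(r_0\) from the tubular neighbourhood theorem, as the paper does, fixes this.
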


\begin{proof}
By the tubular neighbourhood theorem choose \(r>0\) so that the normal
exponential map of \(\gamma\) is an embedding on the closed disc bundle of
radius \(2r\).  Let \(N_{2r}\) denote the resulting embedded tube.

\emph{Step 1: nearby curves are normal graphs.}  There is a \(C^2\)
neighbourhood of \(\gamma\) in which every embedding \(\eta\) has image
inside \(N_{r/2}\) and meets each normal disc of \(N_{2r}\) transversally in
exactly one point, so that \(\eta\) is the image of a normal section
\(u_\eta\) with \(\|u_\eta\|_{C^2}\) as small as we please.  This is the
standard local normal graph description.

\emph{Step 2: curvature.}  Since \(\|u_\eta\|_{C^2}\) is small, the
curvature of \(\eta\) is uniformly close to that of \(\gamma\); shrinking
the neighbourhood we may assume \(\inf\rho_\eta\ge r\).

\emph{Step 3: no distant self-contact.}  Suppose no neighbourhood as claimed
existed.  Then there are \(\eta_n\to\gamma\) in \(C^2\) and parameters
\(s_n\ne t_n\) realising doubly critical chords of \(\eta_n\) of length
\(<2r\) tending to a value \(<2r\).  Passing to a subsequence,
\(s_n\to s\), \(t_n\to t\).  If \(s\ne t\), continuity gives a doubly
critical chord of \(\gamma\) of length \(\le\liminf|\eta_n(s_n)-\eta_n(t_n)|<2r\),
contradicting \(\Thi(\gamma)\ge2r\).  If \(s=t\), then for large \(n\) the
two points lie on a short subarc of \(\eta_n\) which, by Step 1, is a graph
over a short subarc of \(\gamma\) with small \(C^2\) norm; on such an arc
the chord \(\eta_n(s_n)-\eta_n(t_n)\) makes an angle with
\(\eta_n'(s_n)\) tending to \(\pi/2\) only if the arc length between the two
parameters is bounded below, which contradicts \(s_n-t_n\to0\).  Hence
\(\tfrac12\operatorname{dcsd}(\eta)\ge r\) on a possibly smaller
neighbourhood.

By \cref{thm:thickness-formula}, Steps 2 and 3 give \(\Thi(\eta)\ge r\).
Finally length is continuous in the \(C^1\) topology, so it is bounded on a
possibly smaller neighbourhood.  Take \(r_\gamma=r\).
\end{proof}

\begin{proposition}[Positive-reach version]\label{prop:c11-local-bound}
The conclusion of \cref{lem:local-tubular-stability} holds for
\(\wh{\calE}_K(M)\) with the strong \(C^{1,1}\) topology.
\end{proposition}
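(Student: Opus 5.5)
The plan is to replace the smooth argument of \cref{lem:local-tubular-stability} with the reach characterisation of thickness from \cref{def:thickness}, together with the three-point formula \eqref{eq:three-point}, since curvature is only defined almost everywhere for \(C^{1,1}\) curves. Fix \(\gamma\in\wh{\calE}_K(M)\) with \(\Thi(\gamma)=4r>0\). The normal exponential map of \(\gamma\) is then an embedding on the open disc bundle of radius \(4r\), and \(\gamma\) is only \(C^{1,1}\). Because the normal bundle of a \(C^{1,1}\) curve is just Lipschitz, I would first pass to a smooth core. Choose a smooth embedding \(\gamma_0\) that is \(C^1\)-close to \(\gamma\), with curvature bounded by roughly \(1/(4r)\); this can be done by mollifying, since mollification does not increase the Lipschitz constant of the tangent beyond a small factor. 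Then \(\gamma\), and every curve near it, is a normal graph \(u\) over \(\gamma_0\), and this is exactly the chart in which \cref{def:sublevels} measures the \(C^{1,1}\) norm.

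\emph{Step 1: local (curvature) part.} Let \(\eta\) be the graph of \(u_\eta\) with \(\|u_\eta-u_\gamma\|_{C^{1,1}}<\delta\). The Lipschitz constant of the unit tangent of \(\eta\) is then bounded by a continuous function of \(\|u_\eta\|_{C^{1,1}}\) and the geometry of \(\gamma_0\). So \(\operatorname{Lip}(\eta'/|\eta'|)\) with respect to arclength is at most \(1/(4r)+O(\delta)\). This bounds \(R(x,y,z)\) from below by about \(2r\) for triples that lie on an arc of length at most some fixed \(\ell\). The key fact here is that a curve whose tangent is \(\kappa\)-Lipschitz has every three-point circle of radius at least about \(1/\kappa\) on short arcs, which follows from the chord--tangent angle estimate.

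\emph{Step 2: distant part.} For triples not all contained in an arc of length \(\ell\), I would argue by contradiction as in Step 3 of \cref{lem:local-tubular-stability}. Suppose \(\eta_n\to\gamma\) in \(C^{1,1}\) and there are triples with \(R<r\), and pass to convergent subsequences of the parameters. If the limit points are distinct, continuity of \(R\) gives a triple on \(\gamma\) with \(R\le r<4r\), a contradiction. If some points collide, the uniform \(C^1\) convergence replaces \(R\) by the radius of a circle tangent to the curve at the collided point, and this radius is still continuous under \(C^1\) convergence. The remaining degenerate case, where all three points collide, is excluded because the triple spans an arc of length at least \(\ell\). Combining Steps 1 and 2 gives \(\Thi(\eta)\ge r\) on a \(C^{1,1}\)-neighbourhood. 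Length is \(C^1\)-continuous, so it is locally bounded, which completes the proof.

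The main obstacle is Step 2 in the partially collided case: it needs the tangent-circle radius to be a lower semicontinuous extension of \(R\) under mere \(C^1\) convergence. If that turns out to be delicate, I would instead use only the two-point form of reach, \(\Thi=\inf_{x\ne y}\frac{|x-y|^2}{2\,\operatorname{dist}(y-x,T_x\gamma)}\). This form is manifestly continuous in \(C^1\) for pairs that are a bounded distance apart, and Step 1 already handles the nearby pairs.
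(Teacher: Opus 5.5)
Your argument is correct and has the same architecture as the paper's proof. A uniform Lipschitz bound on unit tangents in a \(C^{1,1}\) normal-graph chart handles the local obstruction. A sequential-compactness argument needing only \(C^1\) convergence handles the nonlocal one. Length is \(C^1\)-continuous.

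The difference is which form of thickness you use. The paper uses the split \(\Thi=\min\{\inf\rho,\tfrac12\operatorname{dcsd}\}\) of \cref{thm:thickness-formula}. There the local term is immediate, because \(|\kappa|\) is a.e.\ bounded by the Lipschitz constant of the unit tangent. The nonlocal term is Step~3 of \cref{lem:local-tubular-stability} verbatim, so the paper's proof is two sentences.

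Working with \eqref{eq:three-point} costs you an extra short-arc estimate, your chord--tangent bound \(R\gtrsim1/\kappa\) on arcs of length \(\le\ell\). You need it because the three-point radius mixes curvature and self-contact. In exchange, your nonlocal step rests only on closedness of the three-point condition (the mechanism of \cref{rem:three-point}) and never has to track perpendicular chords.

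The case you flag as the main obstacle is harmless. Suppose two parameters collide at \(p\) and the third tends to \(q\ne p\). Uniform \(C^1\) convergence sends the unit secant of the colliding pair to \(\pm T_\gamma(p)\). Hence \(R\) converges in \((0,\infty]\) to \(|p-q|/(2\sin\angle(T_\gamma(p),q-p))\). That value is a limit of three-point radii of \(\gamma\) itself, so it is at least \(\Thi(\gamma)=4r\). You therefore do not need the two-point fallback (Federer's formula), although it would also work.

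Finally, the smooth core \(\gamma_0\) and the sharp constant \(1/(4r)+O(\delta)\) are more than the statement requires. Any uniform tangent-Lipschitz bound over the neighbourhood yields some \(r_\gamma>0\), which is all the lemma asserts and all the paper uses.
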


\begin{proof}
In a strong \(C^{1,1}\) normal graph chart the speeds are bounded away from
\(0\) and the Lipschitz constants of the unit tangents are uniformly
bounded, so the curvature term of \cref{thm:thickness-formula} is uniformly
positive.  The argument of Step 3 above uses only \(C^1\) convergence
together with a uniform Lipschitz bound on tangents and therefore applies
verbatim.
\end{proof}

\subsection{Compact capture}

\begin{theorem}[Compact capture]\label{thm:compact-family-bound}
Let \(P\) be a compact topological space and \(F:P\to\calE_K(M)\)
continuous.  Then \(\sup_{p\in P}\Rop(F(p))<\infty\); that is,
\(F(P)\subset\calE_{K,\Lambda}(M)\) for some finite \(\Lambda\).  The same
holds in \(\wh{\calE}_K(M)\).
\end{theorem}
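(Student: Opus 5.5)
The plan is a standard compactness argument built on \cref{lem:local-tubular-stability}. For each \(p\in P\), apply the lemma to \(\gamma=F(p)\). This gives an open neighbourhood \(V_{F(p)}\) of \(F(p)\) in \(\Emb^\infty(S^1,M)\) and constants \(r_p=r_{F(p)}>0\) and \(L_p=L_{F(p)}<\infty\). For every \(\eta\in V_{F(p)}\) we then have \(\Thi(\eta)\ge r_p\) and \(\Len(\eta)\le L_p\), hence \(\Rop(\eta)\le L_p/r_p\). By continuity of \(F\), the set \(U_p=F^{-1}(V_{F(p)})\) is an open neighbourhood of \(p\) in \(P\), and \(\Rop\circ F\le L_p/r_p\) on \(U_p\).

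The sets \(U_p\) cover \(P\). By compactness finitely many of them, say \(U_{p_1},\dots,U_{p_k}\), already cover \(P\). Put
\[
  \Lambda=\max_{1\le i\le k} L_{p_i}/r_{p_i}<\infty .
\]
Then \(F(P)\subset\calE_{K,\Lambda}(M)\). No Hausdorff or metrisability assumption on \(P\) is needed, since only the open-cover definition of compactness is used.

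For the \(C^{1,1}\) version the argument is identical. We replace \cref{lem:local-tubular-stability} by \cref{prop:c11-local-bound}, which supplies the same local bounds on \(\Thi\) from below and \(\Len\) from above on neighbourhoods in \(\wh\calE_K(M)\) with the strong \(C^{1,1}\) topology. The covering argument then goes through verbatim.

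There is essentially no obstacle once local boundedness is available. All the geometric content lies in the local lemma, in particular its Step 3 excluding short doubly critical chords on nearby curves. The one point to check is compatibility of topologies: \(F\) must be continuous into the topology in which \(V_\gamma\) is open. The \(C^\infty\) topology is finer than the \(C^2\) topology used in the lemma, and the strong \(C^{1,1}\) topology is the one used in \cref{prop:c11-local-bound}, so the preimages \(U_p\) are indeed open.
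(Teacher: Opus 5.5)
Your proposal is correct and is essentially the paper's argument: the paper covers the compact image \(F(P)\) directly by the neighbourhoods \(V_{\gamma}\) of \cref{lem:local-tubular-stability} and bounds ropelength by \(\max_j L_{\gamma_j}/\min_j r_{\gamma_j}\), whereas you pull the cover back to \(P\) and take \(\max_i L_{p_i}/r_{p_i}\), which is the same idea (and marginally sharper). The \(C^{1,1}\) case is handled identically in both, via \cref{prop:c11-local-bound}.
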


\begin{proof}
\(F(P)\) is compact.  Cover it by the neighbourhoods \(V_\gamma\) of
\cref{lem:local-tubular-stability} and extract a finite subcover
\(V_{\gamma_1},\dots,V_{\gamma_N}\).  With
\(r=\min_j r_{\gamma_j}>0\) and \(L=\max_j L_{\gamma_j}<\infty\) every
member of the family has thickness \(\ge r\) and length \(\le L\), hence
ropelength \(\le L/r\).  For the \(C^{1,1}\) case use
\cref{prop:c11-local-bound}.
\end{proof}

\begin{figure}[t]
\centering
\begin{tikzpicture}[scale=1.0]
% parameter space
\draw[line width=0.9pt,ropeblue] (-0.1,0.55) circle (0.72);
\node[font=\scriptsize,below] at (-0.1,-0.35) {$P$ compact};
\draw[->,>=Stealth,line width=0.9pt] (0.85,0.55) -- (1.95,0.55);
\node[font=\scriptsize,above] at (1.4,0.6) {$F$};
% knot space with cover
\begin{scope}[shift={(2.3,0)}]
  \draw[rounded corners=20pt,fill=ropesand,draw=ropegrey,line width=0.7pt]
    (0,-0.7) rectangle (6.6,1.85);
  \node[font=\scriptsize,below right] at (0.1,-0.75) {$\calE_K(M)$};
  % image
  \draw[ropeblue,line width=1.1pt,fill=ropelight,fill opacity=0.6]
    (1.5,0.6) .. controls (2.1,1.45) and (3.6,1.5) .. (4.3,0.85)
              .. controls (4.9,0.3) and (3.2,-0.25) .. (2.2,0.1)
              .. controls (1.6,0.3) and (1.3,0.35) .. (1.5,0.6);
  \node[ropeblue,font=\scriptsize,below] at (2.95,-0.35) {$F(P)$};
  % cover discs
  \foreach \x/\y in {1.75/0.5,2.65/0.95,3.6/0.95,4.25/0.55,3.05/0.3}{
    \draw[ropered,line width=0.55pt,dashed] (\x,\y) circle (0.6);}
  \node[ropered,font=\scriptsize,right] at (5.0,1.5) {$V_{\gamma_j}$};
\end{scope}
\end{tikzpicture}
\caption{Compact capture (\cref{thm:compact-family-bound}).  On each
\(V_{\gamma_j}\) thickness is bounded below and length above; finitely many
suffice to cover the compact image, so the whole family lives under one
finite budget.  Every exhaustion statement in this paper is this picture.}
\label{fig:compact-capture}
\end{figure}

\begin{lemma}[Descent to open quotients]\label{lem:quotient-descent}
Let a topological group \(G\) act on \(X\) with open orbit map \(q:X\to
X/G\), and let \(h:X\to[0,\infty)\) be \(G\)-invariant and locally bounded.
Then the induced \(\bar h\) on \(X/G\) is locally bounded, so every compact
subset of \(X/G\) lies in one sublevel of \(\bar h\).
\end{lemma}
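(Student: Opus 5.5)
The plan is to transfer local boundedness through the orbit map using openness, and then apply the usual finite-cover argument from the proof of \cref{thm:compact-family-bound}.

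\emph{Step 1: \(\bar h\) is well defined.}  Since \(h\) is \(G\)-invariant, it is constant on orbits. Hence \(\bar h([x])=h(x)\) does not depend on the representative chosen.

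\emph{Step 2: local boundedness of \(\bar h\).}  Fix a point \(\xi\in X/G\) and choose \(x\in X\) with \(q(x)=\xi\). Local boundedness of \(h\) gives an open neighbourhood \(V\ni x\) and a constant \(C<\infty\) with \(h\le C\) on \(V\). Because \(q\) is open, \(q(V)\) is an open neighbourhood of \(\xi\). Every point of \(q(V)\) has the form \(q(y)\) with \(y\in V\), so \(\bar h(q(y))=h(y)\le C\). Thus \(\bar h\) is bounded on the open set \(q(V)\), which is exactly local boundedness at \(\xi\).

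\emph{Step 3: compact sets lie in one sublevel.}  Let \(A\subset X/G\) be compact. Step 2 gives, for each \(\xi\in A\), an open set \(W_\xi\ni\xi\) and a bound \(C_\xi\) with \(\bar h\le C_\xi\) on \(W_\xi\). Finitely many \(W_{\xi_1},\dots,W_{\xi_N}\) cover \(A\). Then \(\bar h\le\max_j C_{\xi_j}\) on \(A\), so \(A\) lies in the sublevel \(\{\bar h\le\Lambda\}\) with \(\Lambda=\max_j C_{\xi_j}\).

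No step is a real obstacle. The only point needing care is that openness of \(q\) is used in Step 2, where a neighbourhood upstairs is pushed down to a neighbourhood downstairs. No continuity of \(\bar h\) is required.

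This hypothesis always holds for group actions: for open \(V\), the saturation \(q^{-1}(q(V))=\bigcup_{g\in G}gV\) is open, since each \(g\) acts by a homeomorphism. So \(q\) is automatically open, and the hypothesis simply records this fact.
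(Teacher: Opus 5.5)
Your proof is correct and matches the paper's argument. The paper notes that the bound on \(V\) extends to the saturation \(GV\) with \(q(V)=q(GV)\) open, which is your observation \(\bar h(q(y))=h(y)\le C\) in other words. It then finishes with the same finite-subcover step, and your closing remark that \(q\) is automatically open for a continuous action is also right.
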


\begin{proof}
If \(h\le C\) on a neighbourhood \(V\) of \(x\), the saturation \(GV\)
satisfies the same bound and \(q(V)=q(GV)\) is a neighbourhood of \(q(x)\).
Apply a finite subcover.
\end{proof}

\subsection{Exhaustion}

Fix \(\gamma_*\in\calE_K(M)\) and let \(\Lambda_*=\Rop(\gamma_*)\).

\begin{definition}[Persistent homotopy]\label{def:persistent-homotopy}
For \(\Lambda\ge\Lambda_*\) put
\[
  \Pi^K_i(\Lambda)=\pi_i\bigl(\calE_{K,\Lambda}(M),\gamma_*\bigr),
\]
with structural maps induced by the inclusions
\(\calE_{K,\Lambda}\hookrightarrow\calE_{K,\Lambda'}\) for
\(\Lambda\le\Lambda'\).
\end{definition}

No tameness or finiteness is assumed; ``persistent homotopy'' here names
this filtered functor and nothing more.  See
\cref{rem:no-barcodes}.

\begin{theorem}[Homotopical exhaustion]\label{thm:homotopical-exhaustion}
For every \(i\ge0\) the inclusions induce an isomorphism
\[
  \colim_{\Lambda\to\infty}\Pi^K_i(\Lambda)
  \ \xrightarrow{\ \cong\ }\
  \pi_i(\calE_K(M),\gamma_*).
\]
The statement holds for closed and for strict sublevels, and in the
\(C^{1,1}\) model.
\end{theorem}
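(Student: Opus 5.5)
The plan is to deduce the isomorphism from compact capture (\cref{thm:compact-family-bound}) applied to spheres and to homotopies between spheres. The colimit of the directed system \(\Lambda\mapsto\Pi^K_i(\Lambda)\) over the directed set \([\Lambda_*,\infty)\) is computed as a union of images in the terminal group, so the statement splits into two claims:
\begin{itemize}
\item \emph{surjectivity}: every class in \(\pi_i(\calE_K(M),\gamma_*)\) comes from some finite level;
\item \emph{injectivity}: a class at level \(\Lambda\) that dies in \(\calE_K(M)\) already dies at some finite level \(\Lambda'\ge\Lambda\).
\end{itemize}
For \(i=0\) we read both as statements about pointed sets; for \(i\ge1\) the colimit of groups is the colimit of underlying sets, because the index set is directed.

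For surjectivity, take a pointed map \(f:(S^i,\ast)\to(\calE_K(M),\gamma_*)\). The sphere \(S^i\) is compact, so \cref{thm:compact-family-bound} gives a finite \(\Lambda\) with \(f(S^i)\subset\calE_{K,\Lambda}(M)\). Enlarging \(\Lambda\) if necessary we may take \(\Lambda\ge\Lambda_*\), which is harmless because the basepoint has \(\Rop(\gamma_*)=\Lambda_*\). Since the sublevel carries the subspace topology, \(f\) is a continuous pointed map into \(\calE_{K,\Lambda}(M)\), and its class in \(\Pi^K_i(\Lambda)\) maps to \([f]\).

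For injectivity, suppose \(f:S^i\to\calE_{K,\Lambda}(M)\) is null in \(\calE_K(M)\). Choose a pointed null-homotopy \(H:S^i\times[0,1]\to\calE_K(M)\); for \(i=0\) this is instead a path joining the two relevant points. The domain \(S^i\times[0,1]\) is compact, so \cref{thm:compact-family-bound} places its image in some \(\calE_{K,\Lambda'}(M)\) with \(\Lambda'\ge\Lambda\). Hence \(f\) is already null in \(\Pi^K_i(\Lambda')\). The argument uses nothing beyond compactness of the domain of a representative and of a homotopy, so no tameness of the filtration is needed.

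For strict sublevels, \(\calE_{K,\Lambda}\subset\calE_{K,<\Lambda+1}\subset\calE_{K,\Lambda+1}\). The two directed systems are therefore mutually cofinal, and they have the same colimit. For the \(C^{1,1}\) model, apply the \(\wh\calE\) clause of \cref{thm:compact-family-bound}, which rests on \cref{prop:c11-local-bound}, in exactly the same way.

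The one point needing care is that the colimit is taken over a continuum-indexed system rather than a sequence. This causes no difficulty: the directed set is totally ordered and the integers are cofinal in it, so one may pass to \(\Lambda\in\mathbb{N}\) throughout. I do not expect a genuine obstacle. All the analytic content is in \cref{lem:local-tubular-stability}, which is already established; the only step where I would pause is checking that compact capture really does give a uniform bound on the whole compact image, which is exactly what the finite-subcover argument of \cref{thm:compact-family-bound} provides.
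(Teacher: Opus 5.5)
Your proposal is correct and follows essentially the same route as the paper. Surjectivity and injectivity both come from applying compact capture (\cref{thm:compact-family-bound}) to the compact domains \(S^i\) and \(S^i\times[0,1]\), or to points and paths when \(i=0\); strict sublevels are handled by enlarging the level, and the \(C^{1,1}\) case by \cref{prop:c11-local-bound}. Your null-homotopy form of injectivity is equivalent to the paper's version, which uses a homotopy between two representatives, because for \(i\ge1\) the colimit map is a homomorphism and for \(i=0\) you correctly use a path between two points.
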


\begin{proof}
Surjectivity: a class is represented by \(F:(S^i,*)\to(\calE_K,\gamma_*)\),
whose image is compact, hence contained in some \(\calE_{K,\Lambda}\) by
\cref{thm:compact-family-bound}.

Injectivity: if two finite-level classes agree in \(\pi_i(\calE_K)\), a
based homotopy \(H:S^i\times[0,1]\to\calE_K\) between representatives has
compact image, hence lies in a single finite sublevel, so the classes agree
there.  For \(i=0\) replace spheres and homotopies by points and paths.  For
strict sublevels enlarge \(\Lambda\) slightly; for the \(C^{1,1}\) model use
\cref{prop:c11-local-bound}.
\end{proof}

\begin{corollary}[Homological exhaustion]\label{cor:homology-exhaustion}
For every ring \(R\) and \(j\ge0\),
\(\colim_\Lambda H_j(\calE_{K,\Lambda};R)\cong H_j(\calE_K;R)\).
\end{corollary}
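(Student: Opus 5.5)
The plan is to deduce \cref{cor:homology-exhaustion} from the same compact-support argument that proved \cref{thm:homotopical-exhaustion}, now applied to singular chains, rather than by applying Hurewicz to the homotopy statement. The key input is that singular homology commutes with directed unions of subspaces along which every compact set is captured. Write \(\calE_\Lambda=\calE_{K,\Lambda}(M)\). The family \(\{\calE_\Lambda\}_{\Lambda}\) is directed by inclusion and has union \(\calE_K\). By \cref{thm:compact-family-bound}, every compact subset of \(\calE_K\) lies in some single \(\calE_\Lambda\).

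\textbf{Chain level.} Every singular simplex \(\sigma:\Delta^j\to\calE_K\) has compact image, so it factors through some \(\calE_\Lambda\). A finite chain involves finitely many simplices, so it lies in \(C_*(\calE_\Lambda;R)\) once \(\Lambda\) is the maximum of the finitely many levels needed. Hence the natural map
\[
\colim_\Lambda C_*(\calE_\Lambda;R)\to C_*(\calE_K;R)
\]
is surjective. It is also injective, because each \(C_*(\calE_\Lambda)\to C_*(\calE_K)\) is an inclusion of free modules on subsets of simplices. The map is therefore an isomorphism of chain complexes.

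\textbf{Homology.} Filtered (directed) colimits of \(R\)-modules are exact, so they commute with taking homology. This gives \(\colim_\Lambda H_j(\calE_\Lambda;R)\cong H_j(\colim_\Lambda C_*(\calE_\Lambda;R))\cong H_j(\calE_K;R)\), and the composite is induced by the inclusions.

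For a direct argument instead, I would check two things. Surjectivity: a cycle \(z\) in \(\calE_K\) is supported in some \(\calE_\Lambda\), and it is still a cycle there. Injectivity: if \(z=\partial c\) in \(\calE_K\), then \(c\) is supported in some \(\calE_{\Lambda'}\) with \(\Lambda'\ge\Lambda\), so \([z]\) already dies at level \(\Lambda'\).

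\textbf{Variants and obstacle.} The same proof covers strict sublevels and the \(C^{1,1}\) model, using \cref{prop:c11-local-bound}, and the quotients \(\calU_K\) and \(\mathscr K_K\), using \cref{lem:quotient-descent}. The only real content is compact capture for images of simplices, which is just \cref{thm:compact-family-bound} with \(P=\Delta^j\). So I expect no genuine obstacle. The one point needing care is to make sure the colimit is taken over a directed system and that exactness of directed colimits is invoked correctly.
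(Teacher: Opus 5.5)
Your proposal is correct and follows essentially the same route as the paper. The paper's proof also uses compact capture for each singular simplex to identify the singular chain complex of \(\calE_K\) with the filtered colimit of the sublevel chain complexes, and then uses exactness of filtered colimits of modules. You spell out the chain-level injectivity and surjectivity, which the paper leaves implicit.
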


\begin{proof}
Each singular simplex has compact image, and a chain is a finite sum of
simplices, so the singular chain complex of \(\calE_K\) is the filtered
colimit of those of the sublevels.  Filtered colimits of modules are exact.
\end{proof}

\begin{corollary}[Telescope]\label{cor:telescope}
For any cofinal \(\Lambda_*\le\Lambda_1<\Lambda_2<\cdots\to\infty\) the
canonical map
\(\operatorname{Tel}(\calE_{K,\Lambda_1}\to\calE_{K,\Lambda_2}\to\cdots)
\to\calE_K(M)\)
is a weak homotopy equivalence.
\end{corollary}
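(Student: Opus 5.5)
The plan is to reduce the telescope statement to \cref{thm:homotopical-exhaustion} by the standard fact that homotopy groups of a mapping telescope are the colimit of the homotopy groups of its stages. Write \(T=\operatorname{Tel}(\calE_{K,\Lambda_1}\to\calE_{K,\Lambda_2}\to\cdots)\), built as the quotient of \(\bigsqcup_n \calE_{K,\Lambda_n}\times[n,n+1]\) with \(\calE_{K,\Lambda_n}\times\{n+1\}\) glued to \(\calE_{K,\Lambda_{n+1}}\times\{n+1\}\) via the inclusion. The canonical map \(c:T\to\calE_K(M)\) is the projection that forgets the telescope coordinate. Since the structure maps are inclusions, \(c\) is continuous on each piece and hence on \(T\). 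We take \(\gamma_*\times\{1\}\) as basepoint of \(T\); because \(\gamma_*\in\calE_{K,\Lambda_1}\), the ray \(\gamma_*\times[1,\infty)\) lies in \(T\), so basepoints in every stage are compatible.

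\emph{Step 1: the finite telescopes.} The finite telescope \(T_n\), built from the first \(n\) stages, deformation retracts onto its last stage \(\calE_{K,\Lambda_n}\times\{n\}\) by pushing the coordinate to the right. Under this retraction the inclusion \(T_n\subset T_{n+1}\) corresponds, up to based homotopy, to the inclusion \(\calE_{K,\Lambda_n}\hookrightarrow\calE_{K,\Lambda_{n+1}}\). In the same way, \(c|_{T_n}\) is homotopic to the inclusion \(\calE_{K,\Lambda_n}\hookrightarrow\calE_K\).

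\emph{Step 2: compact subsets of \(T\).} We need \(\pi_i(T)=\colim_n\pi_i(T_n)\). This follows once every compact subset of \(T\) lies in some \(T_n\). I expect this to be the main obstacle, since the spaces are not CW complexes. The argument: the function \(T\to[1,\infty)\) given by the telescope coordinate is continuous, so it is bounded on a compact set, which therefore lies in some \(T_n\). One must also check that \(T_n\) carries the subspace topology from \(T\). This holds because \(T_n\) is the preimage of \([1,n]\) under the coordinate function and the quotient is formed piecewise along closed pieces. Applying this to spheres and homotopies gives the colimit formula for \(\pi_i\), and for \(\pi_0\) at any basepoint.

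\emph{Step 3: comparison.} By Step 1, \(\colim_n\pi_i(T_n)\cong\colim_n\pi_i(\calE_{K,\Lambda_n},\gamma_*)\). Cofinality of \((\Lambda_n)\) in \([\Lambda_*,\infty)\) identifies this with \(\colim_\Lambda\Pi^K_i(\Lambda)\). \Cref{thm:homotopical-exhaustion} then maps it isomorphically onto \(\pi_i(\calE_K,\gamma_*)\), compatibly with \(c_*\). So \(c\) induces isomorphisms on all \(\pi_i\) at the basepoint \(\gamma_*\).

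\emph{Other basepoints.} Every path component of \(T\) meets some stage, and \(\pi_0\) is a bijection by the case \(i=0\). At a basepoint \(\gamma\in\calE_{K,\Lambda_n}\), the same argument applies with \(\gamma\) in place of \(\gamma_*\), since \cref{thm:homotopical-exhaustion} holds for any basepoint. This shows \(c\) is a weak homotopy equivalence.
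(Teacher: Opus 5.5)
Your argument is correct and is the deduction the paper leaves implicit: it states \cref{cor:telescope} without a separate proof, as a consequence of \cref{thm:homotopical-exhaustion}, via the identification \(\pi_i(\operatorname{Tel})\cong\colim_n\pi_i(\calE_{K,\Lambda_n})\) that you establish. Your Step~2 supplies the point-set input needed because the stages are not CW complexes: the telescope coordinate is continuous, and each finite telescope is the image of a closed saturated piece of the quotient, so compact sets, spheres and homotopies land in some \(T_n\) with its correct topology.
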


\begin{corollary}[Quotient and Borel versions]
\label{cor:quotient-exhaustion}
\Cref{thm:homotopical-exhaustion,cor:homology-exhaustion,cor:telescope} hold
for \(\calU_K(M)\), for \(\mathscr K_K(M)\), and for the Borel constructions
\(EG\times_G\calE_{K,\Lambda}\) associated with a compact Lie group \(G\)
acting on \(M\) by isometries.
\end{corollary}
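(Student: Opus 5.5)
The plan is to reduce each case to the compact-capture mechanism already used in \cref{thm:homotopical-exhaustion,cor:homology-exhaustion,cor:telescope}. The only inputs those proofs use are these: every compact subset of the ambient space lies in one finite sublevel, and sublevels are nested with union the whole space. So for each of the three models it suffices to prove compact capture there and then rerun the same proofs verbatim. Surjectivity comes from the compact image of a sphere, and injectivity from the compact image of a homotopy. Homology follows because singular chains are a filtered colimit, and the telescope statement follows from the homotopy and homology colimit statements by the standard argument that a map from a telescope of a nested sequence which is a \(\pi_*\)-isomorphism on colimits is a weak equivalence.

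\emph{Quotients.} For \(\calU_K(M)=\calE_K(M)/\Diff^+(S^1)\) and \(\mathscr K_K(M)\), the steps are as follows.
\begin{enumerate}[label=\textup{(\arabic*)},leftmargin=2.4em]
\item Observe that \(\Rop\) is invariant under reparametrisation.
\item Note that the orbit map of a group action is always open, since \(q^{-1}(q(V))=\bigcup_g gV\).
\item Deduce compact capture from \cref{lem:quotient-descent}, with \(h=\Rop\) locally bounded by \cref{lem:local-tubular-stability}.
\end{enumerate}
One subtlety is that homotopies in the quotient must have compact image, which is automatic since they are continuous maps from compact spaces. We do not need to lift anything to \(\calE_K\).

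\emph{Borel constructions.} For \(EG\times_G\calE_{K,\Lambda}\) with \(G\) a compact Lie group acting by isometries, the steps are as follows.
\begin{enumerate}[label=\textup{(\arabic*)},leftmargin=2.4em]
\item Check that isometries preserve \(\Len\) and \(\Thi\), so each \(\calE_{K,\Lambda}\) is \(G\)-invariant and the Borel constructions form a nested filtration of \(EG\times_G\calE_K\).
\item Define \(h([e,\gamma])=\Rop(\gamma)\). It is well defined and locally bounded on \(EG\times_G\calE_K\), because it is pulled back from the locally bounded \(\Rop\) on \(EG\times\calE_K\) via the open quotient map, again by \cref{lem:quotient-descent} applied to the diagonal action.
\item Conclude that any compact family lands in some \(EG\times_G\calE_{K,\Lambda}\), and rerun the proofs.
\end{enumerate}
For an alternative, more structural check of step (3), one can use the fibration \(\calE_K\to EG\times_G\calE_K\to BG\) together with its filtered versions, and apply the five lemma to the colimit of long exact sequences. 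Filtered colimits are exact, and the base \(BG\) is constant in \(\Lambda\).

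The main obstacle I expect is topological bookkeeping in the Borel case. One must choose a model of \(EG\) for which \(EG\times\calE_K\to EG\times_G\calE_K\) has the open-quotient property and the product topology behaves well. The fibration and five-lemma route sidesteps this, so I would try the direct descent argument first and fall back on the fibration comparison if needed. I would also remark that the \(C^{1,1}\) versions follow identically using \cref{prop:c11-local-bound}.
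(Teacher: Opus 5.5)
Your proposal is correct and follows the paper's argument. Invariance of \(\Rop\) under reparametrisation and isometry, openness of orbit maps, and \cref{lem:quotient-descent} give compact capture in each model, after which the proofs of \cref{thm:homotopical-exhaustion,cor:homology-exhaustion,cor:telescope} rerun verbatim. The only variation is in the Borel case: the paper gets local boundedness from a local trivialisation of \(EG\to BG\), whereas you apply \cref{lem:quotient-descent} to the diagonal action on \(EG\times\calE_K\), which is equally valid and does not even need local triviality; one small correction is that \(G\)-invariance of \(\calE_{K,\Lambda}\) requires \(G\) to preserve the knot type, not merely \(\Len\) and \(\Thi\), and the statement tacitly assumes this.
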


\begin{proof}
Ropelength is invariant under reparametrisation and ambient isometry, and
the orbit maps are open, so \cref{lem:quotient-descent} applies.  Over a
local trivialisation of \(EG\to BG\) the induced function on the Borel
construction is pulled back from the knot-space factor, hence locally
bounded there too.
\end{proof}

\begin{figure}[t]
\centering
\begin{tikzpicture}[scale=1.0]
\foreach \i/\cx/\w/\h in {1/0/0.62/0.42, 2/2.25/0.88/0.60, 3/4.9/1.14/0.78,
                          4/7.95/1.40/0.96}{
  \draw[rounded corners=7pt,fill=ropesand,draw=ropegrey,line width=0.7pt]
    ({\cx-\w},{-\h}) rectangle ({\cx+\w},{\h});
  \node[font=\scriptsize,below] at (\cx,{-\h-0.1}) {$\calE_{K,\Lambda_\i}$};
}
\foreach \a/\b in {0.68/1.31, 3.19/3.70, 6.10/6.49}{
  \draw[->,>=Stealth,line width=0.8pt] (\a,0) -- (\b,0);}
\draw[->,>=Stealth,line width=0.8pt] (9.42,0) -- (10.25,0);
\node[font=\scriptsize,above] at (9.83,0.08) {$\simeq_w$};
\node[font=\scriptsize,right] at (10.32,0) {$\calE_K$};
\end{tikzpicture}
\caption{\Cref{cor:telescope}: the mapping telescope of any cofinal sequence
of sublevels is weakly equivalent to the whole component.  Filtering by
ropelength therefore refines the classical theory without losing any of it.}
\label{fig:telescope}
\end{figure}

\subsection{What compact capture does and does not give}

\begin{remark}[Honest reading of \cref{thm:homotopical-exhaustion}]
\label{rem:honest-exhaustion}
The exhaustion theorem is soft.  It follows from compactness of spheres and
of \([0,1]\), and it would hold for any function on the knot space that is
locally bounded.  What it does is to license the whole enterprise: it says
that a class of the knot space is \emph{always} visible at some finite
budget, and that two classes visible at finite budget are identified in the
limit only if they are already identified at some finite budget.  Without
it, ``persistent homotopy of a knot space'' would not be known to converge
to anything.  With it, the only remaining questions are quantitative: at
which level, and with how much excess.
\end{remark}

\begin{remark}[Smoothing at a critical level]\label{rem:smoothing}
There is one technical point that the reader should be aware of and that is
not resolved here.  The inclusion
\(\calE_{K,\Lambda}\hookrightarrow\wh{\calE}_{K,\Lambda}\) of the smooth into
the \(C^{1,1}\) sublevel is not known to be a weak equivalence at a fixed
closed level, because an arbitrarily small smoothing of a \(C^{1,1}\) family
may push ropelength across the value \(\Lambda\).  All statements in this
paper are therefore made either for a fixed model, or after
\emph{right-relaxation}, that is after passing to the direct limit over
levels \(\Lambda+\eps\), \(\eps>0\).  A parametric smoothing statement
strong enough to remove this caveat is \cref{conj:right-relaxed-smoothing}.
\end{remark}

\section{Widths, death levels, and the persistence window}
\label{sec:widths}

Exhaustion turns the filtration into numerical data attached to homotopy
classes.  This section defines that data, proves its formal properties, and
then uses symmetry to show that a large part of the \emph{stable} data is
geometrically inexpensive in the round-sphere model.  This is a useful
restriction on where genuinely filtered phenomena can occur, but it does not
eliminate transient finite-level classes.

\subsection{Width}

\begin{definition}[Ropelength width]\label{def:width}
Let \(i\ge1\) and \(\alpha\in\pi_i(\calE_K(M),\gamma_*)\).  Its
\emph{ropelength width} is
\[
  \wid_i^K(\alpha;\gamma_*)
  =\inf\Bigl\{\ \sup_{z\in S^i}\Rop(F(z))\ :\
    F:(S^i,*)\to(\calE_K,\gamma_*),\ [F]=\alpha\Bigr\}.
\]
Equivalently,
\[
  \wid_i^K(\alpha;\gamma_*)
  =\inf\bigl\{\Lambda\ge\Lambda_*:\
    \alpha\in\im[\pi_i(\calE_{K,\Lambda},\gamma_*)\to
                 \pi_i(\calE_K,\gamma_*)]\bigr\}.
\]
Its \emph{excess width} is
\(\exc_i^K(\alpha;\gamma_*)=\wid_i^K(\alpha;\gamma_*)-\Lambda_*\ge0\).
\end{definition}

The two formulations agree because the peak of \(\Rop\) over a representative
family is precisely the smallest level containing it.  No claim is made that
the infimum is attained.

\begin{figure}[t]
\centering
\begin{tikzpicture}[scale=0.88]
% landscape
\draw[fill=ropesand,draw=none]
  (0,0.1) .. controls (1.1,0.1) and (1.0,1.75) .. (2.1,1.75)
        .. controls (3.2,1.75) and (3.0,0.35) .. (4.1,0.35)
        .. controls (5.2,0.35) and (5.1,1.35) .. (6.2,1.35)
        -- (6.2,-0.2) -- (0,-0.2) -- cycle;
\draw[line width=0.9pt,ropegrey]
  (0,0.1) .. controls (1.1,0.1) and (1.0,1.75) .. (2.1,1.75)
        .. controls (3.2,1.75) and (3.0,0.35) .. (4.1,0.35)
        .. controls (5.2,0.35) and (5.1,1.35) .. (6.2,1.35);
\draw[ropeblue,dashed,line width=0.8pt] (-0.3,1.75) -- (6.5,1.75);
\node[ropeblue,font=\scriptsize,left] at (-0.3,1.75) {peak};
\draw[ropered,dashed,line width=0.8pt] (-0.3,0.1) -- (6.5,0.1);
\node[ropered,font=\scriptsize,left] at (-0.3,0.1) {$\Lambda_*$};
\filldraw[ropered] (0,0.1) circle (1.6pt);
\node[ropered,font=\scriptsize,above right] at (0.05,0.14) {$\gamma_*$};
\node[font=\scriptsize] at (3.1,-0.55) {a representing family $F$};
% right: min over families
\begin{scope}[shift={(8.4,0)}]
\draw[->,>=Stealth,line width=0.9pt] (0,-0.2) -- (0,2.1);
\node[rotate=90,font=\scriptsize] at (-0.62,1.0) {peak $\Rop$};
\foreach \x/\y in {0.5/1.75,1.0/1.45,1.5/1.25,2.0/1.12,2.5/1.06}{
  \filldraw[ropeblue] (\x,\y) circle (1.5pt);}
\draw[ropered,dashed,line width=0.8pt] (0,1.0) -- (3.0,1.0);
\node[ropered,font=\scriptsize,right] at (3.02,1.0) {$\wid_i(\alpha)$};
\node[font=\scriptsize,below] at (1.5,-0.3) {families representing $\alpha$};
\end{scope}
\end{tikzpicture}
\caption{Width is a min--max quantity: each family representing a class has
a peak ropelength, and the width is the infimum of the peaks.  The excess
width \(\exc_i=\wid_i-\Lambda_*\) measures how much rope the class costs
\emph{beyond} the basepoint configuration.}
\label{fig:width}
\end{figure}

\begin{proposition}[Width is a non-Archimedean seminorm]
\label{prop:width-properties}
For every \(i\ge1\):
\begin{enumerate}[label=\textup{(\roman*)},leftmargin=2.6em]
\item \(\Lambda_*\le\wid_i^K(\alpha;\gamma_*)<\infty\);
\item \(\wid_i^K(1;\gamma_*)=\Lambda_*\);
\item \(\wid_i^K(\alpha^{-1};\gamma_*)=\wid_i^K(\alpha;\gamma_*)\);
\item \(\wid_i^K(\alpha\beta;\gamma_*)\le
  \max\{\wid_i^K(\alpha;\gamma_*),\wid_i^K(\beta;\gamma_*)\}\).
\end{enumerate}
Hence \(\exc_i^K(-;\gamma_*)\) is a non-Archimedean group seminorm.  For
\(i\ge2\) read the group additively.
\end{proposition}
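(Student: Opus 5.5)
The plan is to work directly with the first (min--max) formulation of \cref{def:width}, manipulating representing families and tracking their peak ropelength. All four items then reduce to elementary operations on based maps \(F:(S^i,*)\to(\calE_K,\gamma_*)\): constant maps, precomposition with a reflection, and pinch-map concatenation. None of these operations creates a configuration that was not already in the image of some input family.

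For (i), every representative passes through the basepoint, so \(\sup_{z}\Rop(F(z))\ge\Rop(\gamma_*)=\Lambda_*\); taking the infimum gives the lower bound. Finiteness follows because any representative \(F\) has compact domain \(S^i\), so \cref{thm:compact-family-bound} gives \(\sup\Rop(F(S^i))<\infty\). (Equivalently, surjectivity in \cref{thm:homotopical-exhaustion} places \(\alpha\) in the image at some finite level.) For (ii), the constant map at \(\gamma_*\) represents the identity class and has peak exactly \(\Lambda_*\); combined with (i) this gives equality.

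For (iii), I would take a representative \(F\) of \(\alpha\) and precompose with a based reflection \(\sigma\) of \(S^i\) (a coordinate reflection fixing \(*\)). Then \(F\circ\sigma\) represents \(\alpha^{-1}\) and has the same image, hence the same peak. This gives \(\wid(\alpha^{-1})\le\wid(\alpha)\), and applying the same argument to \(\alpha^{-1}\) gives the reverse inequality. For (iv), given representatives \(F,G\) of \(\alpha,\beta\) with peaks within \(\eps\) of the respective widths, the product \(\alpha\beta\) is represented by \((F\vee G)\circ c\), where \(c:S^i\to S^i\vee S^i\) is the pinch map. Its image is \(F(S^i)\cup G(S^i)\), so its peak is \(\max\) of the two peaks. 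Letting \(\eps\to0\) gives the ultrametric inequality.

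Finally, subtracting \(\Lambda_*\) turns (i)--(iv) into the axioms of a non-Archimedean group seminorm: \(\exc\ge0\), \(\exc(1)=0\), symmetry under inversion, and \(\exc(\alpha\beta)\le\max\{\exc(\alpha),\exc(\beta)\}\). For \(i\ge2\) the same argument applies verbatim in additive notation. The main obstacle is essentially bookkeeping in (i): the finiteness claim relies on compact capture in the chosen model. I would point out that in the \(C^{1,1}\) model one should cite the \(\wh\calE\) clause of \cref{thm:compact-family-bound}. I would also note that, because the infimum need not be attained, every item is proved with \(\eps\)-near-optimal representatives rather than minimisers.
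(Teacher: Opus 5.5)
Your proposal is correct and follows the paper's proof essentially step by step. The steps match: the basepoint gives the lower bound, the constant map gives (ii), precomposition with an orientation-reversing based self-map of \(S^i\) gives (iii), and the pinch map (which is concatenation when \(i=1\)) applied to \(\eps\)-near-optimal representatives gives (iv). The only cosmetic difference is that you obtain finiteness directly from compact capture applied to a single representative, whereas the paper cites homotopical exhaustion, which is itself proved from compact capture.
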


\begin{proof}
(i) The basepoint lies in the image of every based representative, whence
the lower bound; finiteness is \cref{thm:homotopical-exhaustion}.  (ii) Use
the constant map.  (iii) Precompose with an orientation-reversing self-map
of \(S^i\); the image, hence the peak, is unchanged.  (iv) Given
representatives with peaks within \(\eps\) of the two widths, concatenate
(for \(i=1\)) or use the pinch map \(S^i\to S^i\vee S^i\) (for \(i\ge2\));
the image of the resulting representative is the union of the two images, so
its peak is at most the larger of the two peaks.  Let \(\eps\to0\).
\end{proof}

\begin{definition}[Stable filtration and entry level]
Put
\[
  F_\Lambda\pi_i(\calE_K,\gamma_*)
  =\im\bigl[\pi_i(\calE_{K,\Lambda},\gamma_*)\to
    \pi_i(\calE_K,\gamma_*)\bigr].
\]
These subgroups increase with \(\Lambda\) and exhaust
\(\pi_i(\calE_K,\gamma_*)\); the width of \(\alpha\) is its entry level into
this filtration.  Put
\[
  \sigma_i(K;\gamma_*)=\sup_{\alpha\in\pi_i(\calE_K,\gamma_*)}
    \wid_i^K(\alpha;\gamma_*)\in[\Lambda_*,\infty],
\]
the \emph{stabilisation level} in degree \(i\): above it, every stable class
is already realised.
\end{definition}

\subsection{Symmetry is free}

The following observation is elementary but it decides the shape of the
theory, so we state it separately.

\begin{theorem}[Isometry orbits have zero excess width]
\label{thm:orbit-width}
Let \(G\) act on \(M\) by isometries preserving \(\calE_K(M)\), let
\(\gamma_*\in\calE_K(M)\) and let
\[
  o:G\longrightarrow\calE_K(M),\qquad g\longmapsto g\circ\gamma_* ,
\]
be the orbit map, with image \(O=G\cdot\gamma_*\).  Then
\(\Rop\equiv\Lambda_*\) on \(O\), and consequently:
\begin{enumerate}[label=\textup{(\roman*)},leftmargin=2.6em]
\item every class in \(\im[o_*:\pi_i(G,e)\to\pi_i(\calE_K,\gamma_*)]\) has
excess width \(0\);
\item more generally, if the inclusion \(O\hookrightarrow\calE_K(M)\) is
surjective on \(\pi_i\) for some \(i\ge1\), then
\(\wid_i^K(\alpha;\gamma_*)=\Lambda_*\) for \emph{every}
\(\alpha\in\pi_i(\calE_K,\gamma_*)\), that is \(\exc_i\equiv0\) and
\(\sigma_i(K;\gamma_*)=\Lambda_*\).
\end{enumerate}
\end{theorem}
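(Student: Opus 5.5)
The plan is to prove first that \(\Rop\) is constant on the orbit, and then to read off (i) and (ii) from the min--max description of width in \cref{def:width}.

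\emph{Invariance on the orbit.} Fix \(g\in G\). Since \(g\) is an isometry of \(M\), the curve \(g\circ\gamma_*\) has the same length as \(\gamma_*\). The normal exponential map of \(g\circ\gamma_*\) is \(g\) composed with that of \(\gamma_*\), so the two maps are embeddings on exactly the same radii. Equivalently, \(g\) carries normal discs and metric tubes to normal discs and metric tubes. Hence \(\Thi_g(g\circ\gamma_*)=\Thi_g(\gamma_*)\). In \(\R^3\) one can instead note that the three-point formula \eqref{eq:three-point} is invariant under rigid motions. Either way \(\Rop(g\circ\gamma_*)=\Lambda_*\) for every \(g\), so \(O\subset\calE_{K,\Lambda_*}(M)\).

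\emph{Part (i).} Take \(\alpha=o_*[\phi]\) with \(\phi:(S^i,*)\to(G,e)\). The composite \(F=o\circ\phi\) is a based representative of \(\alpha\). It is continuous, since the action map \(G\times\calE_K\to\calE_K\) is continuous; I will state this as the standing meaning of ``\(G\) acts preserving \(\calE_K(M)\)''. Its image lies in \(O\), so \(\sup_z\Rop(F(z))=\Lambda_*\). Therefore \(\wid_i^K(\alpha;\gamma_*)\le\Lambda_*\). The reverse inequality is \cref{prop:width-properties}(i), so the excess width is \(0\).

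\emph{Part (ii).} The argument is the same with \(o\) replaced by the inclusion \(j:O\hookrightarrow\calE_K(M)\). By hypothesis every \(\alpha\) equals \(j_*[\psi]\) for some based map \(\psi:(S^i,*)\to(O,\gamma_*)\). Then \(j\circ\psi\) represents \(\alpha\) and has peak \(\Lambda_*\), so \(\wid_i^K(\alpha;\gamma_*)=\Lambda_*\). Taking the supremum over \(\alpha\) gives \(\sigma_i(K;\gamma_*)=\Lambda_*\).

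The only step that needs care is the invariance of thickness under a general Riemannian isometry, that is, checking that \(\Thi_g\) (the normal injectivity radius) is natural under isometries. This follows directly from naturality of the exponential map, \(g\circ\exp_p=\exp_{g(p)}\circ dg_p\), together with the fact that \(dg\) carries normal bundles to normal bundles isometrically. Everything else is formal.
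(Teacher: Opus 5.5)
Your proof is correct and follows the paper's argument: isometries preserve length and thickness, so the orbit \(O\) lies in the level set \(\{\Rop=\Lambda_*\}\). Any based representative factoring through \(O\) therefore has peak \(\Lambda_*\), and \cref{prop:width-properties}(i) supplies the reverse inequality. Your explicit checks of continuity of the orbit map and of naturality of the normal injectivity radius under isometries are details the paper leaves implicit.
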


\begin{proof}
Isometries preserve length and thickness, so \(\Rop(g\circ\gamma_*)=
\Rop(\gamma_*)=\Lambda_*\) for all \(g\in G\); thus \(O\) lies in the level
set \(\{\Rop=\Lambda_*\}\).  (Only this property of \(O\) is used, so the
statement holds verbatim in any of the models of \cref{sec:knotspaces} and
for any filtration function that is constant on \(G\)-orbits.)  A based representative of a class in the image
of \(\pi_i(O)\) has all its values in \(O\), hence peak ropelength
\(\Lambda_*\), so its width is at most \(\Lambda_*\); by
\cref{prop:width-properties}(i) it equals \(\Lambda_*\).  Statement (ii) is
the same argument applied to every class.
\end{proof}

\begin{corollary}[The Gramain class costs no rope]
\label{cor:gramain-width}
Let \(f\) be a long knot agreeing outside a compact set with a fixed line
through the origin, and let \(\Rop_c\) denote ropelength computed after
one-point compactification into the round \(S^3\)
\textup{(\cref{subsec:longknots})}.  Then
\[
  \wid_1([G_f];f)=\Rop_c(f),
  \qquad\text{that is}\qquad
  \exc_1([G_f];f)=0 .
\]
In particular, for a nontrivial long knot the excess width seminorm
\(\exc_1\) on \(\pi_1(\calK_f,f)\) is \emph{not} a norm: it vanishes on the
nontrivial class \([G_f]\) of \cref{thm:gramain}.
\end{corollary}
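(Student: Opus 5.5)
The plan is to deduce the corollary from \cref{thm:orbit-width}(i). The group is the circle \(G=SO(2)\subset SO(3)\) of rotations \(R_t\) about the fixed line through the origin along which \(f\) agrees outside a compact set. The first thing to check is that this circle acts on the relevant model by ropelength-preserving maps. In \(\R^3\), each \(R_t\) is an isometry fixing the line pointwise, so \(R_t\circ f\) is again a long knot agreeing with the same line outside the same compact set. Hence \(G\) preserves \(\calK_f\). For \(\Rop_c\) we use the one-point compactification described in \cref{subsec:longknots}: inverse stereographic projection to the round \(S^3\), with the point at infinity sent to a pole. I would verify that rotations of \(\R^3\) about an axis through the origin extend to isometries of the round \(S^3\), namely elements of \(SO(4)\) fixing the two poles and the great circle that compactifies the line. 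This holds because stereographic projection from a pole intertwines \(SO(3)\subset SO(4)\), acting on the last three coordinates, with the linear action on \(\R^3\). Consequently \(\Rop_c(R_t\circ f)=\Rop_c(f)\) for all \(t\), since \(\Rop_{S^3}\) is defined from the round metric and is invariant under isometries.

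Next, the Gramain loop \(G_f(t)=R_t\circ f\), for \(t\in[0,2\pi]\), is exactly the orbit map \(o\colon SO(2)\to\calK_f\) precomposed with the generator \([0,2\pi]\to SO(2)\). Hence \([G_f]\in\im[o_*\colon\pi_1(SO(2))\to\pi_1(\calK_f,f)]\). By \cref{thm:orbit-width}(i), whose proof the paper notes applies verbatim in any model and for any filtration function constant on \(G\)-orbits, the peak of \(\Rop_c\) along \(G_f\) equals \(\Rop_c(f)=\Lambda_*\). So \(\wid_1([G_f];f)\le\Rop_c(f)\). The reverse inequality is \cref{prop:width-properties}(i), since the basepoint lies on every based representative. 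Together these give \(\exc_1([G_f];f)=0\).

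For the final sentence, \cref{thm:gramain} gives \([G_f]\neq 1\) when \(f\) is nontrivial, while \(\exc_1\) vanishes on it. A group seminorm that vanishes on a nonidentity element is not a norm.

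The main obstacle is the compactified model. We must make sure that \(\Rop_c\) is well defined and finite for long knots, so that the image in \(S^3\) is a \(C^{1,1}\) closed curve through the pole with positive normal injectivity radius, and that the rotation circle really consists of round isometries. The first of these is what the definition in \cref{subsec:longknots} is presumably set up to guarantee, so I would simply invoke it. The second is the explicit check on stereographic projection described above. Once both hold, the rest is a formal consequence of \cref{thm:orbit-width}.
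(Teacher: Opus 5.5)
Your proposal is correct and follows the paper's proof: rotations about the axis through the origin extend to elements of \(SO(3)\subset SO(4)\) acting isometrically on the round \(S^3\), the Gramain loop is the orbit of \(f\) under this circle, and \cref{thm:orbit-width}(i) (in its any-model, any-orbit-invariant-filtration form) gives zero excess width. Your extra checks are details the paper leaves implicit: the reverse inequality via \cref{prop:width-properties}(i), finiteness of \(\Rop_c\) (the closure near the pole is a great-circle arc), and the need for the line to pass through the origin.
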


\begin{proof}
Rotation about the axis is an element of \(SO(3)\subset SO(4)\) fixing the
two poles of the compactification, hence acts on \(S^3\) by isometries.  The
Gramain loop is the orbit of \(f\) under the corresponding circle subgroup,
so \cref{thm:orbit-width}(i) applies with \(G=S^1\).
\end{proof}

\begin{remark}
\Cref{cor:gramain-width} settles, negatively, a question that the formal
theory leaves open: excess width could a priori have been a norm,
detecting every nontrivial class.  It is not.  The right intuition is that
\emph{width measures the geometric cost of a family beyond what rigid
motions already provide}, and rigid motions provide a great deal.
\end{remark}

\subsection{A vanishing theorem}

For closed knots in \(S^3\) the situation is more dramatic, because
Hatcher's theorems say that in the best understood cases the knot space is
\emph{nothing but} an orbit of the isometry group.

\begin{definition}[Orbit realisation]\label{hyp:orbit-retract}
A knot type \(K\subset S^3\) is \emph{orbit realised} if there is a smooth
representative \(J_0\) such that the inclusion of the orbit
\(SO(4)\cdot J_0\) into \(\mathscr K_K(S^3)\) is surjective on \(\pi_i\) for
every \(i\ge0\).
\end{definition}

\begin{theorem}[Vanishing of excess width]\label{thm:vanishing-width}
Let \(K\) be orbit realised by \(J_0\), and take \(J_0\) as basepoint.  Then
for every \(i\ge1\) and every
\(\alpha\in\pi_i(\mathscr K_K(S^3),J_0)\),
\[
  \wid_i(\alpha;J_0)=\Rop_{S^3}(J_0),
  \qquad
  \exc_i(\alpha;J_0)=0,
  \qquad
  \sigma_i(K;J_0)=\Rop_{S^3}(J_0).
\]
Consequently the persistent homotopy of the spherical ropelength filtration
of \(\mathscr K_K(S^3)\) surjects onto its stable value at the single finite
level \(\Rop_{S^3}(J_0)\), in every degree simultaneously.
\end{theorem}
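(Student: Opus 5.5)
The plan is to deduce the statement from \cref{thm:orbit-width}(ii), applied in the unparameterised model \(\mathscr K_K(S^3)\) with \(G=SO(4)\) acting by isometries of the round \(S^3\) and \(\gamma_*=J_0\). The proof of that theorem uses only one property of the orbit: the filtration function is constant on it. It was noted there that this holds in any of the models of \cref{sec:knotspaces}. So the first step is to check that \(\Rop_{S^3}\) is constant on \(O=SO(4)\cdot J_0\subset\mathscr K_K(S^3)\). This is immediate: for \(g\in SO(4)\), the map \(g\) is an isometry of \((S^3,g_{\rm round})\). It therefore carries the normal exponential map of \(J_0\) to that of \(g\circ J_0\), so it preserves the normal injectivity radius \(\Thi_{S^3}\) and the length \(\Len_{S^3}\). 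Reparametrisation changes neither quantity, so \(\Rop_{S^3}\) descends to \(\mathscr K_K(S^3)\) and equals \(\Rop_{S^3}(J_0)=:\Lambda_*\) on \(O\).

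Next I will run the width argument in this model. Fix \(i\ge1\) and \(\alpha\in\pi_i(\mathscr K_K(S^3),J_0)\). By \cref{hyp:orbit-retract}, the map \(\pi_i(O,J_0)\to\pi_i(\mathscr K_K(S^3),J_0)\) is surjective. Choose a based map \(F:(S^i,*)\to(O,J_0)\) whose composite with the inclusion represents \(\alpha\). Every value of \(F\) has ropelength \(\Lambda_*\), so the peak of \(F\) is \(\Lambda_*\), and \(\wid_i(\alpha;J_0)\le\Lambda_*\). The reverse inequality is \cref{prop:width-properties}(i), read in the quotient model; its proof only uses that the basepoint lies in every based representative. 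Together these give \(\wid_i(\alpha;J_0)=\Lambda_*\) and \(\exc_i(\alpha;J_0)=0\). Taking the supremum over \(\alpha\) gives \(\sigma_i(K;J_0)=\Lambda_*\).

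For the final sentence, I will observe that the representative \(F\) above lies in the closed sublevel \(\mathscr K_{K,\Lambda_*}(S^3)\). Hence every class of \(\pi_i(\mathscr K_K(S^3),J_0)\) lies in the image of \(\pi_i(\mathscr K_{K,\Lambda_*},J_0)\), for all \(i\ge1\) at once. In degree \(0\), \(\mathscr K_K(S^3)\) is path connected, so surjectivity is trivial. By \cref{cor:quotient-exhaustion}, the colimit of the persistent groups is the stable value. So the structural map from level \(\Lambda_*\) to the colimit is surjective, and so is the map from every level \(\Lambda\ge\Lambda_*\).

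The main obstacle I expect is bookkeeping, not geometry. Width was defined in \cref{def:width} for \(\calE_K(M)\), while the statement concerns \(\mathscr K_K(S^3)\). I must make sure the definitions, and \cref{prop:width-properties}(i), transfer verbatim to the quotient model with the descended function. That requires \(\Rop_{S^3}\) to be well defined on \(\mathscr K_K(S^3)\), which needs invariance under all of \(\Diff(S^1)\), including orientation reversal, and finiteness on smooth representatives. I would address this by stating explicitly that \cref{def:width} is read in \(\mathscr K_K(S^3)\) with basepoint \(J_0\), and citing the parenthetical remark in the proof of \cref{thm:orbit-width}. No Hatcher input is used beyond the orbit-realisation hypothesis itself.
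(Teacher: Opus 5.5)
Your proposal is correct and is essentially the paper's proof. The paper simply invokes \cref{thm:orbit-width}(ii) with \(G=SO(4)\) acting isometrically on the round \(S^3\). Your extra bookkeeping spells out what the paper leaves to the parenthetical remark in the proof of \cref{thm:orbit-width}: that \(\Rop_{S^3}\) is well defined on \(\mathscr K_K(S^3)\), that \cref{def:width} and \cref{prop:width-properties}(i) can be read in the quotient model, and that surjectivity holds explicitly at level \(\Rop_{S^3}(J_0)\).
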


\begin{proof}
Immediate from \cref{thm:orbit-width}(ii) with \(G=SO(4)\), which acts on
the round \(S^3\) by isometries.
\end{proof}

\begin{corollary}\label{cor:vanishing-torus-hyperbolic}
Every nontrivial torus knot and every hyperbolic knot in the round
\(S^3\) is orbit realised by a maximally symmetric placement as in
\cref{thm:hatcher-models}\textup{(i)},\textup{(ii)}.  With such a placement
as basepoint, every \emph{stable} positive-degree homotopy class has zero
excess width.
\end{corollary}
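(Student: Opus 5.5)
The plan is to reduce the corollary to \cref{thm:vanishing-width}.  For that we only need to check that each nontrivial torus knot and each hyperbolic knot is orbit realised in the sense of \cref{hyp:orbit-retract}, with the symmetric placement \(J_0\) of \cref{thm:hatcher-models} as realising representative.  Once this is established, \cref{thm:vanishing-width} (equivalently \cref{thm:orbit-width}(ii) with \(G=SO(4)\)) gives \(\exc_i(\alpha;J_0)=0\) for every \(i\ge1\) and every \(\alpha\).  Since widths are defined on classes of the full knot space \(\mathscr K_K(S^3)\), these are exactly the stable classes.

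\emph{Torus knots.}  Place \(K\) as a standard \((p,q)\) curve on the Clifford torus, which is a smooth embedding.  Let \(G_K\le SO(4)\) be its stabiliser, so that the orbit map induces \(SO(4)/G_K\cong SO(4)\cdot J_0\subset\mathscr K_K(S^3)\).  Hatcher's theorem \cref{thm:hatcher-models}(i) asserts that \(\mathscr K_K(S^3)\simeq SO(4)/G_K\).  The point to verify is that this equivalence is realised by the orbit inclusion itself, and not by some abstract equivalence.  This holds in Hatcher's construction \cite{HatcherSpaces,HatcherModuli}: he shows that the inclusion of the space of ``standard'' placements, which is the \(SO(4)\)-orbit, is a homotopy equivalence, via the Smale conjecture and the Seifert fibred structure of the complement.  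Hence the inclusion is a weak equivalence, and in particular it is surjective on \(\pi_i\) for all \(i\ge0\).

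\emph{Hyperbolic knots.}  Use Mostow rigidity together with equivariant linearisation.  The finite group \(\Gamma_K\) of orientation-preserving isometries of \(S^3\setminus K\) extends to a smooth action on \((S^3,K)\).  By the linearisation result (now a theorem, \cref{rem:linearisation}, via \cite{DinkelbachLeeb}) this action is conjugate to an action by elements of \(SO(4)\).  After conjugating we obtain a placement \(J_0\) whose \(SO(4)\)-stabiliser contains \(\Gamma_K\); this is the maximally symmetric placement.  Hatcher's equivalence \(\mathscr K_K(S^3)\simeq SO(4)/\Gamma_K\) in \cref{thm:hatcher-models}(ii) is again induced by the orbit map \(g\mapsto g\cdot J_0\), so the orbit inclusion is surjective on all \(\pi_i\).

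The main obstacle is precisely this identification of Hatcher's abstract homotopy equivalences with the concrete orbit inclusion at our chosen basepoint.  The difficulty is sharpest in the hyperbolic case, where the stabiliser must be exactly \(\Gamma_K\) rather than a proper subgroup.  If the stabiliser were smaller, the orbit map would still factor as \(SO(4)/\mathrm{Stab}\to SO(4)/\Gamma_K\), a covering, and surjectivity on \(\pi_i\) for \(i\ge2\) would survive, but \(\pi_1\) and \(\pi_0\) could fail.  I would first check that any smooth isotopy of \(J_0\) to itself induces an isometry of the complement by Mostow, and then that the linearised action makes every such symmetry an element of \(\mathrm{Stab}(J_0)\).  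This gives \(\mathrm{Stab}(J_0)=\Gamma_K\) for orientation-preserving symmetries.  Finally, the positive-thickness and finite-ropelength requirement is automatic, because \(J_0\) is smooth; the conclusion then follows by \cref{thm:vanishing-width}.
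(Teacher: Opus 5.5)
Your proposal is correct and follows the paper's own route. The paper likewise notes that in cases (i) and (ii) of \cref{thm:hatcher-models} Hatcher's equivalence is induced by the \(SO(4)\)-orbit map through the symmetric placement, with stabiliser \(G_K\) or \(\Gamma_K\) and using \cite{DinkelbachLeeb} in the hyperbolic case; hence the orbit inclusion is \(\pi_i\)-surjective and \cref{thm:vanishing-width} applies. One small slip in your side remark: \(SO(4)\) is connected, so a too-small stabiliser could only spoil surjectivity on \(\pi_1\), never on \(\pi_0\).
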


\begin{proof}
In cases (i) and (ii) of \cref{thm:hatcher-models} the homotopy equivalence
is induced by the orbit map of \(SO(4)\) through the symmetric placement
\(J_0\), whose stabiliser is \(G_K\) respectively \(\Gamma_K\); the
inclusion of the orbit is therefore a homotopy equivalence and in particular
\(\pi_i\)-surjective.  Apply \cref{thm:vanishing-width}.  (See
\cref{rem:linearisation} for the input needed in the hyperbolic case.)
\end{proof}

\begin{figure}[t]
\centering
\begin{tikzpicture}[scale=1.0]
\draw[fill=ropesand,draw=ropegrey,line width=0.7pt,rounded corners=18pt]
  (0,-0.35) rectangle (6.6,2.5);
\node[font=\scriptsize,below right] at (0.08,-0.4) {$\mathscr K_K(S^3)$};
% level set = orbit
\draw[ropeblue,line width=1.2pt,fill=ropelight]
  (3.3,1.05) ellipse (2.35 and 0.72);
\draw[ropeblue,line width=1.2pt,fill=ropesand]
  (3.3,1.05) ellipse (1.1 and 0.3);
\node[ropeblue,font=\scriptsize,align=center] at (3.3,2.05)
  {orbit $SO(4)\cdot J_0$: one level set of $\Rop_{S^3}$};
\filldraw[ropered] (1.15,1.28) circle (1.6pt);
\node[ropered,font=\scriptsize,left] at (1.1,1.3) {$J_0$};
\draw[ropered,line width=1pt,->,>=Stealth]
  (5.05,0.02) .. controls (4.85,0.2) and (4.75,0.35) .. (4.6,0.52);
\node[ropered,font=\scriptsize,right] at (5.05,-0.05) {any family};
\end{tikzpicture}
\caption{\Cref{thm:vanishing-width}.  If the terminal knot space is
homotopically represented by one isometry orbit, every \emph{stable}
homotopy class has a representative in a single ropelength level.  This is a
statement about the image in the terminal homotopy group; finite-level
transient classes can still be present.}
\label{fig:orbit-level}
\end{figure}

\begin{remark}[How to read a vanishing theorem]
\label{rem:reading-vanishing}
\Cref{thm:vanishing-width} is a negative result, and it is the most useful
statement in this section.  It says that for torus and hyperbolic knots the
positive-degree width invariants carry no information at all, so that any
search for interesting positive-degree phenomena among them is doomed.  It
does \emph{not} say that the filtration is uninteresting for these knots:
the levels \emph{below} \(\Rop_{S^3}(J_0)\) may well carry finite-level
classes that later die (\cref{subsec:death}), and degree zero is untouched
by the theorem, since \(\mathscr K_K(S^3)\) is connected while
\(Y_\Lambda(K)\) need not be.
\end{remark}

\subsection{Where positive stable width might live}

By \cref{thm:hatcher-models}(iii), satellite models contain
\(X_K\)-directions that are not generated solely by ambient isometries.
There is therefore no formal reason for families in these directions to
have constant ropelength.  This makes satellites natural test cases for
positive \emph{stable} excess width.  It is a motivation, not a theorem that
positive width must occur there.

\begin{problem}[Satellite widths]\label{prob:satellite-width}
Let \(K\) be a satellite knot and let \(\alpha\) be a homotopy class
coming from a non-isometric direction of Hatcher's parameter space \(X_K\)
(for example a torus factor in a nested satellite).  Compute or estimate
its excess width in the round-sphere filtration.  Is it positive for some
satellite knot?
\end{problem}

\begin{problem}[The first positive-degree phenomenon]
\label{prob:first-positive-width}
Exhibit a knot type \(K\), a degree \(i\ge1\), and a class
\(\alpha\in\pi_i(\calE_K(M))\) with \(\exc_i(\alpha)>0\); or prove that
excess widths vanish identically for all knot types.
\end{problem}

We regard \cref{prob:first-positive-width} as the central concrete question
raised by this paper.  As of this writing, \emph{no} example of a positive
excess width is known in any degree \(i\ge1\), while in degree zero the
corresponding phenomenon is a theorem for links
(\cref{subsec:gordian}).

\subsection{Death levels and transient classes}
\label{subsec:death}

\begin{definition}[Death level]\label{def:death}
For \(a\in\pi_i(\calE_{K,\Lambda},\gamma_*)\) put
\[
  \mathfrak d(a)=\inf\{\Lambda'\ge\Lambda:\ a\mapsto1\ \text{in}\
  \pi_i(\calE_{K,\Lambda'},\gamma_*)\},
\]
with \(\mathfrak d(a)=\infty\) if no such level exists.
\end{definition}

\begin{corollary}[Finite death criterion]\label{cor:finite-death}
A finite-level class \(a\) maps to the identity in \(\pi_i(\calE_K)\) if and
only if \(\mathfrak d(a)<\infty\).  In particular, if \(\calE_K\) is
aspherical then every finite-level class of degree \(i\ge2\) has finite
death level.
\end{corollary}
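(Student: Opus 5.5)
The argument runs through the injectivity half of \cref{thm:homotopical-exhaustion}, applied to a single class. Let \(a\in\pi_i(\calE_{K,\Lambda},\gamma_*)\) be a finite-level class, and write \(\iota_{\Lambda'}\) for the inclusion-induced map into \(\pi_i(\calE_{K,\Lambda'},\gamma_*)\) for \(\Lambda'\ge\Lambda\), and \(\iota_\infty\) for the map into \(\pi_i(\calE_K,\gamma_*)\). Since the inclusions compose, \(\iota_\infty\) factors through every \(\iota_{\Lambda'}\).

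For the ``if'' direction, suppose \(\mathfrak d(a)<\infty\). By the definition of the infimum there is some finite \(\Lambda'\ge\Lambda\) with \(\iota_{\Lambda'}(a)=1\). Composing with the inclusion \(\calE_{K,\Lambda'}\hookrightarrow\calE_K\) gives \(\iota_\infty(a)=1\).

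For the ``only if'' direction, suppose \(\iota_\infty(a)=1\), and choose a based representative \(F:(S^i,*)\to(\calE_{K,\Lambda},\gamma_*)\) of \(a\).
\begin{itemize}
\item For \(i\ge1\), there is a based null-homotopy of \(F\) in \(\calE_K\). Equivalently, \(F\) extends to a map \(H:(D^{i+1},*)\to(\calE_K,\gamma_*)\).
\item For \(i=0\), \(a\) is a path component containing a point \(\gamma\), and there is a path from \(\gamma\) to \(\gamma_*\) in \(\calE_K\). We take \(H\) to be this path.
\end{itemize}
In both cases the domain of \(H\) is compact, so \cref{thm:compact-family-bound} places its image in some \(\calE_{K,\Lambda''}\). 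Put \(\Lambda'=\max\{\Lambda,\Lambda''\}\). Then \(H\) is a null-homotopy, or a connecting path, inside \(\calE_{K,\Lambda'}\), so \(\iota_{\Lambda'}(a)=1\) and \(\mathfrak d(a)\le\Lambda'<\infty\). We do not need to know whether the infimum defining \(\mathfrak d(a)\) is attained.

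The second sentence follows at once. If \(\calE_K\) is aspherical, then \(\pi_i(\calE_K,\gamma_*)=1\) for \(i\ge2\), so every finite-level class of degree \(i\ge2\) maps to the identity. The first part then gives \(\mathfrak d(a)<\infty\).

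There is no real obstacle here. The only points to check are that the null-homotopy can be taken based, so that it lies in the based sublevel, and the bookkeeping for \(i=0\).
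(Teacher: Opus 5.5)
Your proof is correct and follows the paper's route. The paper cites the injectivity half of \cref{thm:homotopical-exhaustion} for the forward implication and calls the converse trivial; you unpack that injectivity by applying \cref{thm:compact-family-bound} directly to the null-homotopy for \(i\ge1\), or to the connecting path for \(i=0\), of the single class \(a\).
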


\begin{proof}
Injectivity in \cref{thm:homotopical-exhaustion} gives the forward
implication; the converse is trivial.
\end{proof}

\begin{corollary}[Transient higher homotopy of long knots]
\label{cor:transient-higher}
Let \(f\) be any long knot and \(i\ge2\).  Every class in
\(\pi_i(\calK_{f,\Lambda},f)\) dies at a finite level.
\end{corollary}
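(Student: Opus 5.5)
The plan is to combine \cref{cor:finite-death} with the Hatcher--Budney computation of the homotopy type of long-knot components, \cref{thm:budney-long}. Let \(a\in\pi_i(\calK_{f,\Lambda},f)\) with \(i\ge2\), and write \(\iota:\calK_{f,\Lambda}\hookrightarrow\calK_f\) for the inclusion. The first step is to observe that \(\calK_f\) is aspherical. For the unknot, torus knots and hyperbolic knots this is read off directly from the models \(\ast\), \(S^1\), \(S^1\times S^1\). In general it is the asphericity statement of \cref{thm:budney-long}, which is proved by induction along the JSJ tree through fibre bundles with aspherical fibre and base. Hence \(\pi_i(\calK_f,f)=0\), so \(\iota_*(a)=1\).

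The second step is to apply the finite death criterion \cref{cor:finite-death}. We need that the exhaustion theorem is available for long knots. This is because \cref{thm:homotopical-exhaustion} only uses that \(\Rop\) is locally bounded on the ambient space. So I would check that the filtration function on \(\calK_f\) is locally bounded. For the compactified ropelength \(\Rop_c\) used in \cref{cor:gramain-width}, a long knot compactifies to a smooth closed knot in the round \(S^3\) through the point at infinity. I intend to verify that the compactification map \(\calK\to\Emb^\infty(S^1,S^3)\) is continuous; then \cref{lem:local-tubular-stability} in \(M=S^3\) gives local boundedness by pullback. Then \cref{thm:compact-family-bound} holds on \(\calK_f\), and so does the injectivity half of \cref{thm:homotopical-exhaustion}.

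With this in place, the argument is short. A based null-homotopy of a representative of \(a\) in \(\calK_f\) is a map from a disc \(D^{i+1}\), which is compact. By compact capture, its image lies in some \(\calK_{f,\Lambda'}\), and we may take \(\Lambda'\ge\Lambda\). Therefore \(a\) maps to \(1\) in \(\pi_i(\calK_{f,\Lambda'},f)\), and \(\mathfrak d(a)\le\Lambda'<\infty\).

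The main obstacle is not the homotopy theory but the transfer of local tubular stability to the long-knot model. The topology on \(\calK\) controls embeddings only on compact sets, with the knot fixed to a line outside a compact interval. Near the point at infinity, the compactified curve must still have controlled thickness. Since all members agree with the fixed line near infinity, the compactified curves coincide with a fixed great circle near the pole. I would handle this by working in the subspace of knots supported in a fixed ball, which is where compact families live, so that the normal-graph argument of Step 1 of \cref{lem:local-tubular-stability} applies uniformly. If one prefers to filter by Euclidean ropelength of the knotted part instead, the same compactness argument applies verbatim.
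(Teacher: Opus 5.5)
Your proposal is correct and follows the paper's proof: asphericity of \(\calK_f\) from \cref{thm:budney-long} makes the stable group vanish, and \cref{cor:finite-death} then gives a finite death level. The paper packages your compactification-and-compact-capture step as \cref{thm:longknot-exhaustion}, which is proved as you sketch: push a compact family of long knots into \(\Emb^\infty(S^1,S^3)\) by the closure map \(c\), then apply \cref{thm:compact-family-bound}.
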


\begin{proof}
\(\calK_f\) is a \(K(\pi,1)\) by \cref{thm:budney-long}, so the stable group
vanishes; apply \cref{cor:finite-death} and
\cref{thm:longknot-exhaustion}.
\end{proof}

This is a prediction of a purely geometric kind, and it is the cheapest source of
possible examples for \cref{prob:first-positive-width}: a nonzero class in
\(\pi_i(\calK_{f,\Lambda})\), \(i\ge2\), would be a family of thick knots
that cannot be filled below level \(\Lambda\) but can be filled above it, so
its birth and death levels bracket a finite-scale phenomenon
(\cref{fig:barcode}).

\begin{figure}[t]
\centering
\begin{tikzpicture}[x=0.8cm,y=0.55cm]
\draw[->,>=Stealth,line width=0.8pt] (0,-0.6) -- (9.2,-0.6);
\node[font=\scriptsize,right] at (9.25,-0.6) {$\Lambda$};
\foreach \x in {1,3,5,7}{\draw[line width=0.5pt] (\x,-0.6) -- (\x,-0.85);}
\node[font=\scriptsize,below] at (1,-0.9) {$m_K$};
\node[font=\scriptsize,below] at (3,-0.9) {};
\node[font=\scriptsize,below] at (7,-0.9) {$\sigma_i(K)$};
% bars
\draw[ropeblue,line width=3.4pt] (1.2,3.0) -- (8.9,3.0);
\node[font=\scriptsize,left] at (1.1,3.0) {stable class};
\draw[ropegreen,line width=3.4pt] (2.4,1.9) -- (8.9,1.9);
\node[font=\scriptsize,left] at (2.3,1.9) {stable class};
\draw[ropered,line width=3.4pt] (1.6,0.8) -- (4.6,0.8);
\node[font=\scriptsize,left] at (1.5,0.8) {transient};
\draw[ropered,line width=3.4pt] (3.4,-0.1) -- (5.6,-0.1);
\node[font=\scriptsize,left] at (3.3,-0.1) {transient};
\draw[ropegrey,dashed,line width=0.7pt] (8.9,-0.4) -- (8.9,3.5);
\node[font=\scriptsize,above,ropegrey] at (8.9,3.5) {$\Lambda\to\infty$};
\end{tikzpicture}
\caption{The picture the invariants are trying to draw: classes that survive
to the limit have a birth level (their width) and no death; transient
classes have both.  \Cref{cor:transient-higher} says that for long knots
\emph{all} classes of degree \(\ge2\) are transient.  This picture is a
heuristic, not a theorem: see \cref{rem:no-barcodes}.}
\label{fig:barcode}
\end{figure}

\begin{remark}[Why we do not assert barcodes]\label{rem:no-barcodes}
In degree \(1\) the persistent objects are nonabelian.  In degree \(\ge2\)
they are abelian but are not known to be finitely generated, and the
structural maps are not known to satisfy any tameness condition.  A barcode
decomposition therefore cannot be invoked, even over a field, without first
proving pointwise finite dimensionality.  Widths and death levels, by
contrast, are defined without any such hypothesis, which is why the paper
uses them.  See \cref{prob:tameness}.
\end{remark}

\subsection{A dictionary}

\begin{table}[htbp]
\small
\begin{tabular}{@{}p{5.05cm}p{7.05cm}@{}}
\toprule
\textbf{geometric side} & \textbf{topological side}\\
\midrule
rope budget \(\Lambda\) & sublevel \(\calE_{K,\Lambda}\) or
  \(Y_\Lambda(K)\)\\
tight (ideal) configurations & the minimum level set \(I(K)=Y_{m_K}(K)\)\\
two configurations cannot be connected within budget &
  they lie in different components of \(Y_\Lambda(K)\)\\
level at which connected components merge & connected merge height \(\mu_K\)\\
min--max budget for an actual connecting path & path merge \(\mu_K^{\rm path}\)\\
extra rope needed to perform a cyclic motion & excess width \(\exc_1\)\\
a family of motions that only exists with slack &
  a class born at a finite level\\
slack that becomes unnecessary & a class with finite death level\\
budget above which nothing new appears & stabilisation level \(\sigma_i(K)\)\\
\bottomrule
\end{tabular}
\caption{The dictionary used throughout the paper.}
\label{tab:dictionary}
\end{table}

\section{Degree zero: thick isotopy, barriers, and merge ultrametrics}
\label{sec:degreezero}

This section contains the part of the theory where something is actually
known to happen.  We work in the normalised Euclidean model, where length is
a function and the questions are about connectivity of its sublevels.

\subsection{The normalised model}

\begin{definition}[Normalised sublevels]\label{def:Y}
Let \(K\) be a tame knot type.  Represent configurations by constant-speed
\(C^{1,1}\) embeddings \(\gamma:S^1=\R/2\pi\Z\to\R^3\) and identify two of
them when they differ by an orientation-preserving Euclidean isometry and a
rotation or reflection of the source circle.  For \(m_K\le\Lambda<\infty\)
put
\[
  Y_\Lambda(K)=\bigl\{[\gamma]:\ [\gamma]=K,\
  \Thi(\gamma)\ge1,\ \Len(\gamma)\le\Lambda\bigr\},
  \qquad
  Y_\infty(K)=\bigcup_{\Lambda}Y_\Lambda(K),
\]
topologised as a quotient of the constant-speed \(C^1\) topology.  The
\emph{ideal stratum} is \(I(K)=Y_{m_K}(K)\), nonempty by
\cref{thm:cks-existence}.
\end{definition}

\subsection{Sublevels versus fixed-length physical ropes}

For \(L\ge m_K\), let
\[
  P_L(K)=\{[\gamma]\in Y_L(K):\Len(\gamma)=L\}
\]
be the exact-length shell.  This is the natural configuration space for a
rope of fixed core length \(L\) carrying an embedded unit-radius tube: a
path in \(P_L(K)\) is precisely a fixed-length thick isotopy after
quotienting by rigid motions.

\begin{proposition}[Exact-length retraction]\label{prop:fixed-length-retract}
For every \(L\ge m_K\), the inclusion \(P_L(K)\hookrightarrow Y_L(K)\) is a
strong deformation retract.  In particular two points of \(P_L(K)\) are
joined by a path in \(Y_L(K)\) if and only if they are joined by a
fixed-length thick isotopy inside \(P_L(K)\).
\end{proposition}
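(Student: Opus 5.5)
The plan is to retract by \emph{rescaling}, using only the scale behaviour of length and thickness recorded in \cref{fig:scale}. Given a constant-speed representative \(\gamma\) of a class in \(Y_L(K)\), write \(\ell(\gamma)=\Len(\gamma)\in[m_K,L]\) and let \(c(\gamma)\) be its barycentre \(\frac1{2\pi}\int_{S^1}\gamma\). For \(t\in[0,1]\) put
\[
  \lambda_t(\gamma)=1+t\Bigl(\frac{L}{\ell(\gamma)}-1\Bigr),\qquad
  H_t(\gamma)=c(\gamma)+\lambda_t(\gamma)\bigl(\gamma-c(\gamma)\bigr).
\]
Since \(\lambda_t\ge1\), we get \(\Thi(H_t\gamma)=\lambda_t\Thi(\gamma)\ge1\) and \(\Len(H_t\gamma)=\lambda_t\ell(\gamma)\le L\). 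Scaling preserves the knot type, the \(C^{1,1}\) regularity and constant speed. So \(H_t\) maps \(Y_L(K)\) into itself, with \(H_0=\id\) and \(\Len(H_1\gamma)=L\), i.e.\ \(H_1\) lands in \(P_L(K)\). On \(P_L(K)\) we have \(\lambda_t\equiv1\), so \(H_t\) fixes it pointwise for all \(t\). This is exactly a strong deformation retraction, provided \(H\) is well defined and continuous.

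Well-definedness on the quotient is the first check. The homothety is taken about the barycentre, which is equivariant under Euclidean isometries: \(c(A\gamma)=Ac(\gamma)\), and conjugating a homothety about \(c\) by \(A\) gives the homothety about \(Ac\). Both the barycentre and the length are invariant under rotations and reflections of the source circle, so \(H_t\) commutes with the whole identifying group. Upstairs, on constant-speed \(C^1\) curves, \(\ell(\gamma)=2\pi|\gamma'|\) and \(c(\gamma)\) are continuous in the \(C^1\) topology, and \(\ell\ge m_K>0\). Hence \((\gamma,t)\mapsto H_t\gamma\) is continuous upstairs.

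The step needing care is descending continuity to \(Y_L(K)\times[0,1]\). If \(q\) is the quotient map onto \(Y_L(K)\), then \(q\times\id_{[0,1]}\) is again a quotient map because \([0,1]\) is locally compact Hausdorff (Whitehead's lemma). An equivariant continuous map upstairs therefore induces a continuous map on \(Y_L(K)\times[0,1]\). The tube condition needs no further thought here, since it is built into the scaling computation rather than being an analytic closedness statement.

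For the final assertion, suppose two points of \(P_L(K)\) are joined by a path \(\beta\) in \(Y_L(K)\). Then \(H_1\circ\beta\) is a path in \(P_L(K)\) with the same endpoints, because \(H_1\) fixes \(P_L(K)\). A path in \(P_L(K)\) is, by the definition of the shell, a fixed-length thick isotopy modulo rigid motions. The converse holds because \(P_L(K)\subset Y_L(K)\).
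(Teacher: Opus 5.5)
Your proposal is correct and follows the paper's proof essentially verbatim: the paper also rescales about the arclength centroid by the factor \((1-t)+tL/\ell\), uses that this factor is at least \(1\) to keep thickness \(\ge1\) and length \(\le L\), and invokes equivariance under isometries and source reparametrisations to descend to the quotient. Your added justification that \(q\times\id_{[0,1]}\) is a quotient map, which makes the homotopy continuous on \(Y_L(K)\times[0,1]\), fills in a point the paper leaves implicit.
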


\begin{proof}
For a representative \(\gamma\) of length \(\ell\le L\), scale about its
arclength centroid by
\[
 s_t(\gamma)=(1-t)+t\frac{L}{\ell},\qquad 0\le t\le1.
\]
The resulting curve has length \(s_t\ell\le L\) and thickness
\(s_t\Thi(\gamma)\ge1\), since \(s_t\ge1\).  Scaling about the arclength
centroid is equivariant under Euclidean isometries and is unaffected by the
allowed source reparametrisations, so this construction descends
continuously to the quotient.  It fixes every point with \(\ell=L\), and at
\(t=1\) its image lies in \(P_L(K)\).  Hence it is a strong deformation
retraction.
\end{proof}

\begin{remark}[Why this simple observation matters]\label{rem:physical-shell}
A path in the \emph{sublevel} is sometimes described informally as a
physical isotopy, but a physical rope has fixed length.  The proposition
justifies that language only at the level of \emph{path components}: any
sublevel path between length-\(L\) endpoints can be pushed, relative to its
endpoints, onto the exact-length shell.  It says nothing about connected
sets that are not paths.  This is exactly the distinction used below.
\end{remark}

\begin{convention}[Connected versus path components]\label{conv:conn-pi0}
Because the distinction is essential here, we use
\(\Conn(X)\) for the set of \emph{connected components} of a space \(X\),
and reserve the standard notation \(\pi_0(X)\) for its set of
\emph{path components}.  No equality between these two sets is assumed for
finite ropelength sublevels.
\end{convention}

\subsection{Compactness}

\begin{theorem}[Compactness of normalised sublevels]
\label{thm:knot-sublevel-compact}
For every knot type \(K\) and every finite \(\Lambda\ge m_K\), the space
\(Y_\Lambda(K)\) is compact and Hausdorff.
\end{theorem}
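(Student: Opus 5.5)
The plan is an Arzelà--Ascoli argument on a normalised space of representatives, followed by passage to the quotient. Let \(\widetilde Y_\Lambda\) be the set of constant-speed \(C^{1,1}\) embeddings \(\gamma:\R/2\pi\Z\to\R^3\) of knot type \(K\) with \(\Thi(\gamma)\ge1\), \(\Len(\gamma)\le\Lambda\), and arclength centroid at the origin. Every class in \(Y_\Lambda(K)\) has such a representative, obtained by translating. Hence \(Y_\Lambda(K)\) is the image of \(\widetilde Y_\Lambda\) under the quotient by the compact group \(SO(3)\times O(2)\), which acts by rotations in space and by rotations and reflections of the source circle. So it suffices to show that \(\widetilde Y_\Lambda\) is compact in the \(C^1\) topology. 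Hausdorffness then follows because the quotient of a Hausdorff space by a compact group acting continuously is Hausdorff; equivalently, the orbit relation is closed, and the quotient map is open.

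For compactness I take a sequence \(\gamma_n\in\widetilde Y_\Lambda\) and collect uniform bounds.
\begin{enumerate}[label=\textup{(\roman*)},leftmargin=2.4em]
\item The speed equals \(\ell_n/2\pi\), where \(\ell_n=\Len(\gamma_n)\in[2\pi,\Lambda]\) by \cref{prop:unknot-2pi}.
\item Since the centroid is at the origin, \(\|\gamma_n\|_{C^0}\le\Lambda\).
\item By \cref{thm:thickness-formula}, \(|\kappa|\le1\) almost everywhere, so \(\gamma_n'\) is Lipschitz with constant at most \((\Lambda/2\pi)^2\).
\end{enumerate}
Arzelà--Ascoli then gives a subsequence converging in \(C^1\) to some \(\gamma\). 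The limit inherits three properties.
\begin{enumerate}[label=\textup{(\roman*)},leftmargin=2.4em]
\item It has constant speed \(\ell/2\pi\), where \(\ell=\lim\ell_n\ge2\pi>0\); in particular it is regular.
\item Its tangent is Lipschitz with the same constant, so it is \(C^{1,1}\), and its length satisfies \(\Len(\gamma)=\ell\le\Lambda\).
\item It has centroid at the origin.
\end{enumerate}
Next, \(\Thi(\gamma)\ge1\) follows from the three-point formula \eqref{eq:three-point}, as in \cref{rem:three-point}. For distinct points \(x,y,z\) on \(\gamma\), write them as limits of the corresponding distinct points on \(\gamma_n\). Then \(R(x,y,z)=\lim R(x_n,y_n,z_n)\ge1\), since \(R\) is continuous on triples of distinct points. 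Hence \(\Thi(\gamma)\ge1\).

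The main obstacle is showing that \(\gamma\) is an embedding of the same knot type \(K\). For injectivity, I would use the fact that the bound \(\Thi(\gamma_n)\ge1\) gives the pointwise estimate \(|\gamma_n(s)-\gamma_n(t)|\ge c\min(|s-t|,\delta)\) with constants \(c,\delta\) depending only on \(\Lambda\). This comes from two facts.
\begin{enumerate}[label=\textup{(\roman*)},leftmargin=2.4em]
\item Short arcs with curvature at most \(1\) have chord comparable to arclength.
\item Distant points lie at distance at least a fixed multiple of the thickness. Concretely, take a minimising chord between far-apart points: it is doubly critical, so its length is at least \(2\) by \cref{thm:thickness-formula}. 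Alternatively, if the minimum is attained in the near-diagonal regime, the local estimate in (i) applies.
\end{enumerate}
The estimate passes to the limit, so \(\gamma\) is injective, and \(\Thi(\gamma)\ge1>0\) anyway.

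For the knot type, I would fix \(n\) large with \(\|\gamma_n-\gamma\|_{C^0}<1/2\). Then \(\gamma_n\) lies inside the embedded unit tube around \(\gamma\) and is a normal graph over it: since \(C^1\)-closeness makes the tangents nearly parallel, \(\gamma_n\) meets each normal disc transversally exactly once. The straight-line homotopy \((1-t)\gamma_n+t\gamma\) then stays inside the tube and remains transverse to the normal discs, hence is an isotopy. So \([\gamma]=K\), and \(\gamma\in\widetilde Y_\Lambda\).

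Finally, the quotient map is continuous, so its image \(Y_\Lambda(K)\) of the compact set \(\widetilde Y_\Lambda\) is compact.
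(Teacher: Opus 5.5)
Your proposal is correct and follows essentially the paper's own route: centre at the arclength centroid and normalise to constant speed, use $|\kappa|\le1$ to get the $C^1$ Arzel\`a--Ascoli bounds, pass $\Thi\ge1$ and $\Len\le\Lambda$ to the limit via the three-point formula and continuity of length, keep the knot type by $C^1$-closeness of thick curves, and pass to the quotient by the compact group $SO(3)\times O(2)$. The one difference is that you prove two steps the paper compresses. You give an explicit uniform injectivity estimate, where the paper only says ``in particular $\gamma_\infty$ is embedded''. You also construct the normal-graph isotopy, where the paper cites Cantarella--Kusner--Sullivan; note that the three-point formula should be applied after, not before, injectivity is established.
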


\begin{proof}
Translate each representative so that its arclength centroid is the origin.
Since a closed curve of length \(\le\Lambda\) has diameter \(\le\Lambda/2\),
all centred representatives lie in a fixed ball.

Let \(v=\Len(\gamma)/2\pi\) be the constant speed.  By
\cref{prop:unknot-2pi}, \(2\pi\le\Len(\gamma)\le\Lambda\), so
\(1\le v\le\Lambda/2\pi\).  Thickness at least \(1\) gives \(|\kappa|\le1\)
almost everywhere, hence in the angular parameter
\[
  |\gamma'|\equiv v\le\frac{\Lambda}{2\pi},
  \qquad
  \operatorname{Lip}(\gamma')\le v^2\|\kappa\|_\infty
  \le\Bigl(\frac{\Lambda}{2\pi}\Bigr)^{2}.
\]
So the centred constant-speed representatives, together with their first
derivatives, are uniformly bounded and equicontinuous.  By
Arzel\`a--Ascoli every sequence has a subsequence converging in \(C^1\) to
some \(\gamma_\infty\).

By \cref{rem:three-point} the condition \(\Thi\ge1\) is closed under uniform
convergence, so \(\Thi(\gamma_\infty)\ge1\); in particular
\(\gamma_\infty\) is embedded.  Length is continuous in \(C^1\), so
\(\Len(\gamma_\infty)\le\Lambda\).  Finally, curves of thickness at least
\(1\) that are sufficiently \(C^1\)-close are ambient isotopic
\cite[\S2]{CKS}, so \(\gamma_\infty\) has knot type \(K\).

Thus the set of centred constant-speed representatives is compact in
\(C^1\).  After centring and constant-speed normalisation the residual
symmetries form the compact group \(SO(3)\times O(2)\), and \(Y_\Lambda(K)\)
is exactly its orbit space; a quotient of a compact Hausdorff space by a
compact group is compact Hausdorff.
\end{proof}

\begin{remark}[One component is essential]\label{rem:one-component}
The hypothesis that \(K\) is a \emph{knot} and not a link is doing real
work.  For a split link, one sublink may drift to infinity while length and
thickness stay bounded, so the analogous quotient is not compact and merge
levels need not be attained.  This is exactly the situation in which the
Gordian split link of Coward and Hass \cite{CowardHass} lives, and it is why
their theorem and \cref{thm:merge-ultrametric} are complementary rather than
overlapping statements.
\end{remark}

\subsection{Components, quasicomponents, and right continuity}

\begin{lemma}\label{lem:components-quasi}
In a compact Hausdorff space the connected components coincide with the
quasicomponents; that is, two points lie in the same component if and only
if they cannot be separated by a clopen set.  Moreover a nested intersection
of compact connected sets is compact and connected.
\end{lemma}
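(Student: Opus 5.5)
The lemma has two parts, and I would prove them separately using only point-set topology.

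\emph{Components equal quasicomponents.} Let \(X\) be compact Hausdorff, \(x\in X\), and let \(Q\) be the quasicomponent of \(x\), i.e.\ the intersection of all clopen sets containing \(x\). Every connected set containing \(x\) lies in each clopen set containing \(x\), so the component of \(x\) lies in \(Q\). It remains to show that \(Q\) is connected. Since \(Q\) is closed, it is compact. Suppose \(Q=A\sqcup B\) with \(A,B\) disjoint, closed in \(Q\) (hence in \(X\)), and \(x\in A\). Compact Hausdorff spaces are normal, so there are disjoint open sets \(U\supset A\) and \(V\supset B\). Then \(Q\subset U\cup V\). The complement of \(U\cup V\) is compact and is covered by the complements of the clopen sets containing \(x\). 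Extracting a finite subcover and intersecting the corresponding clopen sets gives a clopen \(C\ni x\) with \(C\subset U\cup V\). Now \(C\cap U=C\setminus V\) is both open and closed and contains \(x\), so \(Q\subset C\cap U\). This forces \(B\subset Q\cap V=\varnothing\). Hence \(Q\) is connected, and it equals the component of \(x\).

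\emph{Nested intersections.} Let \(K_1\supset K_2\supset\cdots\) (or any downward-directed family) be compact connected sets, and put \(K=\bigcap K_n\). Then \(K\) is compact as a closed subset of \(K_1\), and it is nonempty by the finite intersection property. Suppose \(K=A\sqcup B\) with \(A,B\) nonempty and closed. Choose disjoint open sets \(U\supset A\) and \(V\supset B\) by normality. The sets \(K_n\setminus(U\cup V)\) are compact, nested, and have empty intersection, so some \(K_n\subset U\cup V\). This set meets both \(U\) and \(V\), because it contains \(K\), which contradicts the connectedness of \(K_n\).

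The only delicate step is the finite-subcover argument producing a single clopen \(C\subset U\cup V\). That step is where compactness is genuinely used, and it is standard.
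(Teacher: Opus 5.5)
Your proof is correct and takes the same approach as the paper. The paper gives no argument of its own and only cites Engelking (Theorems 6.1.23 and 6.1.18); your two arguments are the standard proofs of those results: normality plus a finite subcover to trap the quasicomponent in one clopen set inside \(U\cup V\), and compactness to force some \(K_n\subset U\cup V\). They tacitly use that the ambient space is Hausdorff (so each \(K_n\) is closed and \(K_1\) is normal), which is the setting of the lemma.
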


\begin{proof}
Both are standard; see for instance \cite[Theorem~6.1.23]{Engelking} and
\cite[Theorem~6.1.18]{Engelking}.
\end{proof}

\begin{corollary}[Right continuity of connected components]\label{cor:right-continuity}
Let \(x,y\in Y_\Lambda(K)\).  If \(x\) and \(y\) lie in the same connected
component of \(Y_{\Lambda+\eps}(K)\) for every \(\eps>0\), then they lie in
the same connected component of \(Y_\Lambda(K)\).  Equivalently, the map
\[
  \Conn\bigl(Y_\Lambda(K)\bigr)\longrightarrow
  \varprojlim_{\eps>0}\ \im\bigl[\Conn(Y_\Lambda(K))\to
  \Conn(Y_{\Lambda+\eps}(K))\bigr]
\]
is injective.
\end{corollary}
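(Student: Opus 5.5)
The plan is to combine the compactness of \cref{thm:knot-sublevel-compact} with \cref{lem:components-quasi}, and to exploit that the sublevels are nested compact sets with \(Y_\Lambda(K)=\bigcap_{\eps>0}Y_{\Lambda+\eps}(K)\). This identity holds because the condition \(\Len\le\Lambda+\eps\) for every \(\eps>0\) is equivalent to \(\Len\le\Lambda\). Each \(Y_{\Lambda+\eps}(K)\) carries the quotient topology from the constant-speed \(C^1\) topology. Since the larger sublevel is compact Hausdorff and \(Y_\Lambda(K)\) is compact, the inclusion \(Y_\Lambda(K)\hookrightarrow Y_{\Lambda+\eps}(K)\) is a closed embedding. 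So all the spaces may be regarded as closed subspaces of one compact Hausdorff space, for instance \(Y_{\Lambda+1}(K)\).

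For each \(\eps\in(0,1]\), let \(C_\eps\) be the connected component of \(Y_{\Lambda+\eps}(K)\) containing \(x\). By hypothesis \(y\in C_\eps\). Components of a compact Hausdorff space are closed, hence compact. If \(\eps'<\eps\), then \(C_{\eps'}\) is a connected subset of \(Y_{\Lambda+\eps}(K)\) containing \(x\), so \(C_{\eps'}\subset C_\eps\). The family \(\{C_\eps\}\) is therefore a nested family of compact connected sets, all containing \(x\) and \(y\). By the second half of \cref{lem:components-quasi}, applied in \(Y_{\Lambda+1}(K)\) after restricting to a sequence \(\eps_n\downarrow0\), the intersection \(C=\bigcap_n C_{\eps_n}\) is compact and connected. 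It contains \(x\) and \(y\), and it lies in \(\bigcap_n Y_{\Lambda+\eps_n}(K)=Y_\Lambda(K)\). Thus \(x\) and \(y\) lie in a connected subset of \(Y_\Lambda(K)\), hence in the same component.

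For the equivalent formulation, injectivity says the following: if two components of \(Y_\Lambda(K)\) have the same image in \(\Conn(Y_{\Lambda+\eps}(K))\) for every \(\eps\), then they are equal. Picking representatives \(x,y\) reduces this to the first statement.

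The only delicate point is the topological bookkeeping. We must check that the subspace topology on \(Y_\Lambda(K)\) inherited from \(Y_{\Lambda+\eps}(K)\) agrees with its own quotient topology, so that connectedness of \(C\) as a subset of \(Y_{\Lambda+1}(K)\) means connectedness in \(Y_\Lambda(K)\). The closed-embedding argument (a continuous injection from a compact space into a Hausdorff space) handles this. Note that quasicomponents are not actually needed here. The nested-intersection half of the lemma suffices, because the relevant sets are components of different spaces, all sitting compactly inside a common compact Hausdorff ambient space.
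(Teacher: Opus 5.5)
Your proof is correct and follows the paper's argument: the components of \(x\) in \(Y_{\Lambda+1/n}(K)\) are nested compact connected sets containing \(y\), and the nested-intersection half of \cref{lem:components-quasi}, together with \(\bigcap_n Y_{\Lambda+1/n}(K)=Y_\Lambda(K)\), finishes it. The paper leaves two points implicit that you make explicit: the inclusions are closed embeddings, so connectedness of the intersection holds intrinsically in \(Y_\Lambda(K)\), and the inverse-limit formulation reduces to the pointwise statement.
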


\begin{proof}
Let \(C_n\) be the component of \(Y_{\Lambda+1/n}(K)\) containing \(x\); by
hypothesis \(y\in C_n\).  Each \(C_n\) is closed in a compact space, hence
compact, and \(C_{n+1}\subset C_n\).  By \cref{lem:components-quasi} the
intersection \(C_\infty=\bigcap_n C_n\) is compact, connected and contains
\(\{x,y\}\).  Since \(\bigcap_n Y_{\Lambda+1/n}(K)=Y_\Lambda(K)\), we have
\(C_\infty\subset Y_\Lambda(K)\), so \(x\) and \(y\) lie in one component of
\(Y_\Lambda(K)\).
\end{proof}

\subsection{The merge function}

\begin{definition}[Merge level]\label{def:merge}
For \(x,y\in Y_\infty(K)\) put
\[
  \mu_K(x,y)=\inf\bigl\{\Lambda:\ x\ \text{and}\ y\ \text{lie in the same
  connected component of}\ Y_\Lambda(K)\bigr\}.
\]
More generally, for connected subsets \(A,B\subset Y_\infty(K)\) we write
\(\mu_K(A,B)\) for the infimum of the levels \(\Lambda\) at which
\(A\cup B\) is contained in one connected component of \(Y_\Lambda(K)\);
this agrees with the previous formula when \(A\) and \(B\) are points.
\end{definition}

\begin{lemma}[Smoothing one positive-thickness knot]
\label{lem:single-knot-smoothing}
Let \(\gamma:S^1\to\R^3\) be a \(C^{1,1}\) embedding of positive
thickness.  Then \(\gamma\) can be joined to a smooth embedding of the same
knot type by a continuous path of \(C^{1,1}\) embeddings along which length
is uniformly bounded and thickness is uniformly bounded away from zero.
\end{lemma}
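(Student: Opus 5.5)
We plan to use the simplest possible smoothing: mollify \(\gamma\) in arclength and take the straight-line homotopy in parameter space from \(\gamma\) to its mollification. Parametrise \(\gamma\) by constant speed on \(S^1=\R/2\pi\Z\), let \(\phi_\delta\) be a standard even mollifier supported in \([-\delta,\delta]\), and put \(\gamma_\delta=\phi_\delta*\gamma\). Then \(\gamma_\delta\) is \(C^\infty\). We also have \(\|\gamma_\delta-\gamma\|_{C^1}\to0\) as \(\delta\to0\), and \(\operatorname{Lip}(\gamma_\delta')\le\operatorname{Lip}(\gamma')\), because convolution does not increase Lipschitz constants. The path is \(\eta_t=(1-t)\gamma+t\gamma_{\delta}\) for \(t\in[0,1]\) and a fixed small \(\delta\). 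Equivalently one may use \(\eta_t=\gamma_{t\delta}\) with \(\gamma_0=\gamma\). Either way the path is continuous in \(C^{1,1}\) away from the endpoint at \(\gamma\), and continuous in \(C^1\) with uniformly bounded Lipschitz constant of the tangent at that endpoint. We take the first (linear) form, so that continuity in the \(C^{1,1}\) norm of \cref{def:sublevels} is clear: \(t\mapsto\eta_t\) is affine in a Banach space.

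The key estimate is uniform. Every \(\eta_t\) satisfies \(\|\eta_t-\gamma\|_{C^1}\le\|\gamma_\delta-\gamma\|_{C^1}=:\epsilon\), and \(\operatorname{Lip}(\eta_t')\le\operatorname{Lip}(\gamma')=:A\). Length bounds are then immediate, since \(\Len(\eta_t)\le\Len(\gamma)+2\pi\epsilon\). For thickness we rerun \cref{prop:c11-local-bound}, i.e.\ Steps 1--3 of \cref{lem:local-tubular-stability}, which only need \(C^1\) closeness plus a uniform Lipschitz bound on tangents.

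\emph{Curvature term.} \(|\eta_t'|\ge|\gamma'|-\epsilon\) is bounded below and \(\operatorname{Lip}(\eta_t')\le A\). So the unit tangent is Lipschitz with a uniform constant, and \(\inf\rho_{\eta_t}\) is bounded below uniformly in \(t\).

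\emph{Distant contact.} We argue by contradiction as in Step 3. If doubly critical chords of \(\eta_{t_n}\) had length tending to \(0\) (or below \(\Thi(\gamma)\)), then a subsequence with \(\delta\)-dependence absorbed gives \(C^1\) limits. Limiting endpoints that are distinct would give a short doubly critical chord of \(\gamma\), which contradicts \(\Thi(\gamma)>0\). Coalescing endpoints contradict the uniform Lipschitz tangent bound, because a chord perpendicular to both tangents forces arc length between the endpoints of order at least \(1/A\) times the speed.

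\emph{Conclusion.} Taking \(\delta\) so small that \(\epsilon\) lies below the threshold from the contradiction argument, \cref{thm:thickness-formula} gives \(\Thi(\eta_t)\ge r>0\) for all \(t\). Embeddedness and knot type follow: positive thickness forces embeddedness, and the whole path consists of \(C^1\)-close thick curves, so the isotopy statement of \cite[\S2]{CKS} shows \(\gamma_\delta\) has the knot type of \(\gamma\). Alternatively, the path itself is an isotopy through embeddings.

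The main obstacle is making the contradiction argument quantitative and uniform in \(t\) rather than merely for each fixed curve. The point to check carefully is that the coalescing-endpoints case is controlled by \(A\) and the speed bounds alone. I would handle it first by a direct Taylor estimate: for \(|s-u|\) small, \(\eta(s)-\eta(u)=(s-u)\eta'(u)+O(A|s-u|^2)\), so the chord is nearly parallel to \(\eta'(u)\) unless \(|s-u|\gtrsim |\eta'|/A\). This yields an explicit lower bound on \(\operatorname{dcsd}\) in the near-diagonal regime, and compactness handles the rest.
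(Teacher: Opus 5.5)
Your proposal is correct and takes essentially the same route as the paper: periodic mollification in constant-speed coordinates, the straight-line interpolation \(\eta_t=(1-t)\gamma+t\gamma_\delta\), and uniform \(C^1\)-closeness together with \(\operatorname{Lip}(\eta_t')\le\operatorname{Lip}(\gamma')\), which controls both the curvature term and the near-diagonal pairs. The only difference is cosmetic: for pairs far from the diagonal you use the Step~3 limiting-chord contradiction (letting \(\delta\to0\)), whereas the paper uses the direct observation that the positive separation of \(\gamma\) off a diagonal neighbourhood persists under small \(C^0\) perturbation.
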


\begin{proof}
Choose constant-speed coordinates and periodically mollify \(\gamma\).
The smooth curves \(\gamma_\eps\) converge to \(\gamma\) in \(C^1\), and
the Lipschitz constants of their tangent fields remain uniformly bounded.
For small \(\eps\), the straight interpolation between \(\gamma\) and
\(\gamma_\eps\) stays \(C^1\)-close to \(\gamma\) with the same type of
uniform tangent-Lipschitz bound.  This gives a uniform positive lower bound
for the local curvature-radius contribution to thickness.  For the
nonlocal contribution, remove a small neighbourhood of the diagonal in
\(S^1\times S^1\); on the remaining compact set, positive reach gives a
positive separation that persists under sufficiently small \(C^0\)
perturbation.  The excluded near-diagonal pairs are controlled by the local
graph estimate and the tangent-Lipschitz bound.  Thus the whole interpolation
has thickness bounded below by a positive constant, and length is bounded by
\(C^1\)-closeness.  The path therefore remains embedded and in the same knot
type.  This is the standard smoothing mechanism behind the compactness
theory of thick curves; compare \cite{CKS,DDS}.
\end{proof}

\begin{lemma}[Finiteness]\label{lem:finiteness-merge}
\(\mu_K(x,y)<\infty\) for all \(x,y\in Y_\infty(K)\).
\end{lemma}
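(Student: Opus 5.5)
The plan is to build an explicit continuous path in \(Y_\infty(K)\) from \(x\) to \(y\) whose image is compact. A path has compact, hence bounded, length image and lies in some \(Y_\Lambda(K)\). Its image is connected, so \(x\) and \(y\) lie in one connected component of that \(Y_\Lambda(K)\), and \(\mu_K(x,y)\le\Lambda<\infty\).

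The path is assembled from three pieces. Choose representatives \(\gamma_x,\gamma_y\): constant-speed \(C^{1,1}\) embeddings of type \(K\) with \(\Thi\ge1\).

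First, by \cref{lem:single-knot-smoothing}, join \(\gamma_x\) to a smooth embedding \(\eta_x\) by a path of \(C^{1,1}\) embeddings with length \(\le L_x\) and thickness \(\ge r_x>0\). Do the same for \(\gamma_y\), giving \(\eta_y\).

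Second, \(\eta_x\) and \(\eta_y\) lie in the same component \(\calE_K(\R^3)\). Choose a smooth isotopy \(F:[0,1]\to\calE_K(\R^3)\) between them. By \cref{thm:compact-family-bound}, its ropelength is bounded along the path, so lengths are \(\le L'\) and thicknesses are \(\ge r'>0\).

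Concatenating the three pieces gives a path \(\beta:[0,1]\to\wh\calE_K(\R^3)\) with \(\Thi(\beta(t))\ge r>0\) and \(\Len(\beta(t))\le L\) for uniform constants \(r,L\).

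Third, normalise. Send \(\beta(t)\) to \(\tilde\beta(t)=\Thi(\beta(t))^{-1}\beta(t)\), reparametrised at constant speed and taken modulo rigid motions and source symmetries. This has thickness exactly \(1\) and length \(\Rop(\beta(t))\le L/r\), and it is of type \(K\). At the endpoints the rescaling factor is \(\Thi(\gamma_x)^{-1}\le1\), which does not recover \(x\) itself. This is repaired by prefixing the segment \(s\mapsto\bigl((1-s)+s\,\Thi(\gamma_x)^{-1}\bigr)\gamma_x\). Because \(\Thi(\gamma_x)\ge1\), every intermediate curve has thickness \(\ge1\) and length \(\le\Len(\gamma_x)\). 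The same segment is appended at the \(y\) end.

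\textbf{Main obstacle: continuity of the rescaled path.} The delicate point is that \(t\mapsto\Thi(\beta(t))\) must be continuous along \(\beta\), and constant-speed reparametrisation must be continuous in \(C^1\).

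\emph{Reparametrisation.} Speeds are bounded away from zero, so constant-speed reparametrisation is continuous in \(C^1\).

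\emph{Thickness, first attempt.} Thickness is upper semicontinuous under \(C^0\) limits by the three-point formula \eqref{eq:three-point}. Lower semicontinuity under \(C^1\) convergence with uniformly Lipschitz tangents should follow from the argument of Step 3 of \cref{lem:local-tubular-stability} together with \cref{prop:c11-local-bound}. I would try to establish continuity this way first.

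\emph{Fallback.} If continuity of thickness is problematic, avoid dividing by \(\Thi(\beta(t))\). Instead rescale by the constant factor \(1/r\), which gives thickness \(\ge1\) and length \(\le L/r\) uniformly, and connect the endpoints by the monotone scaling segments described above. Everything then stays in \(Y_{L/r}(K)\). This fallback seems cleaner, and I would adopt it.
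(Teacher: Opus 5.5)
Your proposal is correct and follows essentially the paper's route. You smooth each endpoint with \cref{lem:single-knot-smoothing}, join the two smooth representatives by an isotopy whose thickness and length are controlled by \cref{thm:compact-family-bound}, and then apply one uniform dilation with scaling segments at the ends so that the whole path lies in a single finite \(Y_\Lambda(K)\). Your fallback of rescaling by the constant \(1/r\) is the right choice and matches the paper's dilation by \(\max\{1,\delta^{-1}\}\); replacing \(r\) by \(\min\{r,1\}\) makes the endpoint segments pure enlargements. By contrast, dividing by \(\Thi(\beta(t))\) would need a continuity of thickness that is not available in general in the \(C^1\) topology.
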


\begin{proof}
By \cref{lem:single-knot-smoothing}, each endpoint can first be connected,
after a finite dilation if necessary, to a smooth representative through a
path with thickness at least one and finite length bound.  The two smooth
representatives have the same knot type, hence are joined by a smooth
isotopy.  The parameter interval is compact, so
\cref{thm:compact-family-bound} gives a positive lower bound \(\delta\) for
thickness and a finite upper bound \(L\) for length along this isotopy.
Dilating the isotopy by \(\max\{1,\delta^{-1}\}\) puts it inside one finite
normalised sublevel.  Dilation paths at the ends and the two smoothing paths
then concatenate to a path from \(x\) to \(y\) in \(Y_{\Lambda'}(K)\) for
some finite \(\Lambda'\).  Hence both the connected and path merge levels
are finite.
\end{proof}

\begin{theorem}[The merge function]\label{thm:merge-ultrametric}
For all \(x,y,z\in Y_\infty(K)\):
\begin{enumerate}[label=\textup{(\roman*)},leftmargin=2.6em]
\item \(\mu_K(x,x)=\Len(x)\) and \(\mu_K(x,y)=\mu_K(y,x)<\infty\);
\item \(\mu_K(x,z)\le\max\{\mu_K(x,y),\mu_K(y,z)\}\);
\item \emph{(attainment)} \(x\) and \(y\) lie in one connected
component of \(Y_{\Lambda}(K)\) already for
\(\Lambda=\mu_K(x,y)\).
\end{enumerate}
Consequently, on the ideal stratum, where \(\Len\equiv m_K\), the function
\[
  d_K(A,B)=\mu_K(A,B)-m_K
\]
is a genuine finite-valued ultrametric on the set \(\Conn(I(K))\) of
connected components of \(I(K)\); in particular \(d_K(A,B)=0\) if and only
if \(A=B\).
\end{theorem}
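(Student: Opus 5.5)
I will prove the three items in the order (iii), (i), (ii), since attainment is the only non-formal ingredient and the others follow quickly from it together with \cref{lem:finiteness-merge}.

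\emph{Attainment.} Fix \(x,y\) and put \(\Lambda_0=\mu_K(x,y)\), finite by \cref{lem:finiteness-merge}. The set of admissible levels is upward closed, because \(Y_\Lambda(K)\subset Y_{\Lambda'}(K)\) for \(\Lambda\le\Lambda'\) and a connected set stays inside one component of the larger space. So for every \(\eps>0\) the points \(x,y\) lie in one component of \(Y_{\Lambda_0+\eps}(K)\). Both points belong to \(Y_{\Lambda_0}(K)\), since \(\Len(x),\Len(y)\le\Lambda_0\): a component of \(Y_\Lambda\) containing \(x\) forces \(\Len(x)\le\Lambda\). Hence \cref{cor:right-continuity} applies directly and gives (iii). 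This is the step where compactness (\cref{thm:knot-sublevel-compact}) and \cref{lem:components-quasi} enter; I expect it to be the main point, though once right continuity is in hand it is short. For subsets \(A,B\), I would run the same argument with the component \(C_n\) of \(Y_{\Lambda_0+1/n}\) containing the connected set \(A\cup B\). The one extra case is \(A\cup B\) disconnected, say two distinct components of \(I(K)\). There I would take \(C_n\supset A\cup B\) and intersect as in the corollary's proof: the nested intersection is connected and contains \(A\cup B\).

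\emph{Item (i).} The point \(x\) lies in \(Y_\Lambda\) exactly when \(\Lambda\ge\Len(x)\), and then \(\{x\}\) is connected. So \(\mu_K(x,x)=\inf\{\Lambda\ge\Len(x)\}=\Len(x)\); the paper's convention \(\Lambda\ge m_K\) is harmless because \(\Len(x)\ge m_K\). Symmetry is immediate from the definition, and finiteness is \cref{lem:finiteness-merge}.

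\emph{Item (ii).} Let \(\Lambda=\max\{\mu_K(x,y),\mu_K(y,z)\}\). By (iii) and upward closure, \(x,y\) share a component of \(Y_\Lambda\), and so do \(y,z\). Components partition the space, so \(x,z\) share one, and therefore \(\mu_K(x,z)\le\Lambda\). The same argument works verbatim for connected subsets.

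\emph{Ultrametric on \(\Conn(I(K))\).} For components \(A,B\) of \(I(K)\), the quantity \(d_K\) is finite, since any \(a\in A\) and \(b\in B\) merge at a finite level and \(A,B\) are connected. It is symmetric and satisfies the strong triangle inequality by the subset version of (ii). Nonnegativity holds because every level containing \(A\) is at least \(m_K\).

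\emph{Separation.} If \(A=B\), the set \(A\) is connected inside \(I(K)=Y_{m_K}\), so \(d_K=0\). Conversely, suppose \(d_K(A,B)=0\). By the subset form of attainment, \(A\cup B\) lies in one connected component of \(Y_{m_K}(K)=I(K)\). Since \(A\) and \(B\) are themselves components of \(I(K)\), this forces \(A=B\). Note that right continuity is genuinely needed here: at every level strictly above \(m_K\), distinct components might be joined.
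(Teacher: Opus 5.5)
Your proof is correct and follows essentially the paper's route: attainment is \cref{cor:right-continuity} applied at \(\Lambda=\mu_K(x,y)\), and separation on \(\Conn(I(K))\) comes from applying attainment at level \(m_K\). The differences are cosmetic: you derive (ii) from (iii), whereas the paper treats (ii) as formal (an \(\eps\)-argument at level \(\max\{\mu_K(x,y),\mu_K(y,z)\}+\eps\) suffices), and you make explicit the subset versions of attainment and of the strong triangle inequality, which the paper uses tacitly.
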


\begin{proof}
(i) and (ii) are formal from monotonicity of the filtration, together with
\cref{lem:finiteness-merge}.  (iii) is \cref{cor:right-continuity} applied
at \(\Lambda=\mu_K(x,y)\), noting that \(x\) and \(y\) lie in one component
of \(Y_{\mu_K(x,y)+\eps}\) for every \(\eps>0\) by definition of the
infimum.  For the last statement, if \(A\ne B\) are distinct components of
\(I(K)=Y_{m_K}(K)\) then by (iii) \(\mu_K(A,B)>m_K\), so \(d_K(A,B)>0\);
symmetry and the ultrametric inequality are (i) and (ii).
\end{proof}

\begin{corollary}[Positive attained barrier]\label{cor:positive-barrier}
Let \(A\ne B\) be connected components of \(I(K)\).  Then every path in
\(Y_\infty(K)\) from a point of \(A\) to a point of \(B\) attains length at
least \(m_K+d_K(A,B)>m_K\) somewhere; and at the level
\(m_K+d_K(A,B)\) itself, \(A\) and \(B\) already lie in one connected
component of the sublevel.
\end{corollary}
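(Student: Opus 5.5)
The plan is to derive both halves of the corollary directly from \cref{thm:merge-ultrametric}, using only the definition of \(\mu_K\) on connected subsets and the fact that the image of a path is connected.

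\emph{First claim.} Fix points \(a\in A\), \(b\in B\) and a path \(\omega:[0,1]\to Y_\infty(K)\) from \(a\) to \(b\). Set \(\Lambda_\omega=\sup_t\Len(\omega(t))\). Length is continuous in the constant-speed \(C^1\) topology, and \([0,1]\) is compact, so this supremum is a maximum, attained at some \(t_0\). It suffices to show \(\Lambda_\omega\ge m_K+d_K(A,B)=\mu_K(A,B)\).

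The image \(\omega([0,1])\) is connected and lies in \(Y_{\Lambda_\omega}(K)\). Hence \(a\) and \(b\) lie in one connected component \(C\) of \(Y_{\Lambda_\omega}(K)\). Since \(A\) and \(B\) are connected subsets of \(I(K)=Y_{m_K}(K)\subset Y_{\Lambda_\omega}(K)\), each lies in a single component of the larger sublevel. \(A\) contains \(a\) and \(B\) contains \(b\), so both are contained in \(C\), and therefore \(A\cup B\subset C\). By \cref{def:merge} this gives \(\mu_K(A,B)\le\Lambda_\omega\).

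Strict positivity, \(m_K+d_K(A,B)>m_K\), is the last assertion of \cref{thm:merge-ultrametric}: \(A\ne B\) implies \(d_K(A,B)>0\). So the path attains length \(\Lambda_\omega\ge m_K+d_K(A,B)>m_K\) at \(t_0\).

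\emph{Second claim.} This is attainment for subsets, and it is the only step needing care, since \cref{thm:merge-ultrametric}(iii) is stated for points. I would pick \(a\in A\), \(b\in B\) and first show \(\mu_K(a,b)=\mu_K(A,B)\).
\begin{itemize}
\item \(\mu_K(a,b)\le\mu_K(A,B)\): at any level where \(A\cup B\) lies in one component, so do \(a\) and \(b\).
\item \(\mu_K(a,b)\ge\mu_K(A,B)\): at any level \(\Lambda\ge m_K\) where \(a,b\) share a component, the connected sets \(A\ni a\) and \(B\ni b\) lie in that same component, by the argument used in the first claim.
\end{itemize}
Applying \cref{thm:merge-ultrametric}(iii) to \(a,b\) then puts \(a,b\) in one component of \(Y_{\mu_K(A,B)}(K)\). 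The connectedness argument again enlarges this to \(A\cup B\). The level is exactly \(m_K+d_K(A,B)\).

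The main subtlety is remembering that the second claim concerns connected components only, not paths, consistent with \cref{rem:path-vs-conn}; no path-attainment statement is needed or claimed.
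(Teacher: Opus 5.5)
Your proposal is correct and follows the paper's proof: the lower bound comes from the connected image of the path lying in the sublevel at its peak length, and the attained level comes from \cref{thm:merge-ultrametric}(iii). The only difference is that you spell out the reduction \(\mu_K(a,b)=\mu_K(A,B)\) for \(a\in A\), \(b\in B\), using connectedness of \(A\) and \(B\), which the paper leaves implicit when it applies the pointwise statement (iii) to the components \(A,B\).
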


\begin{proof}
A path from \(A\) to \(B\) whose length stayed below \(\lambda\) would
place \(A\) and \(B\) in one connected component of \(Y_\lambda(K)\), so
\(\lambda\ge\mu_K(A,B)\).  The second assertion is
\cref{thm:merge-ultrametric}(iii).
\end{proof}

\begin{remark}[Connected merge versus path merge]\label{rem:path-vs-conn}
The merge function \(\mu_K\) of \cref{def:merge} uses \emph{connected}
components.  This choice is what makes the attainment argument work: a
nested intersection of compact connected sets is connected, but need not be
path connected.  The physically relevant companion quantity is
\[
  \mu^{\mathrm{path}}_K(x,y)
  =\inf\{\Lambda:\text{there is a path from }x\text{ to }y
                    \text{ in }Y_\Lambda(K)\}.
\]
For a subset \(A\), define \(\mu_K^{\mathrm{path}}(x,A)\) by allowing the
second endpoint to be any point of \(A\).  Then
\[
  \mu_K(x,y)\le \mu_K^{\mathrm{path}}(x,y)<\infty.
\]
Finiteness follows from \cref{lem:finiteness-merge}, and the strong triangle
inequality follows by concatenating paths.  Thus, on the path components of the ideal stratum, path merge gives a
pseudo-ultrametric after subtracting the minimum level, but it need not
separate distinct path components: two path components could in principle
merge only in the limit from the right.  We do not know in general whether
\(\mu_K=\mu_K^{\mathrm{path}}\), or whether the path infimum is attained.
Both issues disappear if the relevant sublevels are locally path connected.

By \cref{prop:fixed-length-retract}, this distinction has a concrete
meaning.  If \(x,y\in P_L(K)\), then they are physically isotopic using the
same rope exactly when they are path connected in \(Y_L(K)\).  Mere
connectedness of \(Y_L(K)\) is therefore an analytic lower-bound device,
not by itself a physical deformation.
\end{remark}

\begin{table}[htbp]
\small
\begin{tabular}{@{}p{3.15cm}p{4.05cm}p{4.8cm}@{}}
\toprule
 & \textbf{connected merge \(\mu_K\)} &
   \textbf{path merge \(\mu_K^{\rm path}\)}\\
\midrule
Meaning & topology of compact sublevels & deformation paths / min--max budget\\
Finite? & yes & yes\\
Strong triangle inequality? & yes & yes\\
Infimum attained? & yes & not known in general\\
Separates components at zero excess? & yes, for connected ideal components &
not known for path components\\
Relation & \multicolumn{2}{@{}l@{}}{Always \(\mu_K\le\mu_K^{\rm path}\); equality is open in general.}\\
\bottomrule
\end{tabular}
\caption{Two degree-zero merge notions.  Keeping them separate is essential:
the left column has the better compactness theorem, while the right column
records the actual path min--max problem.}
\label{tab:two-merges}
\end{table}

\begin{figure}[t]
\centering
\begin{tikzpicture}[x=1cm,y=1cm]
% dendrogram
\draw[->,>=Stealth,line width=0.8pt] (-0.6,-0.15) -- (-0.6,3.1);
\node[rotate=90,font=\scriptsize] at (-1.7,1.5) {level $\Lambda$};
\foreach \y/\lab in {0.35/$m_K$,1.35/$\mu_{AB}$,2.55/$\mu_{AC}$}{
  \draw[ropegrey,dashed,line width=0.5pt] (-0.6,\y) -- (5.6,\y);
  \node[font=\scriptsize,left] at (-0.65,\y) {\lab};}
% leaves
\foreach \x/\lab in {0.4/$A$,1.6/$B$,3.2/$C$,4.9/$D$}{
  \draw[ropeblue,line width=1.1pt] (\x,0.35) -- (\x,1.35);
  \filldraw[ropered] (\x,0.35) circle (1.7pt);
  \node[font=\scriptsize,below] at (\x,0.28) {\lab};}
% merges
\draw[ropeblue,line width=1.1pt] (0.4,1.35) -- (1.6,1.35);
\draw[ropeblue,line width=1.1pt] (1.0,1.35) -- (1.0,2.55);
\draw[ropeblue,line width=1.1pt] (3.2,1.35) -- (3.2,1.95) -- (4.9,1.95) -- (4.9,1.35);
\draw[ropeblue,line width=1.1pt] (4.05,1.95) -- (4.05,2.55);
\draw[ropeblue,line width=1.1pt] (1.0,2.55) -- (4.05,2.55);
\draw[ropeblue,line width=1.1pt] (2.5,2.55) -- (2.5,3.0);
\node[font=\scriptsize,above] at (2.5,3.0) {all merged};
\end{tikzpicture}
\hspace{0.4cm}
\begin{tikzpicture}[x=1cm,y=1cm]
\node[font=\scriptsize] at (1.2,2.5) {ultrametric triangle};
\filldraw[ropered] (0,0) circle (1.6pt) node[below left,font=\scriptsize] {$A$};
\filldraw[ropered] (2.4,0) circle (1.6pt) node[below right,font=\scriptsize] {$B$};
\filldraw[ropered] (1.2,1.85) circle (1.6pt) node[above,font=\scriptsize] {$C$};
\draw[ropeblue,line width=1pt] (0,0) -- (2.4,0) node[midway,below,font=\scriptsize] {$d(A,B)$};
\draw[ropeblue,line width=1pt] (0,0) -- (1.2,1.85) node[midway,above left,font=\scriptsize] {$d(A,C)$};
\draw[ropeblue,line width=1pt] (2.4,0) -- (1.2,1.85) node[midway,above right,font=\scriptsize] {$d(B,C)$};
\node[font=\scriptsize,align=center] at (1.2,-0.75)
 {two of the three sides\\ are equal and largest};
\end{tikzpicture}
\caption{\Cref{thm:merge-ultrametric}.  Left: the components of the ideal
stratum, drawn as leaves, merge as the budget rises; compactness makes each
merge \emph{happen at} its level rather than only above it, so the diagram
is a true dendrogram, with \(\mu_{AB}=\mu_K(A,B)\) and
\(\mu_{AC}=\mu_K(A,C)\).  Right: the resulting distance is an ultrametric,
so every triangle is isosceles with the two equal sides longest.}
\label{fig:dendrogram}
\end{figure}

\begin{remark}[What compactness buys]
Without \cref{thm:knot-sublevel-compact} one could still define \(\mu_K\)
and prove (i) and (ii); what would be missing is (iii).  And (iii) is the
whole point for \emph{connected} merge: it converts ``the configurations
belong to one connected component at every slightly larger level'' into the
same connectedness at the limiting level.  It makes \(d_K\) a metric on
connected ideal components.  It does not manufacture a path; that is the
separate issue of \cref{rem:path-vs-conn}.
\end{remark}

\begin{question}[Finiteness of connected components]\label{q:finite-components}
Is \(\Conn(Y_\Lambda(K))\) finite for every knot type \(K\) and every finite
\(\Lambda\)?  Compactness alone does not give this --- a compact space can
have a Cantor set of components.  The analogous finiteness question for
\(\pi_0(Y_\Lambda(K))\), the path components, is at least as subtle and is
closer to physical isotopy.  A positive answer to both would say that a
rope of bounded length admits only finitely many essentially different
thick placements in either sense.
\end{question}

\subsection{Connected and path untying thresholds}
\label{subsec:gordian}

Fix a connected component \(A_0\subset I(K)\) that is to serve as a tight
target; for the unknot, \(A_0=I(U)\) is the single round unit circle by
\cref{prop:unknot-2pi}.  Let \(x\in Y_\infty(K)\) and put
\(L=\Len(x)\).

\begin{definition}[Two untying thresholds]\label{def:untying}
Define
\[
 u_{\mathrm c}(x)=\mu_K(x,A_0),\qquad
 u_{\mathrm p}(x)=\mu_K^{\mathrm{path}}(x,A_0),
\]
and the corresponding excesses
\[
 e_{\mathrm c}(x)=u_{\mathrm c}(x)-L,\qquad
 e_{\mathrm p}(x)=u_{\mathrm p}(x)-L.
\]
We call \(u_{\mathrm c}\) the \emph{connected untying threshold} and
\(u_{\mathrm p}\) the \emph{path-budget untying threshold}.
We say that \(x\) is \emph{Gordian relative to \(A_0\) at its own length}
if there is no path from \(x\) to \(A_0\) in \(Y_L(K)\).
\end{definition}

The Gordian definition at the starting level has an exact fixed-length
physical interpretation.  Indeed, a path from \(x\) to \(A_0\) in
\(Y_L(K)\) can be pushed by \cref{prop:fixed-length-retract} to an
exact-length path from \(x\) to the dilation of \(A_0\) to length \(L\);
conversely the dilation path back to \(A_0\) remains in \(Y_L(K)\).  For
levels larger than \(L\), however, \(u_{\mathrm p}\) should be read as a
min--max \emph{budget} for sublevel paths, not literally as the length of
the original physical rope.

\begin{theorem}[Connected lower bound and path-budget threshold]
\label{thm:untying-dichotomy}
With the notation above:
\begin{enumerate}[label=\textup{(\roman*)},leftmargin=2.7em]
\item
\[
  L\le u_{\mathrm c}(x)\le u_{\mathrm p}(x)<\infty,
  \qquad 0\le e_{\mathrm c}(x)\le e_{\mathrm p}(x)<\infty.
\]
\item The connected threshold is attained: \(x\) and \(A_0\) lie in one
connected component of \(Y_{u_{\mathrm c}(x)}(K)\).
\item Every path from \(x\) to \(A_0\) has peak length at least
\(u_{\mathrm p}(x)\), while for every \(\eps>0\) there is such a path in
\(Y_{u_{\mathrm p}(x)+\eps}(K)\).
\item If \(e_{\mathrm c}(x)>0\), then \(x\) is Gordian at length \(L\),
and every sublevel path from \(x\) to \(A_0\) must reach length at least
\(L+e_{\mathrm c}(x)\).  More sharply, \(e_{\mathrm p}(x)>0\) certifies a
positive path-budget gap: every such path has peak length at least
\(L+e_{\mathrm p}(x)\).
\item If \(e_{\mathrm p}(x)=0\), the threshold alone does \emph{not} decide
whether \(x\) is Gordian: a path at level \(L\) may exist, or the infimum
may fail to be attained.  If the relevant sublevels are locally path
connected, then \(u_{\mathrm c}=u_{\mathrm p}\), the path threshold is
attained, and the old sharp dichotomy ``Gordian iff the excess is positive''
does hold.
\end{enumerate}
\end{theorem}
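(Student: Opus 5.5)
The proof is mostly bookkeeping on top of \cref{thm:merge-ultrametric}, \cref{cor:right-continuity}, \cref{lem:finiteness-merge} and \cref{prop:fixed-length-retract}. I treat each item in turn, using \(\mu_K(x,A_0)\) and \(\mu_K^{\rm path}(x,A_0)\) exactly as in \cref{def:merge,rem:path-vs-conn}.

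For (i), I note that \(x\notin Y_\Lambda(K)\) for \(\Lambda<L\), so both infima are at least \(L\). A path from \(x\) to a point of \(A_0\) inside \(Y_\Lambda(K)\) has connected image, and the component containing it also contains the connected set \(A_0\). Hence every admissible path level is an admissible connected level, which gives \(u_{\mathrm c}\le u_{\mathrm p}\). Finiteness of \(u_{\mathrm p}\) is \cref{lem:finiteness-merge} applied to \(x\) and any point of \(A_0\). The excess inequalities follow by subtracting \(L\).

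For (ii), I apply \cref{cor:right-continuity} at \(\Lambda=u_{\mathrm c}(x)\), with \(y\) an arbitrary point of \(A_0\). For every \(\eps>0\), the set \(\{x\}\cup A_0\) lies in one component of \(Y_{u_{\mathrm c}+\eps}(K)\), by monotonicity of components in \(\Lambda\) and the definition of the infimum. Hence \(x\) and \(y\) share a component of \(Y_{u_{\mathrm c}}(K)\). The connected set \(A_0\subset I(K)\subset Y_{u_{\mathrm c}}(K)\) then lies in that same component. This is the only step that uses compactness, through \cref{thm:knot-sublevel-compact}, and it is exactly \cref{thm:merge-ultrametric}(iii) with a set in place of a point.

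Item (iii) is the definition of an infimum. A path with peak \(\lambda\) lies in \(Y_\lambda(K)\), so \(\lambda\ge u_{\mathrm p}\). Conversely, for each \(\eps>0\) some admissible \(\Lambda<u_{\mathrm p}+\eps\) exists, and the filtration is monotone.

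For (iv), first suppose \(e_{\mathrm c}(x)>0\). A path from \(x\) to \(A_0\) in \(Y_L(K)\) would place \(x\) and \(A_0\) in one connected component at level \(L\), forcing \(u_{\mathrm c}\le L\). So \(x\) is Gordian at length \(L\), and the peak bound follows from \(u_{\mathrm c}\le u_{\mathrm p}\) together with (iii). The sharper path-budget statement is (iii) restated.

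For (v), I only need to explain why no conclusion follows. When \(e_{\mathrm p}=0\), the infimum in (iii) may or may not be attained at \(L\), and nothing proved so far distinguishes these cases. I will not construct an example; the statement asserts only non-decidability by the threshold.

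The positive part of (v) is the step needing care, and I expect it to be the main obstacle. Assume the sublevels \(Y_\Lambda(K)\) are locally path connected. Then connected components equal path components, so the component from (ii) contains a path from \(x\) to a point of \(A_0\) inside \(Y_{u_{\mathrm c}}(K)\). Therefore \(u_{\mathrm p}\le u_{\mathrm c}\), hence equality, and the path threshold is attained. In particular, \(x\) is non-Gordian if and only if \(e_{\mathrm c}=e_{\mathrm p}=0\); the forward direction uses (iv) and the converse uses attainment at level \(L\). The main check is that "relevant sublevels" includes the level \(u_{\mathrm c}(x)\) itself. Local path connectedness only at levels above \(u_{\mathrm c}\) would not suffice, so the hypothesis must be read as applying at that level.
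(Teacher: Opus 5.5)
Your proposal is correct and follows the paper's proof essentially step for step. The steps match as follows: the lower bound comes from \(x\notin Y_\Lambda(K)\) for \(\Lambda<L\); \(u_{\mathrm c}\le u_{\mathrm p}\) holds because a path image is connected; finiteness comes from the finiteness lemma; attainment comes from right continuity of connected components (and you usefully spell out why the whole connected set \(A_0\), not just one of its points, joins that component); items (iii)--(iv) come from the definitions as infima; and (v) comes from components coinciding with path components under local path connectedness.

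One small correction to your closing remark: local path connectedness at levels strictly above \(u_{\mathrm c}(x)\) already yields \(u_{\mathrm c}=u_{\mathrm p}\). It is only attainment of the path threshold, and hence the sharp dichotomy at level \(L\), that needs the hypothesis at the level \(u_{\mathrm c}(x)\) itself.
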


\begin{proof}
The lower bound \(L\le u_{\mathrm c}\) is forced because \(x\notin
Y_\Lambda(K)\) for \(\Lambda<L\).  The inequality
\(u_{\mathrm c}\le u_{\mathrm p}\) is
\cref{rem:path-vs-conn}, and finiteness is
\cref{lem:finiteness-merge}.  Attainment of \(u_{\mathrm c}\) is
\cref{thm:merge-ultrametric}(iii).  The assertions about
\(u_{\mathrm p}\) are its definition as an infimum of path-admissible
levels.  If \(u_{\mathrm c}>L\), then no path can exist in \(Y_L(K)\), since
a path would place \(x\) and \(A_0\) in one connected component there.
The final statement follows from equality of connected and path components
under local path connectedness together with connected attainment.
\end{proof}

\begin{figure}[t]
\centering
\begin{tikzpicture}[x=1cm,y=1cm]
\draw[->,>=Stealth,line width=0.8pt] (0,0) -- (7.4,0)
  node[right,font=\scriptsize] {deformation parameter};
\draw[->,>=Stealth,line width=0.8pt] (0,0) -- (0,3.35)
  node[above,font=\scriptsize] {length};
\draw[ropeblue,line width=1.2pt]
  (0.35,1.35) .. controls (1.3,1.35) and (1.8,2.85) .. (2.8,2.85)
             .. controls (3.8,2.85) and (3.5,1.25) .. (4.7,1.25)
             .. controls (5.7,1.25) and (5.7,0.75) .. (6.8,0.75);
\filldraw[ropered] (0.35,1.35) circle (1.8pt);
\node[ropered,font=\scriptsize,left] at (0.3,1.35) {$x$};
\filldraw[ropegreen] (6.8,0.75) circle (1.8pt);
\node[ropegreen,font=\scriptsize,right] at (6.85,0.75) {$A_0$};
\draw[ropegrey,dashed,line width=0.6pt] (0,1.35) -- (7.15,1.35);
\node[font=\scriptsize,left] at (-0.05,1.35) {$L$};
\draw[ropegreen,dashed,line width=0.75pt] (0,2.0) -- (7.15,2.0);
\node[ropegreen,font=\scriptsize,left] at (-0.05,2.0) {$u_{\rm c}$};
\draw[ropered,dashed,line width=0.75pt] (0,2.55) -- (7.15,2.55);
\node[ropered,font=\scriptsize,left] at (-0.05,2.55) {$u_{\rm p}$};
\draw[<->,>=Stealth,ropered,line width=0.75pt] (7.0,1.35) -- (7.0,2.55);
\node[ropered,font=\scriptsize,right] at (7.05,1.95) {$e_{\rm p}$};
\end{tikzpicture}
\caption{Two thresholds, not one.  The connected threshold \(u_{\rm c}\)
is attained and is a rigorous lower bound.  The path threshold
\(u_{\rm p}\) is the infimum of peak lengths of actual sublevel
deformation paths, but its infimum is not known to be attained.  Only at
the starting level \(L\) does path connectivity directly coincide with
fixed-length physical isotopy.  The drawn profile is only illustrative.}
\label{fig:untying}
\end{figure}

\begin{remark}[The state of the art, model by model]\label{rem:state-of-art}
The theorem above is deliberately phrased so that different physical models
are not mixed.
\begin{itemize}[leftmargin=1.4em]
\item In fixed-length thick \emph{link} isotopy, Coward--Hass
\cite{CowardHass} prove a Gordian split link.  Kusner--Kusner
\cite{KusnerKusner} give a Gordian pair under a total-length constraint, and
Ayala--Hass \cite{AyalaHass} construct infinite families of Gordian unlinks.
Bauermeister \cite{Bauermeister} proves further split-link obstructions in
the Gehring ropelength setting, where length trading is allowed.
\item For a single \emph{unknot}, Ayala \cite{AyalaGeometric} proves Gordian
unknots in a thinner model \(\mathcal U_1\) of parametrised thickness.  In the
same notation the standard thick model is \(\mathcal U_2\).  Thus this
recent theorem is close to, but not identical with, the standard
\(Y_L(U)\) problem studied here.  Ayala singles out the saturated case
\(\tau=2\) as a qualitatively different regime, in which the geometric
slack used in the thin constructions is no longer available, and suggests
that Morse-theoretic or critical-point methods may be required there.  The
sublevel formulation of the present paper is one attempt to supply a
framework of that kind.
\item In the standard unit-thickness model of this paper, a rigorous example
with \(x\) path-separated from the round target inside \(Y_{\Len(x)}(U)\)
would settle \cref{prob:gordian-knot}.  The stronger inequality
\(u_{\mathrm c}(x)>\Len(x)\) would additionally provide an attained
connected lower-bound certificate.
\end{itemize}
The moral is that ``Gordian'' should always be accompanied by the precise
constraints on tube radius, curvature, component lengths, and allowed
length trading.
\end{remark}

\begin{figure}[t]
\centering
\begin{tikzpicture}[scale=0.95]
% back half of the ring
\draw[line width=6pt,ropesand] (0,0) ++(200:1.05) arc (200:-20:1.05);
\draw[line width=1pt,ropeblue] (0,0) ++(200:1.05) arc (200:-20:1.05);
% threaded strand, drawn on top of the back half
\draw[line width=6pt,ropesand]
  (-2.6,0.02) .. controls (-1.3,0.5) and (-0.3,0.6) .. (0.0,0.0)
              .. controls (0.3,-0.6) and (1.3,-0.5) .. (2.6,-0.02);
\draw[line width=1pt,ropered]
  (-2.6,0.02) .. controls (-1.3,0.5) and (-0.3,0.6) .. (0.0,0.0)
              .. controls (0.3,-0.6) and (1.3,-0.5) .. (2.6,-0.02);
% front half of the ring, drawn on top of the strand
\draw[line width=6pt,ropesand] (0,0) ++(-20:1.05) arc (-20:-160:1.05);
\draw[line width=1pt,ropeblue] (0,0) ++(-20:1.05) arc (-20:-160:1.05);
\node[font=\scriptsize,align=center] at (0,-1.85)
  {a thin loop threaded through a tight ring};
\draw[->,>=Stealth,ropegrey,line width=0.9pt] (2.9,0.0) -- (3.7,0.0);
\node[font=\scriptsize,align=left,anchor=west] at (4.0,0)
  {to withdraw, the strand must first\\ pass through the ring, and the ring\\
   is too tight unless extra rope is spent};
\end{tikzpicture}
\caption{The mechanism behind Gordian phenomena, schematically.  A short
component cannot be withdrawn without opening the ring, and opening the ring
costs length.  Making this into a proof is exactly the content of
\cite{CowardHass}; the picture is a caricature, not their example.}
\label{fig:gordian-mechanism}
\end{figure}

\subsection{Mirror symmetry}

Ropelength is invariant under all isometries of \(\R^3\), including
orientation-reversing ones, and this gives the filtration an extra symmetry
that is invisible in the classical theory.

\begin{proposition}[Mirror involution]\label{prop:mirror-involution}
Fix a reflection \(r\) of \(\R^3\).  Then
\(\tau_\Lambda([\gamma])=[r\circ\gamma]\) is a well-defined homeomorphism
\[
  \tau_\Lambda:Y_\Lambda(K)\longrightarrow Y_\Lambda(\overline K)
\]
for every \(\Lambda\), compatible with the inclusions of sublevels.  If
\(K\) is amphichiral, \(\tau\) is an involution of the whole filtration of
\(K\) and restricts to an involution of \(I(K)\), so that
\(\{Y_\Lambda(K)\}_\Lambda\) is a filtered \(\Z/2\)-space.
\end{proposition}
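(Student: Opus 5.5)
The plan is to work first on representatives and then pass to the quotient. On constant-speed \(C^{1,1}\) embeddings \(\gamma:\R/2\pi\Z\to\R^3\), post-composition \(\gamma\mapsto r\circ\gamma\) has four properties.
\begin{enumerate}[label=\textup{(\arabic*)},leftmargin=2.4em]
\item It preserves constant speed and the \(C^{1,1}\) class.
\item It preserves length, since \(r\) is an isometry.
\item It preserves thickness. The three-point formula \eqref{eq:three-point} shows this, because \(R(rx,ry,rz)=R(x,y,z)\); so does the reach description.
\item It sends knot type \(K\) to the mirror \(\overline K\).
\end{enumerate}
Hence \(\gamma\mapsto r\circ\gamma\) maps the set of representatives of \(Y_\Lambda(K)\) onto that of \(Y_\Lambda(\overline K)\), for each \(\Lambda\). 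It is also continuous in the constant-speed \(C^1\) topology, because \(r\) is affine and \((r\circ\gamma)'=\bar r\gamma'\), where \(\bar r\) is the linear part of \(r\).

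Next I check that the map descends to the quotient by orientation-preserving Euclidean isometries \(g\) and source rotations or reflections \(\phi\). The identity
\[
r\circ g\circ\gamma\circ\phi=(rgr^{-1})\circ(r\circ\gamma)\circ\phi
\]
does this, together with the fact that \(rgr^{-1}\) is again an orientation-preserving Euclidean isometry; conjugation by \(r\) preserves \(\det\bar g=1\). So \(\tau_\Lambda\) is well defined. It is continuous by the universal property of quotient maps, since it is induced by a continuous equivariant map upstairs.

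The inverse of \(\tau_\Lambda\) is induced by \(r^{-1}\); here \(r^{-1}=r\) if \(r\) is a linear reflection, and in any case \(r^{-1}\) is also a reflection. So \(\tau_\Lambda\) is a homeomorphism. I will also note that the class \([\tau_\Lambda\gamma]\) does not depend on the choice of reflection: two reflections differ by an orientation-preserving isometry, which is already quotiented out. Compatibility with the inclusions \(Y_\Lambda\subset Y_{\Lambda'}\) is immediate, since all the \(\tau_\Lambda\) are restrictions of one map on \(Y_\infty\).

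For amphichiral \(K\) we have \(\overline K=K\), so each \(\tau_\Lambda\) is a self-map of \(Y_\Lambda(K)\). It is an involution because
\[
\tau^2[\gamma]=[r^2\circ\gamma],
\]
and \(r^2\) is an orientation-preserving isometry (the identity for a linear reflection). Since \(\tau\) preserves length, it preserves \(Y_{m_K}(K)=I(K)\). This gives the filtered \(\Z/2\)-structure.

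The only step needing care is the identification of the mirror knot type when amphichirality is read in the oriented sense. The quotient identifies source reflections, so \(Y_\Lambda(K)\) is really indexed by unoriented knot types. Amphichirality therefore should be taken to mean \(K\cong\overline K\) up to orientation reversal, which is exactly what the source reflections absorb. I expect this bookkeeping to be the main point to state carefully, rather than any analytic difficulty.
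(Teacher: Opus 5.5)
Your proposal is correct and follows the same route as the paper: reflections preserve length and thickness and normalise \(\Isom^+(\R^3)\), so \(\gamma\mapsto r\circ\gamma\) descends to the quotient, and the inverse is induced by \(r^{-1}\) (the paper simply uses \(r^2=\id\)). Your extra observations are correct refinements of the paper's terser argument: the map is literally independent of the choice of \(r\), since two reflections differ by an orientation-preserving isometry, and amphichirality need only be read in the unoriented sense, since source reflections are already quotiented out.
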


\begin{proof}
Reflections preserve length and thickness, and normalise
\(\Isom^+(\R^3)\), so the formula is independent of the representative and
of the choice of \(r\) up to conjugation.  It is a homeomorphism because
\(r^2=\id\).
\end{proof}

\begin{definition}[Mirror merge scale]\label{def:mirror-merge}
For amphichiral \(K\), choose a connected component \(A\) of \(I(K)\), and put
\[
  \mu^{\mathrm{mir}}_K(A)=\mu_K(A,\tau A),\qquad
  d^{\mathrm{mir}}_K(A)=\mu^{\mathrm{mir}}_K(A)-m_K.
\]
\end{definition}

\begin{corollary}[Mirror barrier criterion]
\label{cor:mirror-barrier-criterion}
If \(A\ne\tau A\), then \(0<d^{\mathrm{mir}}_K(A)<\infty\), and the barrier
is attained.
\end{corollary}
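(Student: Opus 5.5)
The corollary is a direct specialisation of \cref{thm:merge-ultrametric} to the pair of ideal components \(A\) and \(\tau A\). The first step is to check that \(\tau A\) is a connected component of \(I(K)\). By \cref{prop:mirror-involution}, for amphichiral \(K\) the map \(\tau=\tau_{m_K}\) is a homeomorphism \(Y_{m_K}(K)\to Y_{m_K}(\overline K)=Y_{m_K}(K)\), so it restricts to a self-homeomorphism of \(I(K)\). A homeomorphism permutes connected components, so \(\tau A\in\Conn(I(K))\). Here one should note that \(m_{\overline K}=m_K\), which is immediate since reflection preserves length and thickness. This makes the mirror merge scale \(\mu^{\mathrm{mir}}_K(A)=\mu_K(A,\tau A)\) an instance of the set-valued merge of \cref{def:merge}.

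The second step is finiteness. Pick \(a\in A\); then \(\tau a\in\tau A\). For any \(a'\in A\) and \(b'\in\tau A\), \cref{lem:finiteness-merge} gives \(\mu_K(a,a')\), \(\mu_K(a,\tau a)\) and \(\mu_K(\tau a,b')\) all finite. Since \(A\) and \(\tau A\) are each connected subsets of \(I(K)\subset Y_\Lambda(K)\) for every \(\Lambda\ge m_K\), each lies in a single component at every level. So at the finite level \(\Lambda=\mu_K(a,\tau a)+1\), the union \(A\cup\tau A\) lies in one connected component, and hence \(d^{\mathrm{mir}}_K(A)<\infty\). Equivalently, one may cite the finite-valuedness of \(d_K\) asserted in \cref{thm:merge-ultrametric}.

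The third step is positivity and attainment. Since \(A\ne\tau A\) are distinct components of \(I(K)=Y_{m_K}(K)\), the final clause of \cref{thm:merge-ultrametric} gives \(d_K(A,\tau A)>0\). Attainment comes from part (iii) of that theorem, applied to points \(a\in A\) and \(\tau a\in\tau A\) via \cref{cor:right-continuity}. At the level \(\mu=\mu_K(A,\tau A)\), the set \(A\cup\tau A\) lies in one component of \(Y_{\mu+\eps}(K)\) for every \(\eps>0\). Right continuity then places \(a\) and \(\tau a\) in one component of \(Y_\mu(K)\). The component of \(Y_\mu(K)\) containing \(a\) contains all of \(A\) by connectedness, and likewise the component containing \(\tau a\) contains all of \(\tau A\), so \(A\cup\tau A\) lies in a single component at level \(\mu\). \cref{cor:positive-barrier} then phrases this as an attained barrier.

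The only point needing care is the passage from point merges to set merges: the identity \(\mu_K(A,B)=\mu_K(a,b)\) for arbitrary \(a\in A\), \(b\in B\). This holds because connected sets inside a sublevel never straddle components. I expect no real obstacle beyond stating this explicitly.
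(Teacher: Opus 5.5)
Your proposal is correct and is the paper's argument. The paper's proof is the single line ``\cref{thm:merge-ultrametric} applied to \(A\) and \(\tau A\)'', and your steps make explicit what that line leaves implicit: \(\tau A\in\Conn(I(K))\) because \(\tau\) restricts to a self-homeomorphism of \(I(K)\), and \(\mu_K(A,\tau A)=\mu_K(a,\tau a)\) for any \(a\in A\) because connected subsets of a sublevel lie in a single component. (The finiteness of \(\mu_K(a,a')\) and \(\mu_K(\tau a,b')\) in your second step is not actually needed.)
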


\begin{proof}
\Cref{thm:merge-ultrametric} applied to \(A\) and \(\tau A\).
\end{proof}

\Cref{cor:mirror-barrier-criterion} separates the soft part of the problem
from the hard part.  Compactness converts mirror separation at the ideal
level into a positive, attained barrier automatically; what nobody can
currently do is exhibit an amphichiral knot type with a mirror-nonfixed
ideal component.

\begin{figure}[t]
\centering
\begin{tikzpicture}[scale=1.0]
\draw[fill=ropesand,draw=ropegrey,line width=0.7pt,rounded corners=12pt]
  (0,0) rectangle (9.2,2.6);
\node[font=\scriptsize,below right] at (0.1,-0.05) {$Y_\Lambda(K)$, $K$ amphichiral};
\draw[ropegrey,dashed,line width=0.8pt] (4.6,-0.1) -- (4.6,2.7);
\node[font=\scriptsize,above] at (4.6,2.7) {$\tau$};
\draw[ropeblue,line width=1pt,fill=ropelight] (2.3,1.3) ellipse (1.5 and 0.85);
\draw[ropeblue,line width=1pt,fill=ropelight] (6.9,1.3) ellipse (1.5 and 0.85);
\node[ropeblue,font=\scriptsize] at (2.3,1.3) {$A$};
\node[ropeblue,font=\scriptsize] at (6.9,1.3) {$\tau A$};
\draw[<->,>=Stealth,ropered,line width=0.9pt] (3.85,1.85) to[bend left=28] (5.35,1.85);
\node[ropered,font=\scriptsize,above] at (4.6,2.2) {$d^{\mathrm{mir}}_K(A)>0$};
\end{tikzpicture}
\caption{If an ideal component is not preserved by the mirror involution,
\cref{cor:mirror-barrier-criterion} produces a strictly positive attained
amount of extra rope needed to deform a tight configuration into its own
mirror image.  For the figure-eight knot this is
\cref{prob:figure-eight-mirror}.}
\label{fig:mirror}
\end{figure}

\subsection{The figure-eight knot, and a certified finite model}

The figure-eight knot \(4_1\) is the natural first test: it is amphichiral,
it is hyperbolic, so \cref{thm:hatcher-models}(ii) applies and its terminal
knot space is understood, and its ropelength is well studied numerically
(\cref{fig:numberline}).

\begin{problem}[Figure-eight mirror separation]
\label{prob:figure-eight-mirror}
Does there exist a connected component \(A\subset I(4_1)\) with
\(A\ne\tau A\)?
\end{problem}

A positive answer would produce, via
\cref{cor:mirror-barrier-criterion}, the first continuum ropelength barrier
for a knot.  We stress that \cref{prob:figure-eight-mirror} is a question
about the \emph{connected topology of the whole minimiser locus}, and is
stronger than the statement that some numerically computed tight
configuration fails to be congruent to its reflection.

A discrete analogue is, however, a certified theorem.  On the simple cubic
lattice one may replace thickness by unit edge length and length by edge
number, and replace ambient isotopy by the BFACF move set
(\cref{fig:bfacf}).

\begin{theorem}[Discrete figure-eight mirror merge, \cite{OzawaDiscrete}]
\label{thm:bfacf}
Let \(P\) be the \(30\)-edge simple cubic figure-eight seed of
\cite{OzawaDiscrete} and \(\tau P\) its reflection, and let
\(G_N(P,\tau P)\) be the seed-generated BFACF graph explored exhaustively
through polygons of at most \(N\) edges.  Then \(P\) and \(\tau P\) lie in
different components of \(G_{30}\), while a verified BFACF path joins them
using polygons of at most \(32\) edges.  Since simple cubic polygons have
even length, the seed-generated merge height is exactly
\(\mu^{\mathrm{BFACF}}(P,\tau P)=32\), with excess \(2\).
\end{theorem}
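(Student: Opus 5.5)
The statement is a finite, certified computation, so the plan is to exhibit two finite certificates and then invoke a parity argument.

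First I fix the combinatorial model.  Simple cubic polygons are closed self-avoiding lattice cycles, and BFACF moves are the length-preserving corner flips ($N\to N$) and the $N\leftrightarrow N\pm2$ moves, which add or remove a unit U-shaped detour.  I take $P$ as the explicit $30$-edge figure-eight polygon of \cite{OzawaDiscrete}, and $\tau P$ as its image under a fixed coordinate reflection, say $x\mapsto -x$.  Polygons are identified up to lattice translation, which I implement by normalising the lexicographically minimal vertex to the origin.  The graph $G_N(P,\tau P)$ has vertex set all polygons reachable from $P$ or $\tau P$ by BFACF moves through polygons with at most $N$ edges.

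\textbf{Step 1 (separation at $N=30$).}  By a standard lower bound on the minimal lattice edge number of $4_1$, together with the choice of $P$ as a seed of that edge number, no polygon in the explored component has fewer than $30$ edges.  Under the budget $N=30$ only corner flips are therefore available.  I run an exhaustive breadth-first search from $P$ using only corner flips, which is finite because the vertex set is bounded by the number of $30$-edge polygons up to translation.  I record the closed orbit as a hash set and check that the canonical form of $\tau P$ does not occur in it.  Because the component is closed under all legal moves at budget $30$, this enumeration certifies that $P$ and $\tau P$ lie in different components of $G_{30}$.

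\textbf{Step 2 (merge at $N=32$).}  I produce an explicit finite sequence $P=Q_0,Q_1,\dots,Q_k=\tau P$.  Each $Q_j$ has at most $32$ edges and each consecutive pair differs by a single BFACF move.  The certificate is checked independently in three ways: each $Q_j$ is a closed simple lattice cycle; each step is a legal BFACF move; and the edge counts stay at most $32$.  Because each move is an ambient isotopy of the polygon, the knot type is preserved automatically.

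\textbf{Step 3 (parity).}  Every simple cubic polygon has even edge number, since the coordinate displacements must sum to zero in each axis.  Consequently the levels strictly between $30$ and $32$ coincide with level $30$.  Combining this with Steps 1 and 2 gives $\mu^{\mathrm{BFACF}}(P,\tau P)=32$, and hence an excess of $2$.

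The main obstacle is Step 1.  Its exhaustive search must be genuinely complete, and the certificate must be reproducible.  I would attack this by storing canonical forms and reporting the orbit size, and by rerunning the search from $\tau P$ to confirm that the two resulting orbits are disjoint and mirror images of each other.
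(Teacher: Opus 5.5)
Your proposal is correct and follows the same route as the certified computation the paper cites from \cite{OzawaDiscrete}: an exhaustive, move-closed search at budget $30$ certifying separation, an independently checked explicit BFACF path through polygons of at most $32$ edges, and the parity observation that $G_{31}=G_{30}$. One thing to tighten: Step~1 need not invoke the minimal step number of $4_1$, because if the finite exhaustive search admits every budget-respecting move, including $-2$ moves, then the absence of $28$-edge polygons becomes an output of the certificate rather than an external input.
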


\begin{remark}[Exactly what \cref{thm:bfacf} says]
It is a statement about a specified lattice, specified seeds, an exhaustive
seed-generated search, and a specified move library.  It is \emph{not} a
statement that the full minimal lattice layer has two components, and it is
\emph{not} evidence that \(I(4_1)\) is mirror separated in the continuum.
Its value is that it is a certified finite object of exactly the shape that
a lattice-to-continuum comparison theorem would have to reproduce.
\end{remark}

\begin{figure}[t]
\centering
\begin{tikzpicture}[scale=0.62,line width=1pt]
\tikzset{lat/.style={ropegrey,line width=0.3pt,opacity=0.4}}
% move (+2)
\begin{scope}
  \foreach \x in {0,1,2,3}{\draw[lat] (\x,0)--(\x,3);}
  \foreach \y in {0,1,2,3}{\draw[lat] (0,\y)--(3,\y);}
  \draw[ropeblue,line width=1.6pt] (0,1)--(3,1);
  \draw[->,>=Stealth,ropered,line width=0.9pt] (1.5,1.2)--(1.5,1.8);
  \node[font=\scriptsize,below] at (1.5,-0.35) {before};
\end{scope}
\begin{scope}[shift={(4.2,0)}]
  \foreach \x in {0,1,2,3}{\draw[lat] (\x,0)--(\x,3);}
  \foreach \y in {0,1,2,3}{\draw[lat] (0,\y)--(3,\y);}
  \draw[ropeblue,line width=1.6pt] (0,1)--(1,1)--(1,2)--(2,2)--(2,1)--(3,1);
  \node[font=\scriptsize,below] at (1.5,-0.35) {after: $+2$ edges};
\end{scope}
% move (0)
\begin{scope}[shift={(9.4,0)}]
  \foreach \x in {0,1,2,3}{\draw[lat] (\x,0)--(\x,3);}
  \foreach \y in {0,1,2,3}{\draw[lat] (0,\y)--(3,\y);}
  \draw[ropeblue,line width=1.6pt] (0,1)--(1,1)--(1,2)--(3,2);
  \node[font=\scriptsize,below] at (1.5,-0.35) {before};
\end{scope}
\begin{scope}[shift={(13.6,0)}]
  \foreach \x in {0,1,2,3}{\draw[lat] (\x,0)--(\x,3);}
  \foreach \y in {0,1,2,3}{\draw[lat] (0,\y)--(3,\y);}
  \draw[ropeblue,line width=1.6pt] (0,1)--(2,1)--(2,2)--(3,2);
  \node[font=\scriptsize,below] at (1.5,-0.35) {after: $\pm0$ edges};
\end{scope}
\end{tikzpicture}
\vspace{0.15cm}

\begin{tikzpicture}[x=1cm,y=0.75cm]
\draw[->,>=Stealth,line width=0.8pt] (0,0)--(0,3.2);
\node[rotate=90,font=\scriptsize] at (-0.75,1.6) {edges $N$};
\foreach \y/\lab in {0.7/30,2.5/32}{
  \draw[ropegrey,dashed,line width=0.5pt] (0,\y)--(6.6,\y);
  \node[font=\scriptsize,left] at (-0.08,\y) {\lab};}
\filldraw[ropered] (1.5,0.7) circle (2pt);
\node[ropered,font=\scriptsize,below] at (1.5,0.55) {$P$};
\filldraw[ropered] (5.3,0.7) circle (2pt);
\node[ropered,font=\scriptsize,below] at (5.3,0.55) {$\tau P$};
\draw[ropeblue,line width=1.2pt] (1.5,0.7)--(1.5,2.5)--(5.3,2.5)--(5.3,0.7);
\node[ropeblue,font=\scriptsize,above] at (3.4,2.58) {first joined at $N=32$};
\end{tikzpicture}
\caption{Top: two of the BFACF moves on the simple cubic lattice, changing
the edge number by \(+2\) and by \(0\) respectively (the \(-2\) move is the
reverse of the first).  Bottom: the content of \cref{thm:bfacf} as a merge
diagram, the discrete analogue of \cref{fig:dendrogram}.}
\label{fig:bfacf}
\end{figure}

\begin{problem}[Lattice to continuum]\label{prob:lattice-continuum}
Construct filtration-controlled comparison maps from simple cubic BFACF
complexes, or from polygonal move complexes, to continuous ropelength
sublevels, and prove an interleaving or right-relaxed convergence theorem
strong enough to compare merge heights.  The figure-eight mirror pair should
be the first benchmark.  Polygonal approximation results for thickness and
ropelength, such as \cite{RawdonApprox}, are the natural starting point.
\end{problem}

\subsection{Degree zero as the first layer of one theory}

The reader should notice that
\cref{thm:merge-ultrametric,prop:width-properties} share the same
non-Archimedean min--max pattern, although they are not literally the same
homotopy-theoretic construction.  For connected merge, concatenating two
mergers gives
\(\mu(x,z)\le\max\{\mu(x,y),\mu(y,z)\}\); in positive degree the pinch or
concatenation operation gives
\(\wid(\alpha\beta)\le\max\{\wid\alpha,\wid\beta\}\).  The extra feature of
the \emph{connected} degree-zero theory is attainment, obtained from
compactness of \(Y_\Lambda(K)\).  Path merge does not presently share this
advantage.  Whether an analogous compactness or attainment principle exists
for families in positive degree is \cref{prob:attainment}.

\section{Three bridges to the Hatcher--Budney picture}
\label{sec:bridges}

\subsection{Closed knots in the round three-sphere}

Give \(S^3\) the round metric and let \(\mathscr K_{K,\Lambda}(S^3)\) be the
spherical ropelength sublevel of \(\mathscr K_K(S^3)\).

\begin{theorem}[Persistent recovery of Hatcher's knot space]
\label{thm:hatcher-recovery}
For every \(i\ge0\),
\(\colim_\Lambda\pi_i(\mathscr K_{K,\Lambda}(S^3))\cong
\pi_i(\mathscr K_K(S^3))\), and for every cofinal sequence of levels the
canonical map
\(\hocolim_\Lambda\mathscr K_{K,\Lambda}(S^3)\to\mathscr K_K(S^3)\)
is a weak homotopy equivalence.
\end{theorem}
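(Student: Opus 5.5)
The plan is to reduce the statement to the compact-capture machinery of \cref{sec:filtration}, applied to \(M=S^3\) with the round metric and to the unparametrised, unoriented model \(\mathscr K_K(S^3)\).

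\emph{Step 1: local boundedness upstairs.} By \cref{lem:local-tubular-stability}, applied with \(M=S^3\), the function \(\Rop_{S^3}\) is locally bounded on \(\Emb^\infty(S^1,S^3)\). That lemma is stated for a general Riemannian \(M\): thickness there is the normal injectivity radius, and only the tubular neighbourhood theorem is used. The one point to check is Step 3 of its proof. That step invokes the thickness formula of \cref{thm:thickness-formula}, which was stated in \(\R^3\). On the round sphere I would bypass the formula and argue directly with the normal exponential map. Fix \(r\) so that the exponential map on the closed \(2r\)-disc bundle of \(\gamma\) is an embedding. For \(\eta\) sufficiently \(C^2\)-close to \(\gamma\), the normal exponential map of \(\eta\) on the \(r\)-disc bundle is a \(C^1\)-small perturbation of the restriction of that of \(\gamma\). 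Injectivity is open in the \(C^1\) topology on compact domains once the map is a local diffeomorphism with a uniform lower bound on injectivity at scale. Alternatively, take the contradiction argument of Step 3 verbatim, with geodesic chords in place of straight chords, for chord lengths below the injectivity radius \(\pi\) of \(S^3\). Length is \(C^1\)-continuous, so \(\Rop_{S^3}\) is bounded on a neighbourhood. I expect this transfer of \cref{lem:local-tubular-stability} to the spherical setting to be the only step needing real care. Even so, it is routine, and for \(r<\pi/4\) the geometry is uniformly comparable to the Euclidean one via a chart.

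\emph{Step 2: descent to \(\mathscr K_K(S^3)\).} The space \(\mathscr K_K(S^3)\) is the orbit space of \(\Emb^\infty_K(S^1,S^3)\) under \(\Diff(S^1)\) acting by precomposition. Orbit maps of group actions are open, and \(\Rop_{S^3}\) is reparametrisation invariant. So \cref{lem:quotient-descent} gives local boundedness of the induced function. Consequently every compact subset of \(\mathscr K_K(S^3)\) lies in a single sublevel \(\mathscr K_{K,\Lambda}(S^3)\). This is exactly the compact-capture input recorded in \cref{cor:quotient-exhaustion}.

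\emph{Step 3: colimit and homotopy colimit.} The \(\pi_i\) statement then follows as in the proof of \cref{thm:homotopical-exhaustion}. For surjectivity, a based sphere has compact image. For injectivity, a based null-homotopy or path has compact image and lands in one finite level. For \(i=0\) one uses points and paths. Since \(\colim\) over \(\Lambda\) may be computed along any cofinal sequence \(\Lambda_1<\Lambda_2<\cdots\), the isomorphism holds for the sequence as well. For the homotopy colimit, model \(\hocolim\) by the mapping telescope \(T\). The standard fact is that \(\pi_i(T)\cong\colim_n\pi_i(\mathscr K_{K,\Lambda_n})\), again because spheres and their homotopies are compact and so land in finite stages of the telescope. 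The canonical map \(T\to\mathscr K_K(S^3)\) is then compatible with these identifications, and hence is a \(\pi_i\)-isomorphism at every basepoint. Basepoints in each stage suffice, because every point of \(T\) is joined by a path to a point of some stage. Thus the map is a weak homotopy equivalence, as in \cref{cor:telescope}.
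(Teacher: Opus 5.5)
Your proposal is correct and follows the paper's route. The paper's proof simply invokes \cref{cor:quotient-exhaustion,cor:telescope} for the round \(S^3\), and your Steps 1--3 unpack exactly those ingredients: local boundedness, descent along the open \(\Diff(S^1)\)-orbit map via \cref{lem:quotient-descent}, and compactness in the telescope. You also check that \cref{lem:local-tubular-stability} holds on the sphere without the Euclidean \cref{thm:thickness-formula}, using \(C^1\)-openness of embeddings of the compact normal disc bundle; the paper leaves this point implicit, and your treatment of it is sound.
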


\begin{proof}
\Cref{cor:quotient-exhaustion,cor:telescope} for the round \(S^3\).
\end{proof}

Combining with \cref{thm:hatcher-models} gives, for a torus or hyperbolic
knot,
\[
  \colim_\Lambda\pi_i(\mathscr K_{K,\Lambda}(S^3))\cong\pi_i(SO(4)/H_K),
\]
where \(H_K\) is the stabiliser of a symmetric placement.  By
\cref{thm:vanishing-width} the stable homotopy groups are already hit surjectively at the finite
level \(\Rop_{S^3}(J_0)\).  This is only a statement about classes that
survive to the terminal knot space: transient classes may still be born and
die at other finite levels, so the filtered positive-degree theory is not
thereby exhausted.

\subsection{Long knots by compactification}
\label{subsec:longknots}

A long knot has infinite length, so Euclidean ropelength is not defined for
it.  One-point compactification repairs this.  Let
\(c:\calK\to\Emb^\infty(S^1,S^3)\) send a long knot to its closure in
\(S^3=\R^3\cup\{\infty\}\), and put
\[
  \Rop_c(f)=\Rop_{S^3}(c(f)),
  \qquad
  \calK_{f,\Lambda}=\{h\in\calK_f:\Rop_c(h)\le\Lambda\}.
\]
The individual sublevels depend on the compactification, but their limit
does not.

\begin{theorem}[Long-knot exhaustion]\label{thm:longknot-exhaustion}
For every long knot \(f\) and every \(i\ge0\),
\(\colim_\Lambda\pi_i(\calK_{f,\Lambda},f)\cong\pi_i(\calK_f,f)\).
\end{theorem}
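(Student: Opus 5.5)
The argument is the compactness argument of \cref{thm:homotopical-exhaustion}, repeated for the function \(\Rop_c=\Rop_{S^3}\circ c\) on \(\calK_f\). As in \cref{rem:honest-exhaustion}, it only requires \(\Rop_c\) to be locally bounded on \(\calK_f\) in the \(C^\infty\) topology of long knots. Once this holds, compact capture holds for \(\calK_f\): a continuous \(F:P\to\calK_f\) with \(P\) compact has \(\sup_P\Rop_c(F(p))<\infty\). Surjectivity then follows by applying this to a based representative \(S^i\to\calK_f\). Injectivity follows by applying it to a based null-homotopy (or a homotopy between two finite-level representatives) \(S^i\times[0,1]\to\calK_f\), which has compact image and so lies in one sublevel \(\calK_{f,\Lambda}\). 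For \(i=0\) one uses points and paths. Since each sublevel contains \(f\) once \(\Lambda\ge\Rop_c(f)\), the colimit is over based groups, as in \cref{def:persistent-homotopy}.

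Local boundedness is obtained by factoring through \(c\) and applying \cref{lem:local-tubular-stability} in the round \(S^3\). The first check is that \(c(h)\) lies in \(\Emb^\infty(S^1,S^3)\) at all: near the pole \(\infty\), the closure is the image of the fixed line under inversion, a great circle arc through \(\infty\). It is therefore smooth and embedded there once a fixed parametrisation of \(S^1\) is chosen, one that is independent of \(h\) on a neighbourhood of the pole. The second check is that \(c\) is continuous from \(\calK\) with its topology to \(\Emb^\infty(S^1,S^3)\). Given \(h_0\in\calK_f\), the members of a neighbourhood \(U\) of \(h_0\) agree with the line outside a common compact interval, because \(\calK\) carries the colimit topology over support intervals and a compact family lies in one stage. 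On that interval the stereographic chart is a fixed diffeomorphism, so \(C^\infty\)-closeness transfers. Outside it, all closures coincide.

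With continuity of \(c\) in hand, \cref{lem:local-tubular-stability} gives a neighbourhood \(V\) of \(c(h_0)\) on which \(\Thi_{S^3}\ge r>0\) and \(\Len_{S^3}\le L\). Then \(\Rop_c\le L/r\) on \(c^{-1}(V)\cap U\). In its proof, the lemma must be run in the Riemannian setting, using the normal injectivity radius and the normal exponential map, as the paper's definitions after \cref{def:sublevels} already intend. Compact capture then follows exactly as in \cref{thm:compact-family-bound}.

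The main obstacle is the topology near the point at infinity: showing that \(c\) is continuous, and that a compact family in \(\calK_f\) has uniformly bounded support. If the topology on \(\calK\) is instead defined via \(c\), for instance as a subspace of embeddings \(S^1\to S^3\) fixed near the pole, both points become immediate. I would first prove the support bound for compact subsets of the colimit topology, a standard property of such colimits of closed embeddings, and fall back on the alternative topology if needed.
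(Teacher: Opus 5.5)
Your proposal is correct and follows the paper's route. The paper's proof pushes a compact family of long knots through \(c\) to a compact family in \(\Emb^\infty(S^1,S^3)\), applies \cref{thm:compact-family-bound}, and repeats the argument of \cref{thm:homotopical-exhaustion}; your check that \(c\) is well defined and continuous fills in what the paper leaves implicit. One correction to that check: in the colimit topology no neighbourhood of \(h_0\) lies in a single support stage (each stage has empty interior in the next), so prove continuity of \(c\) on each stage and deduce it from the universal property of the colimit, while the fact you actually need downstream, that a compact family lies in one stage, is correct.
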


\begin{proof}
The image under \(c\) of a compact family of long knots is a compact family
of smooth knots in \(S^3\); apply
\cref{thm:compact-family-bound} and repeat the proof of
\cref{thm:homotopical-exhaustion}.
\end{proof}

\Cref{cor:gramain-width,cor:transient-higher} are the two computations this
model supports: the Gramain class costs nothing, and every degree-\(\ge2\)
class is transient.  Thus the Gramain direction has zero excess, while every
degree-\(\ge2\) class visible at a finite level is transient.  No uniform
upper bound on the birth or death levels of all such transient classes is
asserted.

\subsection{The pole-marked complement bundle}

The round-metric filtration is the shortest route to Hatcher's homotopy
groups, but it discards the Euclidean picture entirely.  A moving
stereographic pole gives a second bridge which keeps the Euclidean geometry
and, as a bonus, remembers the knot complement.

For \(J\in\mathscr K_K(S^3)\) and \(p\in S^3\setminus J\), let
\(\sigma_p:S^3\setminus\{p\}\to\R^3\) be a stereographic projection from
\(p\).  Changing the target frame changes \(\sigma_p\) by a Euclidean
similarity, so
\[
  \calR(J,p)=\Rop_{\R^3}\bigl(\sigma_p(J)\bigr)
\]
is well defined.  Put
\(\calP(K)=\{(J,p):J\in\mathscr K_K(S^3),\ p\in S^3\setminus J\}\) and
\(\calP_\Lambda(K)=\{\calR\le\Lambda\}\).

\begin{theorem}[Pole bundle]\label{thm:pole-bundle}
The forgetful map \(\pi:\calP(K)\to\mathscr K_K(S^3)\), \((J,p)\mapsto J\),
is a locally trivial fibre bundle with fibre the knot complement
\(S^3\setminus K\); in particular it is a Serre fibration.
\end{theorem}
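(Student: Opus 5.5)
Local triviality will come from a \emph{local ambient trivialisation}: a continuous choice of diffeomorphisms of \(S^3\) carrying a base knot \(J_0\) to every nearby knot \(J\).  Fix \(J_0\in\mathscr K_K(S^3)\).  I will produce a neighbourhood \(W\) of \(J_0\) and a continuous map
\[
  W\to\Diff(S^3),\qquad J\mapsto\phi_J,
\]
with \(\phi_J(J_0)=J\) as unparametrised submanifolds and \(\phi_{J_0}=\id\).  Given such a section, the trivialisation is
\[
  W\times(S^3\setminus J_0)\longrightarrow\pi^{-1}(W),\qquad
  (J,p)\longmapsto(J,\phi_J(p)),
\]
with inverse \((J,q)\mapsto(J,\phi_J^{-1}(q))\).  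Both maps are continuous because evaluation and inversion are continuous on \(\Diff(S^3)\) in the \(C^\infty\) (even \(C^0\)) topology.  The map sends fibres to fibres, since \(\phi_J\) carries \(S^3\setminus J_0\) onto \(S^3\setminus J\).  It remains to check that \(\calP(K)\) carries the subspace topology from \(\mathscr K_K(S^3)\times S^3\).  In that topology \(\pi^{-1}(W)\) is the open set of pairs \((J,p)\) with \(p\notin J\), and openness follows because the condition \(p\notin J\) is open in the product.

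The section \(J\mapsto\phi_J\) comes from the standard fact that the restriction map \(\Diff(S^3)\to\Emb(S^1,S^3)\) is a locally trivial fibration, due to Cerf and Palais.  I would pass to the unparametrised quotient \(\mathscr K_K(S^3)=\Emb_K/\Diff(S^1)\).  To do so I first choose a local section of \(\Emb_K\to\mathscr K_K\) near \(J_0\).  Concretely, write each nearby \(J\) as a normal graph \(u_J\) over \(J_0\), as in Step 1 of the proof of \cref{lem:local-tubular-stability}.  The parametrisation \(\theta\mapsto\exp_{J_0(\theta)}(u_J(\theta))\) then depends continuously on the unparametrised knot \(J\).  Alternatively, I would write down \(\phi_J\) explicitly: inside the tube \(N_{2r}\), translate each normal fibre so that \(0\) goes to \(u_J(\theta)\), using a bump function in the radial variable that equals \(1\) on \(N_{r}\) and vanishes near \(\partial N_{2r}\); set \(\phi_J=\id\) outside the tube.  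For \(u_J\) small in \(C^1\) this is a diffeomorphism, and it depends continuously on \(J\).  This explicit route is the one I would actually write, because it avoids citing Palais.

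The fibre over \(J_0\) is \(S^3\setminus J_0\cong S^3\setminus K\), and since \(\mathscr K_K(S^3)\) is connected, all fibres are homeomorphic to it.  Local triviality implies that \(\pi\) is a Serre fibration, since a map that is locally trivial over a space is a Serre fibration (every fibre bundle is).  The main obstacle I expect is the bookkeeping of topologies.  One issue is verifying that \(J\mapsto u_J\) is continuous from the quotient topology on \(\mathscr K_K(S^3)\) into \(C^\infty\) sections over \(J_0\); this reduces to the openness of the \(\Diff(S^1)\)-orbit map together with the implicit function theorem for the normal projection.  The other is checking that \(\phi_J\) is a diffeomorphism uniformly on \(W\), which I would secure by shrinking \(W\) until \(\|u_J\|_{C^1}\) is small compared with the bump function's derivative bound.
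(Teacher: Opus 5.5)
Your proposal is correct and is essentially the paper's argument: you take a continuous family of ambient diffeomorphisms \(h_J\) with \(h_J(J_0)=J\) over a normal-graph neighbourhood of \(J_0\), and trivialise by \((J,x)\mapsto(J,h_J(x))\). The only difference is that the paper just invokes isotopy extension to get the family, whereas you build it explicitly by fibrewise translation in the tube and spell out bookkeeping the paper leaves implicit: continuity of \(J\mapsto u_J\) on the quotient, and that a locally trivial bundle is a Serre fibration.
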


\begin{proof}
Fix \(J_0\).  By the isotopy extension theorem there is, on a small
normal-graph neighbourhood \(U\) of \(J_0\), a continuous family of ambient
diffeomorphisms \(h_J\) with \(h_J(J_0)=J\).  Then
\((J,x)\mapsto(J,h_J(x))\) trivialises \(\pi\) over \(U\).
\end{proof}

\begin{theorem}[Pole-space exhaustion]\label{thm:pole-exhaustion}
Every compact subset of \(\calP(K)\) lies in \(\calP_\Lambda(K)\) for some
finite \(\Lambda\).  Hence
\(\colim_\Lambda\pi_i(\calP_\Lambda(K))\cong\pi_i(\calP(K))\) for all
\(i\ge0\), and likewise for homology and mapping telescopes.
\end{theorem}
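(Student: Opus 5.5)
The argument parallels compact capture (\cref{thm:compact-family-bound}): the real work is a local boundedness statement for \(\calR\) on \(\calP(K)\). After that, exhaustion follows word for word as in \cref{thm:homotopical-exhaustion,cor:homology-exhaustion,cor:telescope}.

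\emph{Step 1: \(\calR\) is locally bounded.} Fix \((J_0,p_0)\in\calP(K)\). Since \(J_0\) is compact and misses \(p_0\), we have \(d_0=\operatorname{dist}_{S^3}(p_0,J_0)>0\). Choose a neighbourhood \(U\) of \(J_0\) and a ball \(B\) of radius \(d_0/3\) about \(p_0\) such that every \(J\in U\) lies in the \(d_0/3\)-neighbourhood of \(J_0\). Then every pair \((J,p)\in U\times B\) satisfies \(\operatorname{dist}(p,J)\ge d_0/3\). On the compact region \(\{(p,x):p\in\bar B,\ \operatorname{dist}(p,x)\ge d_0/3\}\), the stereographic projections \(\sigma_p\) have all derivatives up to second order bounded, and their conformal factors are bounded above and below. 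This holds if we normalise \(\sigma_p\) by a rotation carrying \(p\) to a fixed pole, which we may do since \(\calR\) is similarity invariant. Hence \(\eta\mapsto\sigma_p\circ\eta\) is continuous from the \(C^2\) topology on curves avoiding that region, jointly in \(p\). Consequently the image of \(U\times B\) under \((J,p)\mapsto\sigma_p(J)\) lies in a neighbourhood of \(\sigma_{p_0}(J_0)\) in \(\Emb^\infty(S^1,\R^3)\), modulo similarities, once \(U\) and \(B\) are shrunk. \Cref{lem:local-tubular-stability} applied at \(\sigma_{p_0}(J_0)\) then gives \(\Thi\ge r\) and \(\Len\le L\), so \(\calR\le L/r\) on a neighbourhood of \((J_0,p_0)\).

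An alternative route to the same bound avoids joint continuity. Bound the Euclidean length by the spherical length times the maximum conformal factor on the region. Bound the Euclidean thickness below using the fact that a conformal diffeomorphism with bounded \(C^2\) norm on a compact set distorts normal injectivity radius by a controlled factor. Then combine this with \cref{lem:local-tubular-stability} on \(S^3\). I would use whichever route is cleaner.

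\emph{Step 2: capture and exhaustion.} A compact subset of \(\calP(K)\) is covered by finitely many of these neighbourhoods, so \(\calR\) is bounded on it; this is the first assertion. For homotopy groups, surjectivity and injectivity of \(\colim_\Lambda\pi_i(\calP_\Lambda(K))\to\pi_i(\calP(K))\) follow because spheres and the homotopies \(S^i\times[0,1]\) have compact images. Homology follows because singular chains have compact support and filtered colimits are exact. The telescope statement follows as in \cref{cor:telescope}. The bundle structure of \cref{thm:pole-bundle} is not needed here.

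\emph{Main obstacle.} The only delicate point is the joint continuity in Step 1, that is, uniformity in the pole \(p\) as well as in \(J\), including the similarity normalisation of \(\sigma_p\). This is exactly why we keep \(p\) at a definite distance \(d_0/3\) from all nearby knots: it confines everything to a compact set where stereographic projection is uniformly \(C^2\)-controlled.
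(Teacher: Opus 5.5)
Your proof is correct and follows essentially the paper's argument. A lower bound on the distance from the pole to the knot gives uniform \(C^2\) control of the stereographic projections; \cref{lem:local-tubular-stability} then makes \(\calR\) locally bounded; and a finite subcover together with the compact-image argument of \cref{thm:homotopical-exhaustion} gives exhaustion. The paper phrases the uniform control over finitely many local trivialisations, whereas you work directly on product neighbourhoods \(U\times B\) and rightly note that the bundle structure is not needed; this is a cosmetic difference only.
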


\begin{proof}
On a compact subset the spherical distance from the pole to the knot is
bounded below, so on finitely many local trivialisations the projections
\(\sigma_p\) and their first two derivatives are uniformly controlled along
the family.  \Cref{lem:local-tubular-stability} then bounds the Euclidean
thickness below and the length above, so \(\calR\) is locally bounded; now
argue as in \cref{thm:homotopical-exhaustion}.
\end{proof}

\begin{figure}[t]
\centering
\begin{tikzpicture}[scale=0.87]
\draw[ropegrey,line width=0.8pt] (0,0) circle (1.5);
\draw[ropegrey,line width=0.5pt] (0,0) ellipse (1.5 and 0.5);
\draw[ropeblue,line width=1.2pt]
  (-0.85,0.25) .. controls (-0.35,0.95) and (0.45,-0.55) .. (0.9,0.2)
               .. controls (0.45,0.75) and (-0.4,-0.5) .. (-0.85,0.25);
\filldraw[ropered] (0.55,-1.0) circle (1.8pt);
\node[ropered,font=\scriptsize,below right] at (0.6,-1.05) {$p$};
\node[ropeblue,font=\scriptsize,above] at (0,1.05) {$J$};
\node[font=\scriptsize,below] at (0,-1.75) {$S^3$};
\draw[->,>=Stealth,line width=0.9pt] (1.85,0) -- (2.85,0);
\node[font=\scriptsize,above] at (2.35,0.05) {$\sigma_p$};
\begin{scope}[shift={(4.9,0)}]
  \draw[ropegrey,line width=0.5pt] (-1.35,-0.9) rectangle (1.35,0.9);
  \draw[ropeblue,line width=1.2pt]
    (-0.85,0.1) .. controls (-0.35,0.85) and (0.55,-0.65) .. (0.95,0.15)
                .. controls (0.5,0.7) and (-0.4,-0.6) .. (-0.85,0.1);
  \node[font=\scriptsize,below] at (0,-1.15) {$\sigma_p(J)\subset\R^3$};
\end{scope}
\begin{scope}[shift={(9.0,0)}]
  \draw[rounded corners=10pt,fill=ropesand,draw=ropegrey,line width=0.7pt]
    (-1.5,0.55) rectangle (1.5,1.35);
  \node[font=\scriptsize] at (0,0.95) {$\calP(K)$};
  \draw[->,>=Stealth,line width=0.8pt] (0,0.5) -- (0,-0.25);
  \node[font=\scriptsize,right] at (0.08,0.15) {$\pi$};
  \draw[rounded corners=10pt,fill=ropesand,draw=ropegrey,line width=0.7pt]
    (-1.5,-1.1) rectangle (1.5,-0.3);
  \node[font=\scriptsize] at (0,-0.7) {$\mathscr K_K(S^3)$};
  \node[font=\scriptsize,right,align=left] at (1.6,0.95)
    {fibre\\ $S^3\setminus K$};
\end{scope}
\end{tikzpicture}
\caption{The pole-marked space.  A point is a knot in \(S^3\) together with
a projection pole, and stereographic projection from that pole makes
Euclidean ropelength available.  Forgetting the pole is a bundle with fibre
the knot complement (\cref{thm:pole-bundle}), so its filtration sees the
knot space, the complement, and the geometry of thick representatives at the
same time.}
\label{fig:pole}
\end{figure}

\begin{remark}
The fibre is the reason to care.  In \(\calP(K)\) the topology of the
complement --- the source of the JSJ decomposition, of the symmetry group,
and of the fundamental group used in \cref{thm:hatcher-models} --- is
present as literal geometry, and the filtration parameter is a Euclidean
quantity.  This makes \(\calP(K)\) the natural place to look for the
satellite phenomena of \cref{prob:satellite-width}.
\end{remark}

\section{Finite diagrammatic models: a programme}
\label{sec:program}

This section is programmatic.  Nothing in it is used elsewhere, and we
flag clearly which statements are proved (none), conjectural, or merely
suggestive.  It is included because the diagrammatic side is where a
newcomer with a combinatorial background can start.

A ropelength-filtered Reidemeister graph, as in \cite{OzawaFinite}, has
vertices the components of regular projection fibres below a level and edges
the liftable Reidemeister transitions.  It is adapted to \(\pi_0\).  Its
fundamental group is \emph{not} the fundamental group of the knot space: a
loop in the graph may arise from performing two independent moves in
opposite orders, and the corresponding square should be filled by a
two-cell.  More generally, two-parameter families meet codimension-two
strata of the projection discriminant, producing relations between
Reidemeister sequences.  The expected hierarchy is
\cref{tab:discriminant}.

\begin{table}[h]
\small
\begin{tabular}{@{}ccl@{}}
\toprule
parameter dimension & discriminant data & diagrammatic object\\
\midrule
\(0\) & regular projection chambers & vertices\\
\(1\) & \(R1,R2,R3\) walls & Reidemeister edges\\
\(2\) & codimension-two events & movie-relation two-cells\\
\(q\) & codimension-\(q\) strata & higher cells or coherences\\
\bottomrule
\end{tabular}
\caption{The expected degree-by-degree correspondence.  Only the first two
rows are established, and only in the sense of \(\pi_0\).}
\label{tab:discriminant}
\end{table}

\begin{figure}[t]
\centering
\begin{tikzpicture}[scale=1.0]
\draw[rounded corners=14pt,fill=ropesand,draw=ropegrey,line width=0.7pt]
  (0,0) rectangle (7.6,3.0);
\node[font=\scriptsize,below right] at (0.1,-0.05)
  {space of projection directions and configurations};
% walls
\draw[ropeblue,line width=1pt] (1.1,0.1) .. controls (2.0,1.3) and (2.3,2.0) .. (2.0,2.95);
\draw[ropeblue,line width=1pt] (5.9,0.1) .. controls (5.1,1.3) and (5.0,2.1) .. (5.4,2.95);
\draw[ropeblue,line width=1pt] (0.1,2.1) .. controls (2.2,1.5) and (5.0,1.5) .. (7.5,2.0);
\filldraw[ropered] (1.72,1.72) circle (2pt);
\filldraw[ropered] (5.12,1.66) circle (2pt);
\node[ropered,font=\scriptsize,above left] at (1.68,1.8) {codim $2$};
\node[ropeblue,font=\scriptsize,right] at (1.35,0.35) {walls (codim $1$)};
\node[font=\scriptsize] at (3.6,0.7) {chamber};
\node[font=\scriptsize] at (3.6,2.55) {chamber};
\end{tikzpicture}
\caption{The discriminant picture behind \cref{tab:discriminant}.  Chambers
give diagrams, walls give Reidemeister moves, and the codimension-two
points give the relations among moves that a two-complex would have to
encode.}
\label{fig:discriminant}
\end{figure}

For one-parameter isotopies, the transversality mechanism leading to the
classical Reidemeister moves can be formulated cleanly in smooth terms; see
Queffelec \cite{Queffelec}.  What is conjectural below is not that classical
statement, but the \emph{filtered, fibre-aware, higher-coherence} comparison.

\begin{conjecture}[Higher Reidemeister comparison]
\label{conj:higher-reidemeister}
There is a filtered lifted Reidemeister complex
\(\calR^{\mathrm{lift}}_{\Lambda,u}(K)\), retaining the topology of the
regular projection fibres and all higher discriminant coherences, whose
one-skeleton is the lifted Reidemeister multigraph of \cite{OzawaFinite},
such that for every strict level \(\Lambda\), or after arbitrarily small
right-relaxation, there is a weak equivalence
\(|\calR^{\mathrm{lift}}_{\Lambda,u}(K)|\simeq
\calX^{<}_{\Lambda,u}(K)\) with the corresponding projection-framed thick
knot sublevel.
\end{conjecture}

The phrase ``retaining the topology of the fibres'' is essential: replacing
a whole chamber by a single abstract vertex discards loops already present
inside the chamber, so the correct object is closer to a homotopy colimit
over the discriminant stratification than to a CW complex built from the
abstract graph.

\begin{conjecture}[Right-relaxed parametric smoothing]
\label{conj:right-relaxed-smoothing}
Let \(P\) be a compact polyhedron, \(A\subset P\) a subpolyhedron, and
\(F:P\to\wh\calE_{K,\Lambda}(M)\) a strong \(C^{1,1}\)-continuous family
that is smooth near \(A\).  For every \(\eps>0\) there is a homotopy rel
\(A\), through positive-reach families of ropelength \(<\Lambda+\eps\), from
\(F\) to a continuous family of smooth embeddings.
\end{conjecture}

The single-knot smoothing mechanism is recorded in
\cref{lem:single-knot-smoothing}; the relative higher-dimensional statement
requires coherent parameter-dependent mollification, uniform reach control,
and a collar argument.
If \cref{conj:right-relaxed-smoothing} holds, then the smooth and
positive-reach filtrations have the same right-relaxed homotopy germ at every
level, which removes the caveat of \cref{rem:smoothing}.

\section{Problems, graded}
\label{sec:problems}

The subject has few practitioners and a large surface area, so we list
problems by the amount of machinery they need rather than by importance.
Problems in the first group are suitable as a first project; those in the
second require some investment; those in the third are the ones we do not
know how to start.

\subsection*{Entry problems}

\begin{problem}[The unknot just above the minimum]\label{prob:unknot-window}
Describe \(Y_\Lambda(U)\) for \(\Lambda\) slightly larger than \(2\pi\).  Is
it contractible?  Is it connected for every \(\Lambda\)?  Are
\(\Conn(Y_\Lambda(U))\) and \(\pi_0(Y_\Lambda(U))\) finite
(\cref{q:finite-components,prob:tameness})?  A complete answer for
\(\Lambda\le2\pi+\eps\) would already be the first computation of a
nonminimal ropelength sublevel.
\end{problem}

\begin{problem}[Compute an untying profile]\label{prob:profile}
For explicit unknotted thick configurations --- a doubled loop, a coiled
configuration, the configurations of \cite{PieranskiGordian} --- compute
numerically the path min--max profile of \cref{fig:untying}, estimate
\(u_{\mathrm p}(x)\), and seek rigorous lower bounds for
\(u_{\mathrm c}(x)\).  Even convincing numerical evidence that
\(e_{\mathrm p}(x)>0\)
for one configuration, together with a rigorous error analysis, would be a
substantial contribution.
\end{problem}

\begin{problem}[A careful ropelength table]\label{prob:table}
Compile a table of the best current ropelength bounds and numerical values
for knots of small crossing number, stated consistently in one normalisation
and with sources and error estimates attached
(cf.\ \cref{conv:normalisation}).  The literature currently mixes the radius
and diameter conventions, which is a recurring source of confusion for
newcomers.
\end{problem}

\begin{problem}[More discrete merge heights]\label{prob:more-bfacf}
Extend the computation of \cref{thm:bfacf} to further seeds and further
amphichiral knot types, and to other lattices.  Is the seed-generated merge
excess bounded independently of the seed?  Does it correlate with any
classical invariant of the knot type?
\end{problem}

\begin{problem}[Mirror action in low degree]\label{prob:mirror-low-degree}
For an amphichiral knot type, determine the induced \(\Z/2\)-action of
\cref{prop:mirror-involution} on \(\pi_0(Y_\Lambda(K))\) and on
\(H_*(Y_\Lambda(K);\Z/2)\) in examples, and identify the level at which the
action becomes trivial.
\end{problem}

\subsection*{Intermediate problems}

\begin{problem}[Attainment in positive degree]\label{prob:attainment}
Is the infimum defining the width \(\wid_i^K(\alpha;\gamma_*)\) of
\cref{def:width} attained?  Equivalently, is there a compactness statement
for spaces of thick families analogous to
\cref{thm:knot-sublevel-compact}?  A positive answer would upgrade
\cref{prop:width-properties} to the positive-degree analogue of
\cref{thm:merge-ultrametric}.
\end{problem}

\begin{problem}[Tameness]\label{prob:tameness}
Are the sublevels \(Y_\Lambda(K)\) locally path connected, so that connected
and path components agree (\cref{rem:path-vs-conn})?  Do \(Y_\Lambda(K)\) or
\(\calE_{K,\Lambda}\) have the homotopy type of CW complexes?  Are the
persistent homology modules pointwise finite dimensional over a field?  Either would license the barcode language of
\cref{fig:barcode}; see \cref{rem:no-barcodes}.  Semialgebraic
approximation by polygons with a fixed number of edges is the natural
approach.
\end{problem}

\begin{problem}[Produce a transient class]\label{prob:transient}
By \cref{cor:transient-higher} every class in \(\pi_i(\calK_{f,\Lambda})\)
with \(i\ge2\) dies at a finite level.  Exhibit one that is nonzero.
Equivalently: find a sphere of thick long knots that cannot be filled within
its own budget.
\end{problem}

\begin{problem}[Satellite widths]
See \cref{prob:satellite-width}.  This is the first place where
\cref{thm:vanishing-width} does not apply, and therefore the most promising
source of a positive answer to \cref{prob:first-positive-width}.
\end{problem}

\begin{problem}[Comparing the models]\label{prob:models}
Compare the round \(S^3\) filtration, the Euclidean normalised filtration,
and the pole-marked filtration of \cref{sec:bridges}.  Find explicit
interleavings, or conformal gauges, isolating metric-dependent quantitative
data from the topological limit.
\end{problem}

\subsection*{Hard problems}

\begin{problem}[A standard thick Gordian knot]\label{prob:gordian-knot}
In the standard unit-thickness model of this paper, find a knot type \(K\)
--- possibly \(K=U\) --- and a configuration \(x\) of length \(L\) that is
not path connected in \(Y_L(K)\) to a chosen tight target component
\(A_0\).  By \cref{prop:fixed-length-retract} this is exactly a
fixed-length physical obstruction.  Stronger targets are to prove
\(u_{\mathrm p}(x)>L\), or even the connected certificate
\(u_{\mathrm c}(x)>L\).  Ayala's Gordian unknots
\cite{AyalaGeometric} occur in the related thinner model \(\mathcal U_1\)
and therefore do not settle this exact formulation.
\end{problem}

\begin{problem}[Disconnected ideal strata]\label{prob:disconnected-ideal}
Prove that \(\Conn(I(K))\) has more than one element for some knot type
\(K\); or prove that \(I(K)\) is always connected.  The symmetric
criticality results of \cite{CEFM} produce distinct \emph{critical}
configurations; distinct \emph{minimising} components are a strictly
stronger requirement.
\end{problem}

\begin{problem}[The figure-eight mirror barrier]
See \cref{prob:figure-eight-mirror}, and \cref{prob:lattice-continuum} for
the comparison with the certified discrete statement of \cref{thm:bfacf}.
\end{problem}

\begin{problem}[The first positive-degree phenomenon]
See \cref{prob:first-positive-width}.  We repeat it here because it is, in
our view, the question on which the interest of the whole positive-degree
theory rests.
\end{problem}

\begin{problem}[Higher diagrammatics]
Prove \cref{conj:higher-reidemeister}, first in degree one; and prove
\cref{conj:right-relaxed-smoothing}, which would remove the
right-relaxation caveat of \cref{rem:smoothing}.
\end{problem}

\section{Concluding perspective}
\label{sec:conclusion}

There are three levels of the same object, and they fit together as in
\cref{fig:threelevels}.  Classical knot theory computes \(\pi_0\) of the
whole embedding space.  Hatcher and Budney compute the homotopy type of one
component.  Ropelength inserts a geometric scale between them, and
\cref{thm:homotopical-exhaustion} proves that nothing is lost in passing to
the limit.

\begin{figure}[t]
\centering
\begin{tikzpicture}[scale=0.86]
\node[draw=ropegrey,fill=ropesand,rounded corners=8pt,inner sep=6pt,
      align=center,font=\small] (a) at (0,0)
  {finite thick-knot\\ geometry};
\node[draw=ropegrey,fill=ropelight,rounded corners=8pt,inner sep=6pt,
      align=center,font=\small] (b) at (5.0,0)
  {ropelength-filtered\\ persistent homotopy};
\node[draw=ropegrey,fill=ropesand,rounded corners=8pt,inner sep=6pt,
      align=center,font=\small] (c) at (10.2,0)
  {Hatcher--Budney\\ homotopy type};
\draw[->,>=Stealth,line width=0.9pt] (a) -- (b);
\draw[->,>=Stealth,line width=0.9pt] (b) -- (c)
  node[midway,above,font=\scriptsize] {$\Lambda\to\infty$};
\node[font=\scriptsize,below] at (2.5,-0.75) {barriers, merges, widths};
\node[font=\scriptsize,below] at (7.6,-0.75)
  {\cref{thm:homotopical-exhaustion}};
\end{tikzpicture}
\caption{The organising diagram of the paper.  The right-hand arrow is a
theorem; the left-hand arrow is where all the open problems live.}
\label{fig:threelevels}
\end{figure}

What the present paper adds to that slogan is a sharper sense of where the
content is.  Compactness of normalised sublevels makes the degree-zero
theory quantitative: barriers, once they exist, are positive and attained,
and merge levels assemble into an ultrametric.  Symmetry, by contrast, makes
the positive-degree theory collapse in exactly the cases that are best
understood: if a knot space is an orbit of the isometry group, no homotopy
class costs any rope.  The two statements point in opposite directions and
together they say that the interesting phenomena should be sought in
degree zero, and for stable positive-degree width most naturally in directions not generated
by an ambient isometry orbit, while transient classes remain possible even in orbit-model cases.

The central long-term question is therefore not how short a knot can be.  It
is how the topology of the space of \emph{all} realisations of a knot is
assembled as geometric freedom increases, and which parts of that assembly
admit finite certificates.  The subject is unusual in having, at the moment,
more well-posed elementary questions than practitioners; we hope that the
list in \cref{sec:problems} is useful to anyone who would like to change
that.

\appendix

\section{Notation}
\label{app:notation}

\begin{table}[h]
\small
\begin{tabular}{@{}ll@{}}
\toprule
\(\Len,\Thi,\Rop\) & length, thickness (tube radius), ropelength
  \(=\Len/\Thi\)\\
\(\operatorname{dcsd}\) & doubly critical self-distance\\
\(m_K=\Rop(K)\) & minimal ropelength of the knot type \(K\)\\
\(\calE_K(M)\) & component of \(\Emb^\infty(S^1,M)\) of type \(K\)\\
\(\calE_{K,\Lambda}(M)\) & its ropelength sublevel \(\{\Rop\le\Lambda\}\)\\
\(\wh\calE_{K,\Lambda}(M)\) & the \(C^{1,1}\) positive-reach sublevel\\
\(\calU_K,\mathscr K_K\) & oriented and unoriented unparameterised knot
  spaces\\
\(\calK,\calK_f\) & space of long knots and one of its components\\
\(Y_\Lambda(K)\) & \(\{\Thi\ge1,\Len\le\Lambda\}\) modulo rigid motions\\
\(I(K)=Y_{m_K}(K)\) & ideal stratum\\
\(\Conn(X),\pi_0(X)\) & connected components and path components, respectively\\
\(\mu_K,\mu_K^{\rm path}\) & connected and path merge levels\\
\(d_K=\mu_K-m_K\) & attained ultrametric on connected components of \(I(K)\)\\
\(u_{\rm c},e_{\rm c};\ u_{\rm p},e_{\rm p}\) & connected and path-budget untying thresholds/excesses\\
\(\wid_i^K(\alpha),\exc_i^K(\alpha)\) & ropelength width and excess width\\
\(\sigma_i(K)\) & stabilisation level in degree \(i\)\\
\(\tau\) & mirror involution\\
\(\calP(K),\calR\) & pole-marked space and stereographic ropelength\\
\bottomrule
\end{tabular}
\end{table}

\section{An annotated guide to the literature}
\label{app:literature}

This appendix is meant for a reader entering the subject.  It is short by
design.

\smallskip\noindent\textbf{Thickness and ropelength: the basics.}  Start with
Cantarella--Kusner--Sullivan \cite{CKS}: existence and \(C^{1,1}\)
regularity of minimisers, the thickness formula, and the first serious lower
bounds.  Gonzalez--Maddocks \cite{GonzalezMaddocks} give the three-point
characterisation \eqref{eq:three-point}, which is the form of the definition
one actually computes with; Litherland--Simon--Durumeric--Rawdon \cite{LSDR}
is the companion topological account, and Federer \cite{Federer} is the
origin of ``reach''.  The volume \cite{IdealKnots} collects the early
physical and numerical literature.

\smallskip\noindent\textbf{Bounds and numbers.}  F\'ary \cite{Fary} and
Milnor \cite{Milnor} give the elementary bound of
\cref{prop:farymilnor-bound}; Diao \cite{Diao} and then Denne--Diao--Sullivan
\cite{DDS} give the current record.  Buck--Simon \cite{BuckSimon} relate
ropelength to crossing number.  For numerics, Ashton--Cantarella--Piatek--Rawdon
\cite{ACPR} is the standard reference, and \cite{Ropelength12} is a recent
large-scale tabulation.

\smallskip\noindent\textbf{Criticality and symmetry.}  Cantarella--Fu--Kusner--Sullivan
\cite{CFKS} develop the first-order theory of ropelength.
Cantarella--Ellis--Fu--Mastin \cite{CEFM} prove symmetric criticality and, in
particular, exhibit knot types with several non-congruent critical
configurations; this is the paper to read before attacking
\cref{prob:disconnected-ideal}.

\smallskip\noindent\textbf{Gordian phenomena and the importance of the model.}
Pieranski--Przyby\l{}--Stasiak \cite{PieranskiGordian} is the numerical
origin of the Gordian-unknot question.  Coward--Hass \cite{CowardHass} is
the foundational theorem separating topological and physical link theory;
Kusner--Kusner \cite{KusnerKusner} allow a total-length constraint.
Ayala--Hass \cite{AyalaHass} give broad families of Gordian unlinks, while
Bauermeister \cite{Bauermeister} treats the more permissive Gehring setting.
Ayala \cite{AyalaGeometric} proves Gordian unknots in a thinner model and is
essential reading precisely because it shows how sensitively the answer
depends on the relation between tube radius and curvature bound.

\smallskip\noindent\textbf{Spaces of knots.}  Gramain \cite{Gramain} is
short and is the right starting point.  Hatcher's proof of the Smale
conjecture \cite{HatcherSmale} underlies everything; \cite{HatcherSpaces}
and the unfinished manuscript \cite{HatcherModuli} contain the models used in
\cref{thm:hatcher-models}.  Budney \cite{BudneyLittleCubes,BudneyTopology}
gives the operadic and recursive structure of long knot spaces.  Queffelec
\cite{Queffelec} is a useful modern reference for the transversality route
to Reidemeister's theorem.  Sinha
\cite{Sinha} is a readable entry point to the higher-dimensional and
cosimplicial side.  For the group-action input see Dinkelbach--Leeb
\cite{DinkelbachLeeb}.

\smallskip\noindent\textbf{Knot energies, for contrast.}  O'Hara
\cite{OHara} and Freedman--He--Wang \cite{FHW} study a different functional
on the same space of curves.  Much of the framework of this paper applies
verbatim to any functional that is locally bounded on the embedding space
and has compact normalised sublevels; comparing the two landscapes is an
attractive project.

\smallskip\noindent\textbf{Discrete and lattice models.}  The BFACF move set
originates in \cite{BergFoerster,ACF}; \cite{OzawaDiscrete} is the certified
seed-generated computation used in \cref{thm:bfacf}, and
\cite{OzawaFinite,OzawaSwept} are the companion papers on filtered
Reidemeister graphs and on swept-area pseudometrics.  Rawdon
\cite{RawdonApprox} is the standard reference on polygonal approximation of
thickness, and is the analytic input any lattice-to-continuum comparison
will need.

\subsection*{Declaration of generative AI and AI-assisted technologies}

During the preparation and revision of this manuscript the author used
ChatGPT (OpenAI) and Claude (Anthropic) as tools for drafting, restructuring,
language refinement, figure prototyping, and preliminary consistency checks.
All mathematical statements, proofs, citations, and bibliographic data in
the submitted version were reviewed by the author, who takes full
responsibility for the mathematical content and the final manuscript.

\end{document}